\theoremstyle{plain}
\newtheorem{prop}{Proposition}[section]
\newtheorem{lem}[prop]{Lemma}
\newtheorem{thm}[prop]{Theorem}
\newtheorem{remark}[prop]{Remark}
\newtheorem{definition}[prop]{Definition}
\numberwithin{equation}{section}
\newcommand{\N}{{\mathbb N}}
\newcommand{\R}{{\mathbb R}}
\definecolor{blu}{rgb}{0,0,1}
\newcommand{\mE}{\mathcal{E}}
\newcommand{\mP}{\mathcal{P}}
\newcommand{\p}{\psi}
\newcommand{\e}{\varepsilon}
\newcommand{\lan}{\langle}
\newcommand{\ran}{\rangle}
\newcommand{\vertiii}[1]{{\left\vert\kern-0.25ex\left\vert\kern-0.25ex\left\vert #1
    \right\vert\kern-0.25ex\right\vert\kern-0.25ex\right\vert}}
\begin{document}
\title[traveling waves for Gross-Pitaevskii equation]{Finite energy traveling waves for the Gross-Pitaevskii equation in the subsonic regime}

\author{Jacopo Bellazzini}
\address{J. Bellazzini,
\newline  Universit\`a di Sassari, Via Piandanna 4, 70100 Sassari, Italy}%
\email{jbellazzini@uniss.it}%

\author{David Ruiz}
  \address{David Ruiz \\
    Universidad de Granada\\
    Departamento de An\'alisis Matem\'atico\\
    Campus Fuentenueva\\
    18071 Granada, Spain}
  \email{daruiz@ugr.es}

\thanks{J. B. is partially supported by Project 2016 ÒDinamica di equazioni nonlineari dispersiveÓ of FONDAZIONE DI SARDEGNA. D. R. has been supported by the FEDER-MINECO Grants MTM2015-68210-P and PGC2018-096422-B-I00, and by J. Andalucia (FQM-116).}

\begin{abstract} In this paper we study the existence of finite energy traveling waves for the Gross-Pitaevskii equation. This problem has deserved a lot of attention in the literature, but the existence of solutions in the whole subsonic range was a standing open problem till the work of Mari\c s in 2013. However, such result is valid only in dimension 3 and higher. In this paper we first prove the existence of  finite energy traveling waves for almost every value of the speed in the subsonic range. Our argument works identically well in dimensions 2 and 3. 
	
With this result in hand, a compactness argument could fill the range of admissible speeds. We are able to do so in dimension 3, recovering the aforementioned result by Mari\c s. The planar case turns out to be more difficult and the compactness argument works only under an additional assumption on the vortex set of the approximating solutions. 
	
\end{abstract}

\maketitle

\section{Introduction}

In this paper we are concerned with the Gross-Pitaevskii equation

\begin{equation}\label{eq:GP}
i \partial_t \Psi=\Delta \Psi+\Psi\left(1-|\Psi|^2\right)  \text{on } \R^d \times \R
\end{equation}
when $d=2$ or $d=3$. Observe that this is no more than a Nonlinear Schr\"{o}dinger Equation with a Ginzburg-Landau potential. The Gross-Pitaevskii equation was proposed in 1961 (\cite{gross, pita}) to model a quantum system of bosons in a Bose-Einstein condensate, via a Hartree-Fock approximation (see also \cite{b1, b2, jpr1, jpr2}). It appears also in other contexts such as the study of dark solitons in nonlinear optics (\cite{k1, k2}).

From the point of view of the dynamics, the Cauchy problem for the Gross-Pitaevskii equation  was first studied  in one space dimension by Zhidkov \cite{Z} and in dimension $d=2,3$
by B\'{e}thuel and Saut \cite{bs} (see also \cite{ge1,ge2, killip}). At least formally, equation \eqref{eq:GP} presents two invariants, namely:
\begin{itemize}
	\item \emph{Energy:} 
	\[
	\mathcal{E}= \int_{\R^d} \frac 12 |\nabla \Psi|^2 +\frac 14  \left(1-|\Psi|^2\right)^2,
	\]
	\item \emph{Momentum:} 
	\[
	\mathcal{\bf{P}}=\frac 12 \int_{\R^d} \lan i \nabla \Psi, \Psi \ran,
	\]
	where $ \lan f,g \ran=Re(f)Re(g)+Im(f)Im(g)$.
\end{itemize}

This paper is focused on the existence of traveling wave solutions to \eqref{eq:GP}, that is, solutions in the form
\begin{equation}\label{eq:ansatz}
\Psi(x,t)=\psi(x_1-ct, \tilde{x}), \ \ \tilde{x}=(x_2 \dots x_d) \in \R^{d-1},
\end{equation}
where the parameter $c\in \R$ characterizes the speed of the traveling wave. 
Without any lack of generality we will consider $c>0$ throughout the paper. By the ansatz \eqref{eq:ansatz} the equation for the profile $\psi$ is given by
\begin{equation}\label{eq:GPellip}
i c \,\partial_{x_1}\psi +\Delta \psi+\left(1-|\psi|^2\right)\psi=0 \ \ \mbox{in } \R^d.
\end{equation}

%We notice that $\mathcal{P}$ can also be written as
%$$\mathcal{P}:=\int_{\R^2} \left(-u(x)\nabla v(x)+v(x)\nabla u(x) \right) dx$$
%where $u:=Re(\Psi), v:=Im(\Psi)$.

The study of finite energy traveling waves for \eqref{eq:GP} has also implications in the dinamics of the equation. In particular, their pressence is an obstruction to scattering of solutions. Scattering of small energy solutions has been proved in \cite{gnt,gnt2} for $d=3$, and such result is not true in dimension $d=2$. This latter fact may seem surprising for a defocusing Schr\"{o}dinger equation; the reason is that finite energy solutions of \eqref{eq:GP} do not vanish at infinity. 

Nontrivial finite energy traveling waves in dimension $d=1$ are explicitly known, and they are uniquely given (up to rotation or translation) by the expression
$$\psi_c(x)=\sqrt{\frac{2-c^2}{2}} \tanh \left( \frac{\sqrt{2-c^2} }{2}x \right)+i\frac{c}{\sqrt{2}},$$
if $c<\sqrt{2}$. In the literature the function $\psi_0$ is called black soliton whereas $\psi_c$ ($c \neq 0$) receives the name of dark soliton. Their orbital and asymptotic stability has been studied, see \cite{bgss, bgs}.

\medskip 
The problem of finding solutions to \eqref{eq:GPellip} in dimension $d\geq 2$ has a long story. In the pioneer work of Jones, Putterman and Roberts (\cite{jpr1, jpr2}), formal calculations and numerical analysis gave rise to a set of conjectures regarding existence, asymptotic behavior and stability of finite energy travelling waves: the so-called the Jones-Putterman-Roberts program. In particular,  the existence  of finite energy traveling waves is expected if and only if $c \in (0, \sqrt{2})$ (the sub-sonic case). The threshold value $c= \sqrt{2}$ comes from the linearization of the problem around the constant solutions of modulus 1. In a certain sense, those solutions correspond to local minima if $c< \sqrt{2}$.

In the last years much progress has been made to give rigorous proofs of those conjectures. Nontrivial finite energy traveling waves for supersonic speed $c>\sqrt{2}$ do not exist, see \cite{gravejat-CMP}. In dimension $d=2$ this nonexistence result holds also for $c=\sqrt{2}$, see \cite{gravejat-DIA}. For general nonlinearities analogous results have been proved in \cite{Maris-SIAM}.

Concerning the asymptotics of finite energy solutions, for any $d \geq 2$, finite energy solutions of \eqref{eq:GPellip} converge at infinity to a fixed complex number of modulus $1$. By the phase invariance of the problem, we can assume that
\begin{equation} \label{limit1} \psi(x) \to 1 \mbox{ as } |x| \to +\infty. \end{equation}
A more precise asymptotic description of $\psi$ is indeed available, see \cite{gravejat-AIHP, gravejat-Asymp, gravejat-Adv.Diff}.

A very active field of research is the study of the location and dynamics of vortices, namely, the zeroes of the wave function $\psi$. The existence of multi-vortices traveling waves with small speed has been proved in dimension $d=2$, see \cite{ lw, chr2, chr3}. In dimension $3$ there are traveling vortex rings (\cite{lwy}) as well as leapfrogging vortex rings, see \cite{jer2}.

\medskip 

At least formally, the Lagrangian associated to \eqref{eq:GPellip} is defined as:

\begin{equation}\label{eq:ac}
I^c(\psi)=  \mathcal{E}(\psi) - c \mathcal{P}(\psi)= \frac{1}{2} \int_{\R^d} |\nabla \psi|^2  -c\mathcal{P}(\psi)+ \frac 14  \int_{\R^d} \left(1-|\psi|^2\right)^2,
\end{equation}
where $\mathcal{P}$ is the first component of the momentum $\mathcal{\bf P}$ that, under suitable integrability conditions (and taken into account \eqref{limit1}) can be written as:
\begin{equation}\label{eq:MO}
\mathcal{P}(\psi):= -\int_{\R^d}  \partial_{x_1} (Im \Psi)  (Re \Psi-1).
\end{equation}

A classical approach to prove existence of traveling waves (starting from \cite{jpr1, jpr2}) is a minimization procedure
of the energy functional $\mathcal{E}$ under the constraint $\mathcal{P}(\psi)=p$ in a suitable functional space. This approach has been pursued in a number of papers, see for instance \cite{bgs-cmp, bos} for the Gross-Pitaevskii equation and \cite{chr-mar} for more general nonlinearities. A major difficulty in this strategy is to find a natural definition of the momentum for functions with finite energy, since the integrand in \ref{eq:MO} might be non-integrable (see \cite{bos}). This approach has the advantage of providing orbital stability of the solutions found (more precisely, of the set of minimizers). As a drawback, the speed $c$ appears as a Lagrange multiplier and is not under control. In particular the possibility of gaps in the subsonic range of velocity cannot be excluded with the constrained minimization approach (see \cite{bgs-cmp}).

We shall also quote existence results for small values of $c$, see \cite{bs} in dimension 2 and \cite{chr} in dimension 3, but a complete existence result in the sub-sonic case remained for many years as a standing open problem. Finally, Mari\c s proved in \cite{Ma} the existence result for any $c \in (0, \sqrt{2})$ in dimension $d \geq 3$. His approach is, summing up, to minimize $I^c(\psi)$ under a Pohozaev-type constraint. Once this is accomplished, Mari\c s proves that the corresponding Lagrange multiplier is 0, concluding the proof. This approach works also for more general nonlinearities with nonvanishing conditions at infinity, such as the cubic-quintic nonlinearity. As commented in \cite{Ma}, this minimization approach breaks down in dimension 2 because of different scaling properties: the infimum is $0$ and is never attained.

One important tool in Mari\c s' argument is the use of the fiber $t \mapsto u_t$, where $u_t(x_1, \tilde{x}) = u(x_1, t \tilde{x})$. For instance, in dimension $d \geq 4$ all solutions correspond to a maximizer of $I^c$ with respect to that fiber. In dimension $3$, $I^c(u_t)$ is independent of $t$ for any solution: the argument needs to be adapted, but still the use of the fiber is essential. Those cases have an analogy in the study of the Nonlinear Sch\"{o}dinger equation, see \cite{blions}, \cite{bgk}, respectively. However, in dimension $2$ this approach breaks down, and the fiber $u_t$ seems of no use; $I^c(u_t)$ attains a maximum at $t=1$ for any solution $u$.

One of the main motivations of this paper is to deal with the physically relevant $2D$ model where the existence of finite energy traveling waves in the full subsonic range is still an open problem. Our main result is the following:

\begin{thm} \label{teo:almost} There exists a subset $E \subset (0,\sqrt{2})$ of plein measure such that, for any $c \in E$, there exists a nontrivial finite energy solution of \eqref{eq:GPellip} $\psi_c$ such that:
	
	\begin{enumerate}
		\item For any $c_0 \in (0, \sqrt{2})$ there exists $\chi=\chi(c_0)>0$ such that $$0 < 
				I^c(\psi_c) \leq \chi \ \mbox{ for all } c \in E, \ c \geq c_0;$$ 
		\item $ind (\psi_c) \leq 1$, where $ind (\psi_c)$ stands for the Morse index of $\psi_c$, that is,
		
		$$ \sup \{dim\, Y: \ Y \subset C_0^{\infty}(\R^d) \mbox{ vector space, } (I^c)''(\psi_c)(\phi,\phi) <0  \ \forall \, \phi \in Y \} \leq 1.$$
	\end{enumerate}

\end{thm}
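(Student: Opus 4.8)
The natural tool for a statement that produces solutions for \emph{almost every} value of a parameter is the monotonicity trick of Struwe and Jeanjean, and this is what explains the appearance of the full-measure set $E$. I would work in the energy space $\mathcal{X}$ of finite energy maps $\psi$ with $\psi \to 1$ at infinity, on which $I^c = \mathcal{E} - c\mathcal{P}$ is defined through a renormalized momentum $\mathcal{P}$ on a suitable subclass; the speed $c$ is the parameter, entering linearly through $-c\mathcal{P}$. The first task is a mountain-pass geometry for $I^c$, uniform for $c$ in compact subintervals $[c_0,\sqrt{2}-\delta]$. The constant $\psi\equiv 1$ is a strict local minimum with $I^c(1)=0$: in the subsonic regime $c<\sqrt{2}$ the second variation at the constant is positive definite (this is exactly the linearization that produces the threshold $\sqrt{2}$), so $I^c\ge \alpha>0$ on a small sphere around $1$, uniformly in $c$. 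For the far end of the mountain I would exploit the scaling fiber $u_t(x_1,\tilde{x})=u(x_1,t\tilde{x})$ from the introduction: one computes $I^c(u_t)=A\,t^{1-d}+B\,t^{3-d}$, where $B=\frac12\int_{\R^d}|\nabla_{\tilde x}u|^2\ge 0$ and $A=\frac12\int_{\R^d}|\partial_{x_1}u|^2+\frac14\int_{\R^d}(1-|u|^2)^2-c\,\mathcal{P}(u)$. For $d\in\{2,3\}$, fixing a map $u$ with $\mathcal{P}(u)$ large enough that $A<0$ for every $c\ge c_0$ forces $I^c(u_t)\to-\infty$ as $t\to 0^+$; a fixed small $t_0$ then gives a $c$-independent endpoint $\psi_1=u_{t_0}$ with $I^c(\psi_1)<0$. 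This is precisely what makes the argument work identically in dimensions $2$ and $3$.

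With this geometry I would set
$$\chi(c)=\inf_{\gamma\in\Gamma}\ \max_{s\in[0,1]} I^c(\gamma(s)),\qquad \Gamma=\{\gamma\in C([0,1],\mathcal{X}):\gamma(0)=1,\ \gamma(1)=\psi_1\},$$
so that $0<\alpha\le\chi(c)<\infty$. The crucial point is that $c\mapsto\chi(c)$ is non-increasing; since $\mathcal{P}$ is not globally signed this is not the textbook situation (where one assumes $\mathcal{P}\ge 0$), and the care needed is to restrict to near-optimal paths whose maximum is attained where $\mathcal{P}\ge 0$ — heuristically optimal paths avoid the region $\mathcal{P}<0$, where $I^c=\mathcal{E}+c|\mathcal{P}|$ is wastefully large. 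Granting monotonicity, $\chi$ is differentiable at every $c$ of a full-measure set $E$, and at such $c$ the standard comparison of near-optimal paths for nearby parameters yields a \emph{bounded} Palais--Smale sequence $\{\psi_n\}$ for $I^c$ at level $\chi(c)$. Item (1) then follows immediately: the upper bound $I^c(\psi_c)=\chi(c)\le\chi(c_0)$ is just monotonicity, and $\chi(c)\ge\alpha>0$ is the mountain-pass geometry.

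It remains to pass to the limit and to control the Morse index. The bounded Palais--Smale sequence is treated by concentration--compactness in the presence of the translation invariance of \eqref{eq:GPellip}: vanishing is excluded because $\chi(c)>0$, dichotomy is ruled out by an energy-splitting/subadditivity argument, and after suitable translations $\{\psi_n\}$ converges to a nontrivial critical point $\psi_c$ with $I^c(\psi_c)=\chi(c)$, which is the required solution. For item (2), the bound $ind(\psi_c)\le 1$ reflects that $\chi(c)$ is a one-parameter (mountain-pass) min-max level: I would construct the Palais--Smale sequence carrying the additional second-order information that it consists of approximate critical points of index $\le 1$ (working with min-max classes that are stable under deformations in a cone of directions, in the spirit of the index estimates for mountain-pass critical points), and then transfer this property to the weak limit $\psi_c$.

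The main obstacle is the compactness step: the energy space is translation invariant and the momentum is only conditionally defined, so excluding dichotomy and identifying the weak limit as a genuine nontrivial solution — rather than losing mass at infinity — is delicate. A close second is ensuring that the Morse-index-$\le 1$ property, which is clean at the min-max level but fragile under the weak convergence of a merely bounded Palais--Smale sequence, actually survives in the limit. The monotonicity of $\chi(c)$ despite the indefinite sign of $\mathcal{P}$ is the remaining point that needs a genuine argument; the restriction to almost every subsonic speed, by contrast, is intrinsic to the monotonicity trick, coming from the a.e. differentiability of the monotone function $\chi$.
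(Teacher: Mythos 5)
Your skeleton is the right one --- mountain-pass geometry, Struwe--Jeanjean monotonicity giving bounded Palais--Smale sequences for a.e.\ $c$, Fang--Ghoussoub second-order information for the index bound, and transfer of that bound to the limit --- but the two points you yourself flag as delicate are genuine gaps that your plan does not close, and the paper closes them by a different route. First, the functional framework. You work directly on the whole-space energy space, where $\mathcal{P}$ is only conditionally defined; there $I^c$ is not a $C^1$ (let alone uniformly $C^{2,\alpha}$) functional on a Banach space, so the deformation lemma, the construction of bounded Palais--Smale sequences, and the Fang--Ghoussoub machinery you invoke are all unavailable as stated. The paper instead approximates by slabs $\Omega_N=\{|x_1|<N\}$ and works in $1+H_0^1(\Omega_N)$, where the Poincar\'e inequality makes $\mathcal{P}$ well defined and $I^c_N$ smooth. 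This creates a problem your proposal never meets: the monotonicity trick gives, for each fixed $N$, an energy bound that depends on $N$. The paper's key new step is a measure-theoretic diagonal argument --- the sets $D_{N,M}$ of speeds where $\sigma_N$ fails to be differentiable or $|\sigma_N'(c)|>M$ satisfy $|D_{N,M}|\le \chi(c_0)/(c_0 M)$, and intersecting/union-ing appropriately produces, for a.e.\ $c$, a subsequence of slabs $\Omega_{k(N)}$ on which the solutions have energy bounded \emph{uniformly in $N$}. Passing to the limit then further requires a quantitative non-vanishing bound for slab solutions ($\|1-|\psi|\|_{L^\infty(\Omega_N)}\ge \frac{2}{5}(1-c/\sqrt{2})$, proved via lifting identities adapted to $\Omega_N$) and a Liouville-type theorem on the half-space to exclude concentration at $\partial\Omega_{k(N)}$; neither follows from ``$\chi(c)>0$'' alone, which only rules out vanishing of the Palais--Smale sequences at fixed $N$.

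Second, the monotonicity. You correctly observe that $\mathcal{P}$ has no sign and propose restricting to near-optimal paths staying where $\mathcal{P}\ge 0$, admitting this ``needs a genuine argument''; it does, and it is not how the paper proceeds. The paper's device is to exploit the nonnegativity of the \emph{other} piece: since $A=\mathcal{E}\ge 0$, the map $c\mapsto I^c(u)/c$ is decreasing in $c$, hence $\sigma_N(c)=\gamma_N(c)/c$ is decreasing and a.e.\ differentiable, and it is differentiability of $\sigma_N$ (not of $\gamma_N$) that feeds the bounded-PS construction, with $\mathcal{E}(\psi_N)\le -c^2\sigma_N'(c)$. Likewise, the uniform upper bound in item (1) is not obtained from monotonicity of the level but from one explicit path $t\mapsto 1+t\phi_0$ (with $\phi_0$ from Mari\c s's Lemma 4.4), along which the momentum equals $t^2\mathcal{P}(\psi_0)\ge 0$, so $I^c\le I^{c_0}$ on that single path and $\gamma_N(c)\le\chi(c_0)$. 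Finally, note that the positivity $I^c(\psi_c)>0$ in the limit cannot come from the mountain-pass level, which is not preserved when passing from the slabs to $\R^d$; the paper gets it from the Pohozaev-type identity $I(\psi)=\frac{2}{d-1}A(\psi)\ge 0$, with equality only for constants of modulus one. In short: right strategy, correct fiber computation for the endpoint, but the two load-bearing steps --- a valid variational framework with uniform-in-$N$ energy bounds, and the identification of the actual monotone quantity --- are missing or misidentified.
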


The proof deals directly with the Lagrangian $I^c$ and is focused on searching critical points by using min-max arguments. Our proofs use several ingredients: 

\begin{itemize}
	\item Several regularization (or relaxation) techniques have been used in the literature to deal with the Gross-Pitaevskii equation (\cite{bs, Ma}). Alternatively, some authors have proposed an approach by approximating domains, like flat tori, see \cite{ bgs-cmp, bos}. In this paper we choose the second approach, but we use as approximating domains the slabs:
	\begin{equation} \label{omegaN} \Omega_N=\left\{(x_1,\tilde{x})\in \R\times \R^{d-1}, \ \ \ -N<x_1<N\right\}, \ N \in \N. \end{equation}
	
	In other words, we first use a mountain-pass argument to address the question of existence of solutions to the problem:
	\begin{equation}\label{eq:GPstrip}
	\begin{array}{rcr} i c\partial_{x_1}\psi +\Delta \psi+\left(1-|\psi|^2\right)\psi & = & 0  \ \  \text{ on } \Omega_{N}, \\ \psi & = & 1 \text{ on } \partial \Omega_{N}. \end{array} \end{equation}
	The boundary condition is motivated by \eqref{limit1}. This approach has several advantages. First, as $\Omega_N$ is bounded in the $x_1$ direction, Poincar\'{e} inequality holds and we can work on the space $1+H_0^1(\Omega_N)$. As a consequence the momentum given by formula \eqref{eq:MO} is well defined. Secondly, as $\Omega_N$ is invariant along the variable $\tilde{x}$, a Pohozaev type inequality is satisfied without boundary terms (see Lemma \ref{lem:poho2}). This allows us to avoid the problem of unfolding choices of tori, as in \cite{bgs-cmp, bos}. %Third, the spaces $H_0^1(\Omega_N) \subset H_0^1(\Omega_{N'}) $ if $N<N'$, and this implies monotonicity of the related min-max values with respect to $N$.

	\item A second fundamental tool is an energy bound argument via monotonicity in order to control the energy of (PS) sequences for almost all values of $c$. This idea has been used many times in literature starting from \cite{struwe}. The main point here is that we are able to obtain a \emph{uniform bound on the energy for a subsequence of enlarging slabs $\Omega_{k(N)}$}. This is based in a key analytic argument, and it is fundamental in what follows. To the best of our knowledge, this abstract argument is completely new and could be of use in other frameworks where a monotonicity argument is used together with a relaxation procedure.
	
	\item The next step is to pass to the limit, and for that we need to deal with the problem of vanishing. Here we rely on arguments of \cite{bgs-cmp}, and we use in an essential way that $\psi_N$ are solutions of \eqref{eq:GPstrip}. 	
	We can also exclude the concentration of solutions near the boundary of $\Omega_N$, since the problem posed in the half-space
	$$
	\begin{array}{rcl}i c\partial_{x_1}\psi +\Delta \psi+\left(1-|\psi|^2\right)\psi & = & 0  \ \  \text{ on } \R^d_+, \\
	\psi & = & 1 \  \text{ on } \partial \R^d_+, \end{array}
	$$
	does not admit nontrivial solutions. This is another reason for the choice of $\Omega_N$ as approximating domains (see Remark \ref{remark halfspace}).
	
	\item Finally, we use the arguments of \cite{FG} to obtain a Morse index bound of the solutions obtained. Roughly speaking, since our solutions come from a mountain pass argument, their Morse index is at most 1. This will be used in an essential way in the proof of Theorem \ref{teo:all}.

\end{itemize}

With Theorem \ref{teo:almost} in hand, one could ask whether we can pass to the limit and obtain a nontrivial solution for all values of $c \in (0,\sqrt{2})$. This is relatively easy, see Proposition \ref{ascoli}. The problem here is to show that the limit solution has finite energy.  Let us point out that the boundedness of the energy cannot be deduced only by using Pohozaev-type identities, and more delicate arguments are needed. We give two results on this aspect.

The only requirement of the next theorem is $d=3$:

\begin{thm} \label{teo:3} Assume that $d=3$. Let $c \in (0,\sqrt{2})$, $c_n \in E$, $c_n \to c$, where $E$ is the set given by Theorem \ref{teo:almost}. Let $\psi_n$ be the finite energy solutions with speed $c_n$ given by that theorem. Then there exists $\xi_n \in \R^d$ such that:

$$ \psi_n(\cdot - \xi_n) \to \psi \ \mbox{ in } C^k_{loc}(\R^d),$$

where $\psi$ is a nontrivial finite energy solution of \eqref{eq:GPellip} with speed $c$.

\end{thm}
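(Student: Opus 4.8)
The strategy is to obtain the limit profile first and then to settle the only genuinely delicate point, namely that this profile has finite energy. For the former I would invoke Proposition \ref{ascoli} directly: after selecting translations $\xi_n\in\R^3$ that keep the solutions from vanishing, the equation \eqref{eq:GPellip} together with elliptic regularity yields uniform local $C^k$ estimates, so that a subsequence of $\psi_n(\cdot-\xi_n)$ converges in $C^k_{loc}(\R^3)$ to a nontrivial $\psi$ which solves \eqref{eq:GPellip} with speed $c$. Thus everything reduces to proving $\mathcal{E}(\psi)<\infty$.

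Next I would organize the energy bookkeeping specific to $d=3$. Write $A_n=\int_{\R^3}|\partial_{x_1}\psi_n|^2$, $B_n=\int_{\R^3}|\nabla_{\tilde x}\psi_n|^2$ and $V_n=\tfrac14\int_{\R^3}(1-|\psi_n|^2)^2$. Since $c_n\to c>0$ we may assume $c_n\geq c/2=:c_0$, and then Theorem \ref{teo:almost}(1) gives the uniform bound $0<I^{c_n}(\psi_n)\leq\chi(c_0)$. Testing the Pohozaev identities (cf. Lemma \ref{lem:poho2}) against the two fibers $x_1\mapsto sx_1$ and $\tilde x\mapsto t\tilde x$ produces, for every finite energy solution, the relations $A_n=B_n+2V_n$ and $A_n+2V_n=2c_n\mathcal{P}(\psi_n)$. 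These force $I^{c_n}(\psi_n)=\tfrac12 B_n$ and $\mathcal{E}(\psi_n)=A_n=B_n+2V_n$, so the action bound controls only the transverse gradient, $B_n\leq 2\chi(c_0)$, while $2c_n\mathcal{P}(\psi_n)=B_n+4V_n$. The matter therefore collapses to a uniform bound on the potential term $V_n$ (equivalently on the momentum $\mathcal{P}(\psi_n)$), and---exactly as the paper cautions---the Pohozaev relations are now exhausted and cannot close this estimate by themselves.

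To control $V_n$ I would bring in the Morse index. First, $\mathrm{ind}(\psi_n)\leq 1$ passes to the limit by lower semicontinuity, and already the restriction of $(I^{c_n})''(\psi_n)$ to the plane spanned by the scaling directions $x_1\partial_{x_1}\psi_n$ and $\tilde x\cdot\nabla_{\tilde x}\psi_n$ is indefinite, so that each solution carries a negative direction and the index budget is essentially spent on the central profile. I would then run a concentration--compactness analysis for $\psi_n$ after the centering $\xi_n$: vanishing of the potential density is excluded by the arguments of \cite{bgs-cmp}, while a splitting of $\psi_n$ into the central profile plus a second, asymptotically decoupled nontrivial piece escaping to infinity would manufacture two linearly independent directions on which $(I^{c_n})''(\psi_n)$ is negative, contradicting $\mathrm{ind}(\psi_n)\leq 1$. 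Hence no potential energy can escape, the mass of $(1-|\psi_n|^2)^2$ stays in a fixed ball, and $\sup_n V_n<\infty$. A complementary source of control, consistent with $c_n\in E$, is the a.e.\ differentiability in $c$ of the level $c\mapsto I^c(\psi_c)$, whose derivative equals $-\mathcal{P}(\psi_c)$ by an envelope argument and thus bounds the momentum along suitable sequences $c_n\to c$.

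Once $V_n$---and therefore $A_n$ and $\mathcal{E}(\psi_n)=B_n+2V_n$---is uniformly bounded, Fatou's lemma combined with the $C^k_{loc}$ convergence gives $\mathcal{E}(\psi)\leq\liminf_n\mathcal{E}(\psi_n)<\infty$, so $\psi$ is the required nontrivial finite energy traveling wave. The crux, and the step I expect to be hardest, is exactly the uniform bound on $V_n$: ruling out any loss of potential energy to infinity with only a Morse index equal to one at one's disposal is the delicate mechanism that replaces the (insufficient) Pohozaev identities, and reconciling the merely a.e.\ momentum control coming from the monotonicity in $c$ with the prescribed sequence $c_n\to c$ is where the argument must be handled with care.
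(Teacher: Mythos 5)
Your reduction is correct as far as it goes: invoking Proposition \ref{ascoli} for the limit profile, the Pohozaev bookkeeping $A_n=B_n+2V_n$ and $A_n+2V_n=2c_n\mathcal{P}(\psi_n)$ (obtained by combining Lemmas \ref{lem:poho} and \ref{lem:poho2} for $d=3$), the consequence $I^{c_n}(\psi_n)=\tfrac12 B_n=O(1)$, and the observation that everything collapses to a uniform bound on $V_n$ all match the paper's starting point, as does the final Fatou step. The genuine gap is the mechanism you propose for bounding $V_n$. First, your claim that each solution carries a negative direction of $(I^{c_n})''$ along the scaling plane is unproven and is in fact suspect precisely in $d=3$: as the introduction of the paper recalls, for $d=3$ the map $t\mapsto I^c(u_t)$, $u_t(x_1,\tilde x)=u(x_1,t\tilde x)$, is \emph{constant} along solutions, so the transverse scaling direction is degenerate rather than negative. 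Second, and more fundamentally, a sequence of solutions with $\mathcal{E}(\psi_n)\to+\infty$ need not split into finitely many asymptotically decoupled finite-energy bubbles at all; the dangerous scenario is a ``one-dimensional spreading'' in which the solutions resemble $1$D profiles over regions $S_n^r=\{\rho_n<r\}$ of diverging measure, a configuration with no finite bubble decomposition and hence invisible to your index-counting argument. Third, the envelope remark does not help: the monotonicity trick bounds $\mathcal{E}(\psi_{c})$ by a constant $M(c)$ at each fixed $c\in E$, but $M(c)$ is not uniform, and the whole content of Theorem \ref{teo:3} is to control $\mathcal{E}(\psi_{c_n})$ along an arbitrary sequence $c_n\in E$, $c_n\to c$, where those pointwise bounds may blow up. Note also that the paper explicitly remarks (end of Section 7) that Theorem \ref{teo:3} does \emph{not} use the Morse index; the index is needed only for the planar case, Theorem \ref{teo:all}.

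The paper's actual route, which your proposal does not replace, runs as follows: it introduces the densities $g_n,h_n$ of the two $1$D invariants of Gross--Pitaevskii, proves $\|\nabla g_n\|_{L^2}+\|\nabla h_n\|_{L^2}=O(1)$ by a div--curl/Fourier argument (the associated vector fields $G_n$, $H_n$ have divergence and curl controlled by the transverse derivatives, which are $O(1)$ in $L^2$ by $I^{c_n}(\psi_n)=O(1)$), and in $d=3$ upgrades this via Sobolev to $\|g_n\|_{L^6}+\|h_n\|_{L^6}=O(1)$. From this, if $|S_n^r|\to+\infty$ for some $r<c/\sqrt 2$, a translated limit would be a $1$D solution with $g=h=0$, i.e.\ a dark soliton, whose modulus never drops below $c/\sqrt 2$ --- a contradiction that confines the vortex set to finitely many disjoint balls of bounded radii (Proposition \ref{prop:vortices}). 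Since the complement of such balls is simply connected in $\R^3$, a lifting $\psi_n=\rho_n e^{i\theta_n}$ exists outside them; the identities of Lemma \ref{lem:O(1)} together with Lemma \ref{lem:poho} then yield \eqref{eq:stimmomass3c}, i.e.\ $\int|\nabla\rho_n|^2=O(1)$ off the balls, hence $\int(1-\rho_n)^6=O(1)$ by Sobolev, while the coercivity of the lifted Lagrangian on $\{\rho_n\geq r\}$ for $r>c/\sqrt 2$ (Proposition \ref{prop:ma}) shows that $\mathcal{E}(\psi_n)\to+\infty$ would force $|S_n^r|\to+\infty$, contradicting that $L^6$ bound. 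None of these steps --- the $g,h$ invariant bounds, the vortex confinement, the lifting, or the coercivity estimate --- is present in your sketch, so the crucial bound $\sup_n V_n<\infty$ remains unestablished.
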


Observe that Theorems \ref{teo:almost} and \ref{teo:3} give an alternative proof of the result of Mari\c s \cite{Ma} for the Gross-Pitaevskii equation.

\medskip 

Under minor changes, Theorems \ref{teo:almost} and \ref{teo:3} can be adapted to $d \geq 4$: the problem there is the fact that the term $(1-|\psi|^2)^2$ becomes critical or supercritical with respect to the Sobolev embedding. However, since this term has a positive sign in the functional, this issue could be fixed by changing suitably the functional setting, or, alternatively, by using a convenient truncation argument. For the sake of brevity we will not do so and restrict ourselves to the relevant spatial dimensions $d=2$ or $d=3$.

\medskip Regarding compactness of solutions, the case $d=2$ is, again, more involved. It presents analytical difficulties and also topological obstructions, see Remarks \ref{r1}, \ref{r2}. In dimension 2 we are able to conclude only under some assumptions on the vortex set of the solutions:

\begin{thm} \label{teo:all} Take $c \in (0,\ \sqrt{2})$, $c_n \in E$ with $c_n \to c$ and $\psi_n$ the finite energy solutions with speed $c_n$ given by Theorem \ref{teo:almost}. Assume that 
	
\begin{enumerate}
	\item either $\psi_n$ are vortexless, that is, $\psi_n(x) \neq 0 $ for all $x \in \R^d$,
	\item or there exists $R>0$, $\delta>0$ such that:
	
	\begin{equation} \label{control}  \{x \in \R^d: \ \psi_n(x)=0 \} \subset B(0, R) \mbox{ and } |\psi_n(x)| \geq \delta \ \forall \, x \in \partial B(0,R). \end{equation}
	
\end{enumerate}

Then there exists $\xi_n \in \R^d$ such that:

$$ \psi_n(\cdot - \xi_n) \to \psi \ \mbox{ in } C^k_{loc}(\R^d),$$

where $\psi$ is a nontrivial finite energy solution of \eqref{eq:GPellip} with speed $c$.

\end{thm}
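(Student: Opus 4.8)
The overall strategy is a concentration--compactness argument. By Proposition \ref{ascoli} we already know that, after a suitable translation $\xi_n$, the sequence $\psi_n(\cdot-\xi_n)$ converges in $C^k_{loc}(\R^2)$ to some solution $\psi$ of \eqref{eq:GPellip} with speed $c$; the two remaining points are that $\psi$ be \emph{nontrivial} and of \emph{finite energy}. I would isolate these and observe that, by lower semicontinuity of the energy under $C^k_{loc}$ convergence, finite energy of the limit follows once one has a \emph{uniform} bound $\mathcal{E}(\psi_n)\le C$, while nontriviality will follow from centering the translations $\xi_n$ on a region carrying a definite amount of energy. Thus the whole theorem reduces to a single quantitative estimate: a uniform energy bound for the $\psi_n$.

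The first real step is to convert Theorem \ref{teo:almost}(1) into such control. Since the $\psi_n$ are finite energy solutions of \eqref{eq:GPellip} in the whole plane, the Pohozaev identities (the equalities behind Lemma \ref{lem:poho2}, obtained by differentiating $I^{c_n}$ along the anisotropic dilations $\psi(x_1/\lambda,x_2/\mu)$ at $(\lambda,\mu)=(1,1)$) read
\[
\tfrac12\int_{\R^2}|\partial_{x_1}\psi_n|^2=\tfrac12\int_{\R^2}|\partial_{x_2}\psi_n|^2+\tfrac14\int_{\R^2}(1-|\psi_n|^2)^2,\qquad c_n\mathcal{P}(\psi_n)=\tfrac12\int_{\R^2}(1-|\psi_n|^2)^2 .
\]
Combining them yields the clean identity $I^{c_n}(\psi_n)=\int_{\R^2}|\partial_{x_2}\psi_n|^2$, so Theorem \ref{teo:almost}(1) (applicable since $c_n\to c>0$, hence $c_n\ge c_0$ eventually) gives the uniform \emph{transverse} bound $\int_{\R^2}|\partial_{x_2}\psi_n|^2\le\chi$. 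Feeding this back, $\mathcal{E}(\psi_n)=\int_{\R^2}|\partial_{x_2}\psi_n|^2+\tfrac12\int_{\R^2}(1-|\psi_n|^2)^2$, so the entire energy bound — and with it the theorem — is equivalent to a uniform bound on the potential energy $\int_{\R^2}(1-|\psi_n|^2)^2$ (equivalently, on the momentum $\mathcal{P}(\psi_n)$).

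This last estimate is where I expect the real difficulty, and it is exactly the point at which the hypotheses on the vortex set and the Morse index bound must be spent — consistently with the warning that Pohozaev identities alone do not suffice and with the topological obstructions of Remarks \ref{r1}, \ref{r2}. I would split $\int(1-|\psi_n|^2)^2$ over the vortex core $B(0,R)$ and its exterior. On $B(0,R)$ the bound is immediate: $R$ is fixed and $|\psi_n|\le 1$ (the standard $L^\infty$ bound for finite energy subsonic solutions), so the contribution is at most $\pi R^2$. On $\R^2\setminus B(0,R)$ the function $\psi_n$ is vortexless, so I would write $\psi_n=\rho_n e^{i\varphi_n}$ and study $\eta_n=1-|\psi_n|^2$, which solves the screened Poisson equation $-\Delta\eta_n+2\eta_n=2\eta_n^2+2c_n\,\mathrm{Im}(\overline{\psi_n}\,\partial_{x_1}\psi_n)+2|\nabla\psi_n|^2$ derived from \eqref{eq:GPellip}. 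Using the exponentially decaying kernel of $-\Delta+2$, the boundary control $|\psi_n|\ge\delta$ on $\partial B(0,R)$ together with $\psi_n\to1$ at infinity, and the transverse bound $\int|\partial_{x_2}\psi_n|^2\le\chi$, one aims to close an estimate for $\int_{\mathrm{ext}}\eta_n^2$. The role of the Morse index $\le 1$ is to forbid the solution from developing several far-apart regions of large $\eta_n$: each such region would produce an independent negative direction of $(I^{c_n})''(\psi_n)$, so more than one would contradict $\mathrm{ind}(\psi_n)\le1$; this confines the deviation from $1$ to a single lump and is what ultimately caps $\int\eta_n^2$. In case (1) the same scheme runs globally with $R=0$. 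This step is the crux.

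Finally, for nontriviality I would use that the mountain-pass levels $I^{c_n}(\psi_n)$ are bounded below uniformly for $c_n$ near $c$, whence $\int|\partial_{x_2}\psi_n|^2\ge m_0>0$; a Lions-type vanishing lemma then rules out that $\psi_n-1$ spread thinly, so the translations $\xi_n$ can be chosen to keep a fixed quantum of energy in a fixed ball, forcing $\psi\not\equiv1$. Together with the uniform energy bound and Proposition \ref{ascoli}, this yields the nontrivial finite energy limit $\psi$ and completes the proof.
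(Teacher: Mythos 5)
Your reduction is correct and matches the paper's framework: in $d=2$ the two Pohozaev identities (Lemmas \ref{lem:poho}, \ref{lem:poho2}) do give $I^{c_n}(\psi_n)=\int_{\R^2}|\partial_{x_2}\psi_n|^2$ (this is \eqref{eq:A bounded}) and $\mathcal{E}(\psi_n)=\int_{\R^2}|\partial_{x_2}\psi_n|^2+\frac12\int_{\R^2}(1-|\psi_n|^2)^2$, so everything hinges on a uniform bound for the potential energy. But your crux step is a genuine gap, on two counts. First, the screened-Poisson estimate for $\eta_n=1-|\psi_n|^2$ is circular as sketched: the source terms $2|\nabla\psi_n|^2$ and $2c_n\,\mathrm{Im}(\overline{\psi_n}\,\partial_{x_1}\psi_n)$ have $L^1$ norms comparable to $\mathcal{E}(\psi_n)$ and $\mathcal{P}(\psi_n)$, which are exactly the quantities you are trying to bound, so no estimate closes. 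Second, the assertion that ``each far-apart region of large $\eta_n$ produces an independent negative direction of $(I^{c_n})''(\psi_n)$'' is unproven and is not, as stated, true: a mere deviation of $|\psi_n|$ from $1$ does not generate negative directions of the quadratic form \eqref{defQ} (finite energy traveling waves themselves have such regions and index $\leq 1$). Negative directions arise only from a specific limiting structure, and reaching that structure requires machinery you have not supplied.

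The paper's actual route is: assume $\mathcal{E}(\psi_n)\to+\infty$; lift $\psi_n=\rho_n e^{i\theta_n}$ outside $B(0,R)$ (possible by \eqref{control} and degree zero, \cite{gravejat-AIHP}); prove the hydrodynamic identities up to $O(1)$ of Lemma \ref{jopeta} --- in particular $c\mathcal{P}(\psi_n)=\int_{B(0,R)^c}\rho_n^2|\nabla\theta_n|^2+O(1)$ and $\int_{B(0,R)^c}|\nabla\rho_n|^2=O(1)$, which come from multiplying the system \eqref{eq:GPstripvarrp} by $\theta_n$ and $\rho_n^2-1$ and controlling the boundary terms on $\partial B(0,R)$, not from Pohozaev identities; deduce (Proposition \ref{prop:ma2}) that the Lagrangian density is coercive on $\{\rho_n\geq r\}$ for $r>c/\sqrt{2}$, so $|S_n^r|\to+\infty$; run a pigeonhole argument (Lemma \ref{lem:61}) to find balls $B(\xi_n,R_n)$, $R_n\to\infty$, on which $\int|\nabla\rho_n|^2+|\partial_{x_2}\psi_n|^2\to 0$; conclude that the local $C^k$ limit $\psi_0$ is a one-dimensional circular solution $\rho_0 e^{i\omega(x_1-t)}$ with $\rho_0^2<\frac23(1+c^2/4)$ (here is where $r$ is chosen in $\bigl(c/\sqrt2,\sqrt{\frac23(1+c^2/4)}\bigr)$); and finally invoke the Appendix (Proposition \ref{appendix}), where the infinite Morse index of such circular solutions is established via conjugate points of the linearization and disjointly supported test functions, contradicting $ind(\psi_0)\leq 1$, which follows from $ind(\psi_n)\leq 1$ and Remark \ref{remark morse}. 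So you correctly guessed that the index bound is spent through ``many disjoint negative directions,'' but the directions live on the limiting circular solution, not on lumps of $\eta_n$. A minor further point: your closing nontriviality argument needs a uniform lower bound $I^{c_n}(\psi_n)\geq m_0>0$, which Theorem \ref{teo:almost} does not provide; nontriviality is instead already secured by Proposition \ref{ascoli}, via the uniform pointwise bound $\|1-|\psi_n|\|_{L^\infty}\geq\e(c)$ of \cite{bgs-cmp}, which you had in fact cited at the outset.
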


The proofs of both Theorem \ref{teo:3} and Theorem \ref{teo:all} follow similar ideas, which include the following:

\begin{itemize}
	
	\item A fundamental tool is the use of a lifting, that is, the existence of real functions $\rho_n(x)$, $\theta_n(x)$ such that $\psi_n = \rho_n e^{i \theta_n}$. This is always possible if the solutions are vortexless. If the solutions present vortices, one needs some information on the location of the vortex set. In Theorem \ref{teo:3} one can show that the vortices are included in a set of disjoint balls, and that the number of balls and their radius is bounded. Generally speaking, a nonvanishing function $\psi$ admits a lifting if its domain is simply connected. Since the complement of a disjoint union of closed balls is simply connected if $d=3$, we can find a lifting outside those balls. In dimension 2 this is no longer true, though, and we can use a lifting only in the complement of one ball, since the total degree of a finite energy solution is 0 (see \cite{gravejat-AIHP}). 
	
	\item We reason by contradiction assuming that $\mathcal{E}(\psi_n) \to +\infty $. A Pohozaev-type identity implies that $$ \sum_{k=2}^d \int_{\R^d}  |\partial_{x_k} \psi_n|^2 = (d-1) I^{c_n}(\psi_n),$$ and $I^{c_n}(\psi_n)$ is bounded by Theorem \ref{teo:almost}. In our arguments we can pass to a limit (locally) which is a 1-D solution of the Gross-Pitaevskii equation (with finite or infinite energy). The knowledge of those 1-D solutions is essential at this point. For instance, in the proof of Theorem \ref{teo:all} we are able to obtain in the limit a circular solution $\psi(x_1) = \rho_0 e^{i \omega_0 x_1}$, with  $\rho_0^2 < \frac{2}{3} (1+c^2/4)$. But it turns out that such solution has infinite Morse index, and we reach a contradicion.
	
\end{itemize}

Under minor changes, it is possible to adapt the results of this paper to an equation with more general nonlinearities, namely:

$$ i c\partial_{x_1}\psi +\Delta \psi+ F(|\psi|)\psi=0  \ \  \text{ on } \R^d.$$

Several assumptions on the nonlinearity $F$ would be in order. However, for the sake of brevity and clarity, we have preferred to focus on the prototype model of the Gross-Pitaevskii equation in this paper.

The rest of the paper is organized as follows. Section 2 is devoted to the setting of the notation and some preliminary results. In Section 3 we begin the proof of Theorem \ref{teo:almost} by considering problem \eqref{eq:GPstrip} from a variational point of view. A main issue here is that we are not able to show that (PS) sequences have bounded energy. This problem is solved for almost all values of $c$ via the monotonicity trick of Struwe in Section 4. We are able to find sequences of slabs $\Omega_{k(N)}$ for which those solutions have uniformly bounded energy. In Section 5 we pass to the limit avoiding vanishing or concentration on the boundary, concluding the proof of Theorem \ref{teo:almost}. 
Sections 6 and 7 are devoted to the proofs of Theorems \ref{teo:3}, \ref{teo:all}, respectively. The appendix deals with the Morse index computation of the 1-D circular solutions of the Gross-Pitaevskii equation, which is needed in the conclusion of Theorem \ref{teo:all}.

\bigskip

\medskip {\bf Acknowledgements:} The authors wish to thank Rafael Ortega for many discussions on the 1-D solutions of the Gross-Pitaevskii equation, and also for his help in the elaboration of the Appendix.

\section{Preliminaries}

In this section we collect some well-known properties of solutions of the Gross-Pitaevskii equation. We begin by stablishing the notation that we will use throughout the paper.

\medskip

{\bf Notation:} We denote by $\lan z_1, z_2 \ran$ the real scalar product of two elements in $\mathbb{C}$, that is, $\lan z_1, z_2 \ran = Re (z_1 \overline{z_2})$. We denote instead by $\xi_1 \cdot \xi_2$ the real scalar product in $\R^d$, to avoid confusion.

We shall use the letter $\psi$ for complex valued functions, and we will denote its real and imaginary part by $u$ and $v$, respectively, so that $\psi = u + i v$. Moreover, we will write $\rho$ to denote its modulus, that is, $\rho^2 = u^2 + v^2= \lan \psi, \overline{\psi} \ran$.

We denote the partial derivatives by $\partial_{x_1} \psi$, but sometimes we will use $\psi_{x_1}$ for convenience.

\medskip

In next lemma we are concerned with the regularity of solutions and the uniform boundedness of their derivatives.

\begin{lem} \label{lem:bound} Any solution $\psi$ of \eqref{eq:GPellip} or \eqref{eq:GPstrip} is of class $C^{\infty}$ and, for any $k \in \N$, there exists $C_k>0$ such that $\| D^k \psi (x) \| \leq C_k$ for any $x \in \R^d$.

\end{lem}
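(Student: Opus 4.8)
\emph{Proof plan.} The idea is to read \eqref{eq:GPellip} and \eqref{eq:GPstrip} as a semilinear elliptic equation
$\Delta \psi + ic\,\partial_{x_1}\psi = (\rho^2-1)\psi$
with a cubic right-hand side, and to proceed in three stages: local smoothness by a bootstrap, a \emph{global} $L^\infty$ bound, and finally \emph{uniform} bounds on all derivatives via translation-invariant elliptic estimates. For the smoothness, note that the variational solutions live in $1+H_0^1(\Omega_N)$ (resp.\ are finite energy on $\R^d$), so we may start from $\psi\in H^1_{loc}$. In dimensions $d=2,3$ the cubic term is subcritical: the embedding $H^1\hookrightarrow L^6$ (for $d=3$; any $L^p$ for $d=2$) puts the right-hand side in $L^2_{loc}$, whence $\psi\in H^2_{loc}\hookrightarrow L^\infty_{loc}$. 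Once $\psi$ is locally bounded, $g:=(\rho^2-1)\psi$ is locally bounded and a standard Calder\'on--Zygmund/Schauder bootstrap (the constant-coefficient first-order term $ic\,\partial_{x_1}\psi$ causing no difficulty) gives $\psi\in C^\infty$ in the interior; for \eqref{eq:GPstrip} the flat boundary and the constant datum $\psi=1$ give smoothness up to $\partial\Omega_N$.

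The next and main step is a global bound $|\psi|\le\sqrt{1+c^2/4}$. Writing $\rho^2=\lan\psi,\psi\ran$ and using the equation,
\[
\Delta(\rho^2)=2|\nabla\psi|^2-2c\lan i\,\partial_{x_1}\psi,\psi\ran+2\rho^2(\rho^2-1).
\]
The convective cross term is the single feature blocking a naive maximum principle; it is controlled by Cauchy--Schwarz and Young's inequality,
\[
2c\,|\lan i\,\partial_{x_1}\psi,\psi\ran|\le 2c\,\rho\,|\nabla\psi|\le 2|\nabla\psi|^2+\tfrac{c^2}{2}\rho^2,
\]
so that, after absorbing $2|\nabla\psi|^2$, one obtains the clean differential inequality
\[
\Delta(\rho^2)\ \ge\ 2\rho^2\Big(\rho^2-1-\tfrac{c^2}{4}\Big),
\]
valid on the whole domain (including the vortex set, since no lifting is used). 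Setting $M^2:=1+c^2/4$, the function $\rho^2$ is subharmonic on $\{\rho^2>M^2\}$; since finite energy forces $\rho\to1$ at infinity by \eqref{limit1} (resp.\ $\rho=1$ on $\partial\Omega_N$ and $\rho\to1$ as $|\tilde x|\to\infty$ for the slab), the maximum principle shows this set is empty and hence $|\psi|\le M$. Equivalently, one may test the inequality against $(\rho^2-M^2)_+$ and run a Stampacchia argument.

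For the uniform derivative bounds, with $\|\psi\|_{L^\infty}\le M$ the right-hand side $g=(\rho^2-1)\psi$ is globally bounded. On each unit ball $B(x_0,1)$ the interior $L^p$ estimate gives
$\|\psi\|_{W^{2,p}(B(x_0,1/2))}\le C\big(\|\psi\|_{L^p(B(x_0,1))}+\|g\|_{L^p(B(x_0,1))}\big)$
with $C$ \emph{independent of} $x_0$ by translation invariance, and the right-hand side is bounded uniformly in $x_0$ because the data are globally bounded on balls of fixed size. Sobolev embedding yields a uniform $C^{1,\alpha}$ bound; since $g$ is a fixed polynomial in $(\psi,\overline\psi)$ it inherits the same uniform $C^{1,\alpha}$ bound, and Schauder estimates upgrade $\psi$ to a uniform $C^{3,\alpha}$ bound. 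Iterating this alternation produces uniform constants $C_k$ at every order; for \eqref{eq:GPstrip} one adds the boundary Schauder estimates, uniform along the flat translation-invariant boundary $\partial\Omega_N$.

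I expect the crux to be the global $L^\infty$ bound: the first-order term $ic\,\partial_{x_1}\psi$ destroys the sign structure that would make $\rho^2$ subharmonic outright, and the domain is unbounded. The resolution is the completion-of-square estimate above, which restores a genuine differential inequality for $\rho^2$, combined with the asymptotic condition $\rho\to1$ that makes the maximum principle applicable on the slab or on $\R^d$.
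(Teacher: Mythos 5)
Your local bootstrap, your pointwise computation of $\Delta(\rho^2)$, and your final translation-invariant derivative estimates are all sound, and the differential inequality $\Delta(\rho^2)\ \ge\ 2\rho^2\bigl(\rho^2-1-\tfrac{c^2}{4}\bigr)$ is exactly the right starting point. The genuine gap is in how you pass from this inequality to $|\psi|\le\sqrt{1+c^2/4}$. Your maximum-principle step needs $\rho\to 1$ at infinity, and that appeal is illegitimate here for two reasons. First, it is circular: the asymptotics \eqref{limit1} for finite-energy solutions are themselves theorems (Gravejat, B\'ethuel--Saut) whose proofs presuppose precisely the global boundedness and uniform derivative estimates this lemma is establishing; and on the slab, knowing $\psi-1\in H_0^1(\Omega_N)$ does \emph{not} imply $\rho(x)\to 1$ as $|\tilde x|\to\infty$ without the uniform continuity you are trying to prove (an $H^1$ function in dimensions $2,3$ has no pointwise decay without uniform local regularity). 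Second, it loses the stated generality: the lemma concerns \emph{any} solution, with constants that must be uniform over solutions --- this uniformity is what powers the Ascoli--Arzel\`a compactness arguments later, and the Morse-index machinery is applied to limits that can have infinite energy (e.g.\ the circular waves $\rho_0 e^{i\omega x_1}$ of the Appendix, which do not tend to $1$ anywhere). Your Stampacchia variant fails for the same reason: without an a priori $L^\infty$ bound you do not know $\nabla(\rho^2)=2\lan \nabla\psi,\psi\ran\in L^2$, so testing against $(\rho^2-M^2)_+$ on the unbounded domain is not justified.

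The repair --- and this is what the paper does --- is to invoke Farina's theorem \cite{farina}, whose proof runs an Osserman/Keller--Osserman comparison on your very inequality: since the right-hand side $2w(w-M^2)$, $w=\rho^2$, grows superlinearly, one compares $w$ on each ball $B(x_0,R)$ with the maximal (boundary blow-up) solution of the corresponding problem and lets $R\to\infty$, obtaining $\rho^2(x_0)\le 1+c^2/4$ for \emph{every} entire solution, with no hypothesis whatsoever at infinity or on the energy; on the slab the boundary datum $\rho=1\le\sqrt{1+c^2/4}$ is compatible with the comparison, which is the paper's remark that "the boundary condition is compatible with the $L^\infty$ bound." Once this universal $L^\infty$ bound is in hand, your closing paragraph (translation-invariant $W^{2,p}$ and Schauder estimates on unit balls, plus flat-boundary estimates for $\Omega_N$) is exactly the paper's "local elliptic regularity" step and yields constants $C_k=C_k(c,d)$ independent of the particular solution, as the later compactness arguments require.
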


The above result is well-known. The starting point is the $L^\infty$ estimate:

$$ \| \psi \|_{L^\infty} \leq \sqrt{1+c^2/4}.$$

This was proved in \cite{farina} for all entire solutions of \eqref{eq:GPellip} (not only those with finite energy). The argument works equally well for problem \eqref{eq:GPstrip} since the boundary condition is compatible with the $L^\infty$ bound. From this, one can obtain the result via local elliptic regularity estimates.

Indeed the solutions are analytic, see \cite{bgs-cmp}[Theorem 2.1] for more details.

\medskip 

Next lemma gives a Pohozaev identity:

\begin{lem} \label{lem:poho} 

Let $\p$ be a finite energy solution of \eqref{eq:GPellip}. Then:

$$\frac{d-2}{2} \int_{\R^d} |\nabla \psi|^2  -(d-1) c \mathcal{P}(\psi)+ \frac d 4  \int_{\R^d} \left(1-|\psi|^2\right)^2 =0.$$

\end{lem}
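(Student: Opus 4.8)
The guiding principle is a scaling (Derrick--Pohozaev) argument. I would consider the one-parameter family $\p_\lambda(x)=\p(\lambda x)$, $\lambda>0$, which preserves the limit condition $\p_\lambda\to 1$ at infinity. A change of variables shows that each term of the Lagrangian scales by a definite power of $\lambda$,
\begin{equation*}
I^c(\p_\lambda)=\tfrac12\,\lambda^{2-d}\int_{\R^d}|\nabla\p|^2-c\,\lambda^{1-d}\,\mathcal P(\p)+\tfrac14\,\lambda^{-d}\int_{\R^d}\big(1-|\p|^2\big)^2 .
\end{equation*}
Since $\p$ is a critical point of $I^c$ and $\tfrac{d}{d\lambda}\p_\lambda\big|_{\lambda=1}=x\cdot\nabla\p$ is a tangent direction, one expects $\tfrac{d}{d\lambda}I^c(\p_\lambda)\big|_{\lambda=1}=0$; differentiating and evaluating at $\lambda=1$ gives exactly
\begin{equation*}
\tfrac{d-2}{2}\int_{\R^d}|\nabla\p|^2-(d-1)c\,\mathcal P(\p)+\tfrac d4\int_{\R^d}\big(1-|\p|^2\big)^2=0,
\end{equation*}
which is the claimed identity. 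The whole task is to justify this heuristic, since $x\cdot\nabla\p$ is not an admissible test function (it does not decay).

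To make the computation rigorous I would test equation \eqref{eq:GPellip} against $\lan\,\cdot\,,\chi_R(x)\,x\cdot\nabla\p\ran$, where $\chi_R(x)=\chi(x/R)$ with $\chi\in C_0^\infty$, $\chi\equiv 1$ on $B_1$ and $\mathrm{supp}\,\chi\subset B_2$; this is a genuine compactly supported test function, so the weak formulation gives an exact identity for every $R$. Expanding the three terms (freely integrating by parts thanks to the smoothness and uniform bounds of Lemma \ref{lem:bound}) produces the following interior contributions, up to remainders supported in the annulus $\{R\le|x|\le 2R\}$ and carrying a factor $\nabla\chi_R=O(1/R)$:
\begin{itemize}
\item the Laplacian term yields $\tfrac{d-2}{2}\int|\nabla\p|^2$, via the classical identity $\int\lan\Delta\p,x\cdot\nabla\p\ran=\tfrac{d-2}{2}\int|\nabla\p|^2$ applied componentwise to $\mathrm{Re}\,\p$ and $\mathrm{Im}\,\p$;
\item the nonlinear term yields $\tfrac d4\int(1-|\p|^2)^2$, since $\lan(1-|\p|^2)\p,x\cdot\nabla\p\ran=-\tfrac14\,x\cdot\nabla\big((1-|\p|^2)^2\big)$ and $\mathrm{div}\,x=d$;
\item the drift term $c\int\lan i\p_{x_1},x\cdot\nabla\p\ran$ yields $-(d-1)c\,\mathcal P(\p)$. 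The key observation is that the diagonal ($j=1$) part of the integrand $\sum_j x_j\lan i\p_{x_1},\p_{x_j}\ran$ vanishes identically, while for each transverse index $j\ge 2$ an integration by parts, using the antisymmetry $\lan i\p_{x_1},\p_{x_j}\ran=\tfrac12\big[\partial_{x_1}\lan i\p,\p_{x_j}\ran-\partial_{x_j}\lan i\p,\p_{x_1}\ran\big]$, reduces that contribution to $-\mathcal P(\p)$ as given by \eqref{eq:MO}; summing the $d-1$ transverse indices produces the factor $d-1$.
\end{itemize}
Adding the three contributions and recalling that the full tested expression is zero reproduces the Pohozaev identity.

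The main obstacle is the vanishing of the remainder terms as $R\to\infty$. The kinetic and potential remainders are bounded, using $|\nabla\chi_R|\lesssim R^{-1}$ and $|x|\lesssim R$ on the annulus, by $\int_{\{R\le|x|\le2R\}}\big(|\nabla\p|^2+(1-|\p|^2)^2\big)$, which tends to $0$ because these densities are integrable over $\R^d$ (finite energy). The delicate point is the drift remainder together with the boundary terms generated by the integration by parts in its reduction: after the manipulations these are controlled by quantities of the type $\int_{\{R\le|x|\le2R\}}|x|\,|\nabla\p|^2$, which are not bounded by the energy density alone. To close the argument I would invoke the sharp algebraic decay of finite energy solutions at infinity (the asymptotics recalled above, see \cite{gravejat-AIHP}), which gives $\nabla\p=O(|x|^{-d})$ and hence makes $|x|\,|\nabla\p|^2$ integrable on $\R^d$ for $d\ge 2$, so that the corresponding tails vanish. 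This decay input is exactly the step that cannot be replaced by soft arguments, and the dimension $d=2$ is the borderline case where it is most delicate.
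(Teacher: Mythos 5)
Your proposal is correct and is essentially the proof the paper points to: the paper itself gives no argument for Lemma \ref{lem:poho}, it simply cites \cite{gravejat-CMP} and \cite{bgs-cmp}[Lemma 2.5], and those references establish the identity exactly along your lines --- testing against a cutoff multiple of $x\cdot\nabla\psi$, computing the three interior contributions (including the vanishing of the diagonal part of the drift term and the factor $d-1$ from the transverse directions), and killing the annulus remainders with the algebraic decay estimates of \cite{gravejat-AIHP}, recorded in the paper as Lemma \ref{lem:decay}. One point in your sketch deserves care: in the antisymmetrization of the drift term you should write
$\langle i\psi_{x_1},\psi_{x_j}\rangle=\tfrac12\bigl[\partial_{x_1}\langle i(\psi-1),\psi_{x_j}\rangle-\partial_{x_j}\langle i(\psi-1),\psi_{x_1}\rangle\bigr]$,
which is legitimate since the constant shift does not affect the identity (the mixed second-derivative terms cancel). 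With $\psi$ in place of $\psi-1$, as you wrote it, the annulus remainders produced by $x_j\partial_{x_j}\chi_R$ involve $\langle i\psi,\nabla\psi\rangle\approx\nabla v=O(|x|^{-d})$, and $\int_{\{R\le |x|\le 2R\}}|x|^{-d}$ is of constant order, not $o(1)$, so those tails would \emph{not} vanish; with $\psi-1$ they decay like $|x|^{1-2d}$, the annulus integrals are $O(R^{1-d})\to 0$ for $d\ge 2$, and the boundary bookkeeping reduces precisely to the momentum in the form \eqref{eq:MO}. With this adjustment your argument closes exactly as in the cited references, including the borderline case $d=2$ that you correctly flagged as the delicate one.
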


\begin{proof}

See for instance \cite{gravejat-CMP}, or  \cite{bgs-cmp}[Lemma 2.5 and following].

\end{proof}

Next identity is also of Pohozaev-type, but only uses the invariance of the domain by dilations in the $\tilde{x}$ variable:

\begin{lem} \label{lem:poho2}  Let $\p$ be a finite energy solution of either \eqref{eq:GPellip} or \eqref{eq:GPstrip}. Then the following identity holds:
	$$(d-3) A(\psi)+(d-1)B(\psi)=0,$$
	where
	\[
	A(\psi)=\frac 12 \sum_{j=2}^d  \int  |\nabla_{x_j} \psi|^2  
	\]
	and
	\[
	B(\psi)=\frac{1}{2} \int |\partial_{x_1} \psi|^2 + \frac 14  \int \left(1-|\psi|^2\right)^2 -c\mathcal{P}(\psi). 
	\]
	Moreover, by the definition of the Lagrangian \eqref{eq:ac}, we conclude that 
	\begin{equation}\label{eq:A bounded} I(\psi)=\frac{2}{d-1}A(\psi)\geq 0. \end{equation}
	Finally, $I(\psi)=0$ if and only if $\psi$ is a constant function of modulus 1.

\end{lem}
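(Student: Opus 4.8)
The plan is to obtain the identity as a Pohozaev relation generated by the one-parameter family of dilations acting \emph{only} on the transverse variable $\tilde x$, which is exactly the symmetry singled out in the statement. This family leaves both $\R^d$ and the slab $\Omega_N$ invariant, and it is compatible with the boundary datum, which is why the lemma covers \eqref{eq:GPellip} and \eqref{eq:GPstrip} simultaneously. Concretely, for $\lambda>0$ I would set $\psi_\lambda(x_1,\tilde x):=\psi(x_1,\lambda\tilde x)$. In the strip case $\psi_\lambda=1$ on $\partial\Omega_N=\{x_1=\pm N\}$, since the scaling does not touch $x_1$; hence $\psi_\lambda$ is an admissible competitor for every $\lambda$, and this is trivially true on $\R^d$ as well.

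Next I would record how each piece of $I^c$ scales, via the change of variables $\tilde y=\lambda\tilde x$, so that $d\tilde x=\lambda^{-(d-1)}\,d\tilde y$. The $x_1$-derivative term, the potential term, and the momentum $\mathcal{P}$ (which by \eqref{eq:MO} involves a single $\partial_{x_1}$ and no transverse derivatives) all carry exactly one factor $\lambda^{-(d-1)}$, whereas each transverse derivative $\partial_{x_j}\psi_\lambda=\lambda\,(\partial_{x_j}\psi)(x_1,\lambda\tilde x)$ produces an additional $\lambda^2$, giving a factor $\lambda^{3-d}$. Collecting the terms according to the definitions of $A$ and $B$ yields
\[ I^c(\psi_\lambda)=\lambda^{1-d}B(\psi)+\lambda^{3-d}A(\psi). \]
Since $\psi$ is a critical point of $I^c$ and $\lambda\mapsto\psi_\lambda$ is an admissible variation, $\frac{d}{d\lambda}I^c(\psi_\lambda)\big|_{\lambda=1}=0$; differentiating the displayed expression gives $(1-d)B(\psi)+(3-d)A(\psi)=0$, which is precisely $(d-3)A(\psi)+(d-1)B(\psi)=0$.

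For the remaining assertions I would argue purely algebraically. Splitting $\tfrac12\int|\nabla\psi|^2$ into its $x_1$ and $\tilde x$ contributions, the Lagrangian \eqref{eq:ac} reads $I^c(\psi)=A(\psi)+B(\psi)$; substituting $B(\psi)=-\tfrac{d-3}{d-1}A(\psi)$ from the identity gives $I^c(\psi)=\tfrac{2}{d-1}A(\psi)$, and $A(\psi)\ge 0$ because it is a sum of integrals of squares, establishing \eqref{eq:A bounded}. Finally, $I^c(\psi)=0$ forces $A(\psi)=0$, hence $\partial_{x_j}\psi\equiv 0$ for $j=2,\dots,d$, so $\psi=\psi(x_1)$ depends on $x_1$ alone. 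Since $\tilde x$ ranges over all of $\R^{d-1}$, finiteness of $\mathcal{E}(\psi)$ forces the ($\tilde x$-independent) integrand $\tfrac12|\psi'(x_1)|^2+\tfrac14(1-|\psi(x_1)|^2)^2$ to vanish identically, and by the smoothness from Lemma \ref{lem:bound} this means $\psi$ is constant with $|\psi|=1$. The converse is immediate.

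The hard part will be the rigorous justification of $\frac{d}{d\lambda}I^c(\psi_\lambda)\big|_{\lambda=1}=0$: the infinitesimal generator $w=\tilde x\cdot\nabla_{\tilde x}\psi=\sum_{j=2}^d x_j\,\partial_{x_j}\psi$ grows linearly in $|\tilde x|$ and so is not a legitimate test function as it stands. I would handle this by testing the equation against a truncated generator $\chi_R(\tilde x)\,w$, integrating by parts, and letting $R\to+\infty$, the finite-energy hypothesis ensuring that the cutoff error terms vanish in the limit, exactly as in the proof of the classical Pohozaev identity recalled in Lemma \ref{lem:poho} and the references cited there. No boundary contribution arises in the strip case, since $w=0$ on $\partial\Omega_N$, where $\psi\equiv 1$ and hence all its tangential derivatives vanish.
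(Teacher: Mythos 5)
Your proposal is correct and is essentially the paper's own approach: the paper proves this lemma simply by citing the dilation-generated Pohozaev identities of \cite{gravejat-CMP}, \cite{bgs-cmp} and \cite{Maris-SIAM} and observing that the dilations $(x_1,\tilde x)\mapsto(x_1,\lambda\tilde x)$ leave $\Omega_N$ invariant, which is exactly the fiber $\psi_\lambda$ and truncated-multiplier argument you spell out (including the correct observation that $w=\tilde x\cdot\nabla_{\tilde x}\psi$ vanishes on $\partial\Omega_N$, so no boundary terms arise). One small refinement: for \eqref{eq:GPellip} the vanishing of the momentum-type cutoff error terms relies on the decay estimates of Lemma \ref{lem:decay} (which also make $\mathcal{P}(\psi)$ well defined) rather than on finite energy alone, but this is precisely what the references you invoke supply.
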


\begin{proof}
 The case of \eqref{eq:GPellip} has actually been proved in \cite{gravejat-CMP}[Proposition 5], taking into account \cite{bgs-cmp}[Lemma 2.5] (see also\cite{Maris-SIAM}[Proposition 4.1]). The case of the domain $\Omega_N$ is completely analogous and is based on the fact that the dilations $(x_1, \tilde{x}) \mapsto (x_1, \lambda \tilde{x} )$ leave the domain $\Omega_N$ invariant.
\end{proof}

The following decay estimate has been proved in \cite{gravejat-AIHP}:

\begin{lem} \label{lem:decay} Let $\p$ be a finite energy solution of \eqref{eq:GPellip} satisfying \eqref{limit1}. Then the following asymptotics hold:

$$ |v (x) | \leq \frac{K}{1+|x|^{d-1}}, \ \ |u(x)- 1 | \leq \frac{K}{1+|x|^{d}},$$

$$ |\nabla v (x) | \leq \frac{K}{1+|x|^{d}}, \ \ |\nabla u (x) | \leq \frac{K}{1+|x|^{d+1}}.$$

Outside a ball $B(0,R)$ containing all vortices, $\p$ can be lifted as $\psi = \rho e^{i\theta}$. Then the above decay estimates can be written as:

$$ |\theta (x) | \leq \frac{K}{1+|x|^{d-1}}, \ \ | \rho(x) - 1 | \leq \frac{K}{1+|x|^{d}},$$

$$ |\nabla \theta (x) | \leq \frac{K}{1+|x|^{d}}, \ \ |\nabla \rho (x) | \leq \frac{K}{1+|x|^{d+1}}.$$

In particular, the definition \eqref{eq:MO} of the momentum is well defined for any finite energy solution of \eqref{eq:GPellip}.
	
\end{lem}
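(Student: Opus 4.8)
The plan is to establish the decay estimates for $v$ and $u-1$ first, then deduce those for the gradients, and finally transfer everything to the lifted variables $\rho,\theta$. I would start from the asymptotic result already cited from \cite{gravejat-AIHP}, which is really the analytic heart of the matter; so the main task here is to organize its consequences cleanly rather than to reprove the sharp asymptotics from scratch. Writing $\psi = u+iv$ with $\psi \to 1$ at infinity, the linearization of \eqref{eq:GPellip} around the constant $1$ decouples (to leading order) into a Helmholtz-type equation for $v$ and a massive equation for $u-1$. Because the potential term $1-|\psi|^2 = -(u^2-1) - v^2$ contributes a genuine zeroth-order (mass) term to the equation for $u-1$, the real part decays one power faster than the imaginary part: this is the origin of the exponents $|x|^{-d}$ versus $|x|^{-(d-1)}$.

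Concretely, I would first invoke Lemma~\ref{lem:bound} to know that $\psi$ and all its derivatives are bounded, so that $\psi$ is smooth and the nonlinear terms can be controlled. The pointwise bounds $|v(x)|\leq K(1+|x|^{d-1})^{-1}$ and $|u(x)-1|\leq K(1+|x|^d)^{-1}$ are exactly the content of the asymptotic expansions in \cite{gravejat-AIHP}, where the leading term of each component is given explicitly in terms of the momentum and decays at precisely these rates. For the gradient estimates I would use interior elliptic (Schauder or $W^{2,p}$) estimates on balls $B(x,1)$ of unit radius centered at points $x$ with $|x|$ large: on such a ball the equation has bounded smooth coefficients, so $\|\nabla\psi\|_{L^\infty(B(x,1/2))}$ is controlled by $\|\psi - 1\|_{L^\infty(B(x,1))}$ plus lower-order terms, which yields one extra power of decay for each derivative. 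Iterating gives $|\nabla v|\leq K(1+|x|^d)^{-1}$ and $|\nabla u|\leq K(1+|x|^{d+1})^{-1}$. This rescaled-elliptic-estimate step is routine but is where the gain of a power of $|x|$ in the gradient is actually produced.

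For the lifting, outside a ball $B(0,R)$ containing all the vortices we have $\rho = |\psi| \geq 1/2$ (say), so $\psi = \rho e^{i\theta}$ with $\rho,\theta$ smooth. Since $\psi\to 1$, we have $\rho\to 1$ and $\theta\to 0$, and to leading order $\theta \approx v$ and $\rho - 1 \approx u-1$, with the correction terms being quadratic and hence of strictly higher decay order; thus the estimates for $\theta$ and $\rho - 1$ inherit the rates of $v$ and $u-1$, and similarly for their gradients after differentiating the relations $v = \rho\sin\theta$, $u = \rho\cos\theta$. Finally, the momentum \eqref{eq:MO} is well defined because its integrand $\partial_{x_1}(Im\,\psi)(Re\,\psi - 1)$ is bounded by $|\nabla v|\,|u-1| \leq K(1+|x|)^{-(2d+1)}$, which is integrable over $\R^d$ for $d\geq 1$; more carefully one uses $|v|\,|\nabla u|$ or an integration-by-parts form to avoid subtleties, but in all cases the product of a factor decaying like $|x|^{-(d-1)}$ or faster with one decaying like $|x|^{-d}$ or faster beats $|x|^{-(2d-1)}$, giving integrability. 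The main obstacle is simply that the sharp asymptotics are imported as a black box from \cite{gravejat-AIHP}; once granted, the remaining work is the elliptic bootstrap for gradients and the elementary algebra of the lifting.
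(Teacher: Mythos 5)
The paper does not prove this lemma at all: all four decay estimates, \emph{including the gradient estimates}, are imported directly from \cite{gravejat-AIHP}, and the lifting statement together with the integrability of the momentum is a routine consequence noted there as well. Your plan of importing only the zeroth-order bounds on $v$ and $u-1$ and then generating the gradient decay yourself is therefore a genuinely different route, but it contains a real gap: interior elliptic (Schauder or $W^{2,p}$) estimates on \emph{unit} balls $B(x,1)$ do not produce ``one extra power of decay for each derivative''. They give $\|\nabla \psi\|_{L^\infty(B(x,1/2))}\leq C\left(\|\psi-1\|_{L^\infty(B(x,1))}+\|f\|\right)$ with $f$ the right-hand side, i.e.\ they \emph{propagate} the rate $|x|^{-(d-1)}$ to $\nabla v$ but cannot improve it to $|x|^{-d}$. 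The scaling mechanism that does yield a gain for harmonic functions (estimates on balls of radius comparable to $|x|$, rescaled to unit size) fails here because \eqref{eq:GPellip} has a fixed length scale: under the rescaling the first-order term $ic\,\partial_{x_1}\psi$ and the zeroth-order term $(1-|\psi|^2)\psi$ pick up factors of the radius $R\sim|x|$, so the constants blow up. The extra power for $\nabla v$ and $\nabla u$ in the lemma reflects the anisotropic kernel analysis of the linearized operator in \cite{gravejat-AIHP} (the leading terms are explicit homogeneous profiles whose gradients decay one power faster, with remainders controlled in $C^1$); it is not a consequence of local elliptic regularity, so this step of your argument would fail as written. The fix is simply to cite the derivative estimates of \cite{gravejat-AIHP} as well, which is what the paper does.

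The rest of your plan is sound and matches what is implicit in the paper: outside a ball containing the vortices one has $\rho\geq 1/2$, the lifting exists with $\theta\to 0$, and the relations $v=\rho\sin\theta$, $u=\rho\cos\theta$ transfer the rates (for $\rho-1=\sqrt{u^2+v^2}-1$ one needs $v^2\lesssim |x|^{-2(d-1)}$ to be no worse than $|x|^{-d}$, which holds precisely for $d\geq 2$, the regime of the paper). One small arithmetic slip: the momentum integrand $(\partial_{x_1} v)(u-1)$ is $O\!\left(|x|^{-2d}\right)$, not $O\!\left(|x|^{-(2d+1)}\right)$; this does not affect integrability, since $2d>d$.
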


We now define the Morse index of a solution of \eqref{eq:GPellip}:

\begin{definition} \label{Morse} Let $\psi$ be a solution of \eqref{eq:GPellip} (either with finite or infinite energy). We define its Morse index $ind(\psi)$ as:
	
	$$ \sup \{dim\, Y: \ Y \subset C_0^{\infty}(\R^d) \mbox{ vector space,  } Q(\phi)<0 \ \forall \, \phi \in Y \},$$
where
\begin{equation} \label{defQ} Q(\phi)=\int_{\R^d} |\nabla \phi|^2  - c \lan \phi, i \partial_{x_1} \phi \ran - (1 - |\psi|^2) |\phi|^2 +2 \lan \phi, \psi \ran^2. \end{equation}

If that set is not bounded from above, we will say that its Morse index is $+\infty$.

\medskip Observe that, at least formally, $Q(\phi) = I_c''(\psi)[\phi, \phi]$, and hence the Morse index is nothing but the maximal dimension for which $I_c''(\psi)$ is negative definite.

\begin{remark} \label{remark morse} An useful property of the so-defined Morse index is that it is decreasing under convergence in compact sets. Being more specific, assume that $\psi_n$ is a sequence of solutions of \eqref{eq:GPellip} or \eqref{eq:GPstrip}. Assume also that $ind(\psi_n) \leq m$ and $\psi_n$ converges to $\psi_0$ in $C^1_{loc}$ sense. Then $ind (\psi_0) \leq m$. 
	
This property will be essential, in particular, in the proof of Theorem \ref{teo:all}.
\end{remark}

\end{definition}

\section{The variational approach of Problem \eqref{eq:GPstrip}}

We first recall the definition of $\Omega_N$ \eqref{omegaN} and observe that in the Sobolev Space  $H_0^1(\Omega_N)$ the Poincar\'{e} inequality holds:
\begin{equation} \label{eq:poincare}
\int_{\Omega_N} |\phi|^2  \leq C_N \int_{\Omega_N} |\nabla \phi|^2 \ \ \forall \ \phi \in H_0^1(\Omega_N).
\end{equation}

If we combine this with the Sobolev inequality we obtain that

\begin{equation} \label{sob}
\| \phi \|_{L^p} \leq C_N \| \nabla \phi \|_{L^2}, \ \  \left \{ \begin{array}{ll}  p\in [2, 6] & \mbox{if } d=3, \\ \\ p \geq 2 & \mbox{if } d=2. \end{array}\right.
\end{equation} 

Let us define the action functional $I^c_N$ as the Lagrangian $I^c$ defined in \eqref{eq:ac} restricted to the affine space $ 1+H_0^1(\Omega_N)$, that is,
$$
I^c_N(\psi):= \mathcal{E}(\psi) - c \mathcal{P}(\psi)= \frac{1}{2} \int_{\Omega_N} |\nabla \psi|^2  -c\mathcal{P}(\psi)+ \frac 14  \int_{\Omega_N} \left(1-|\psi|^2\right)^2 
$$

%Hereafter we denote $\tilde H_0^1(\Omega_N)= 1+H_0^1(\Omega_N)$ and
%$$||\psi||_{\tilde H^1(\Omega_N)}=||\psi-1||_{ H^1_0(\Omega_N)}.$$ 

We notice that thanks to the identities
$$\mathcal{P}(u+i v)=- \int_{\Omega_N} (u(x)-1)\partial_{x_1} v(x),$$
$$(1-u^2-v^2)^2=(2(u-1)+(u-1)^2+v^2)^2$$
%and the interpolation inequalities
%\begin{equation}\label{eq:interp}
%\left\{ \begin{aligned}
%||u-1||_{L^3(\Omega_N)}^3 \leq C ||u-1||_{L^2(\Omega_N)}^2||\nabla u||_{L^2(\Omega_N)}\leq C (4N^2) ||\nabla u||_{L^2(\Omega_N)}^3 \ \text{ if } d=2 \\
%||u-1||_{L^4(\Omega_N)}^4 \leq C ||u-1||_{L^2(\Omega_N)}^2||\nabla u||_{L^2(\Omega_N)}^2 \leq C (4N^2) ||\nabla u||_{L^2(\Omega_N)}^4  \ \text{ if } d=2\\
%||u-1||_{L^3(\Omega_N)}^3 \leq C ||u-1||_{L^2(\Omega_N)}^{\frac 32}||\nabla u||_{L^2(\Omega_N)}^{\frac 32 }\leq C (2N)^{\frac 32} ||\nabla u||_{L^2(\Omega_N)}^4    \ \text{ if } d=3\\
%||u-1||_{L^4(\Omega_N)}^4 \leq C ||u-1||_{L^2(\Omega_N)}||\nabla u||_{L^2(\Omega_N)}^3 \leq C (2N) ||\nabla u||_{L^2(\Omega_N)}^4   \ \text{ if } d=3
%\end{aligned}\right..
%\end{equation}
the action functional $I^c_N$ is $C^2$ in $H^1_0(\Omega_N)$. Our aim is to prove the existence of critical points of the action functional where the velocity parameter
if fixed; these critical points correspond to solution to \eqref{eq:GPstrip}. Let us point out that $H_0^1(\Omega_N)$ is included in $H_0^1(\Omega_{N'})$ if $N'>N$ (up to extension by $0$).

Our strategy is to prove that $I^c_N$ has a mountain pass geometry on $1+ H_0^1(\Omega_N)$.  More precisely we aim to prove that 
\begin{equation}\label{gamma}
\gamma_N(c):= \inf_{g \in \Gamma} \max_{t\in [0,1]}I^c_N(g(t)) > 0,
\end{equation}
where
\begin{equation}\label{Gamma}
\Gamma(N) =\{g \in C([0,1], (1+H_0^1(\Omega_N)): \ g(0)=1, \ g(1)=\psi_0\},
\end{equation}
where $\psi_0$ is chosen so that $I^c_N(\psi_0)<0$.
\begin{prop} \label{min-max}
Given any $c_0 \in (0, \sqrt{2})$, there exist $N_0>0$, $\psi_0 \in 1 + H_0^1(\Omega_{N_0})$ and $\chi(c_0)>0$ such that $\forall N \geq N_0$, $c \in [c_0, \sqrt{2})$:

\begin{enumerate}
	\item[a)] $I^c_N(\psi_0)<0$.
	\item[b)] $0 < \gamma_N(c) \leq \chi(c_0)$.
\end{enumerate}

\end{prop}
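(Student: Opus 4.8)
The plan is to establish the two standard ingredients of a mountain‑pass geometry: a strictly positive local barrier around the constant function $1$, and a descent endpoint $\psi_0$ lying beyond that barrier. Writing $\psi = 1+\phi$ with $\phi = a+ib \in H_0^1(\Omega_N)$ (so $a=u-1$, $b=v$), I would first expand
\[ I^c_N(1+\phi) = Q_c(\phi) + \int_{\Omega_N} a\,(a^2+b^2) + \frac14\int_{\Omega_N}(a^2+b^2)^2, \]
where $Q_c(\phi) = \frac12\int_{\Omega_N}|\nabla\phi|^2 + c\int_{\Omega_N} a\,\partial_{x_1}b + \int_{\Omega_N} a^2$. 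The heart of the local analysis is the coercivity of $Q_c$. Estimating the cross term by Young's inequality, $|c\int a\,\partial_{x_1}b| \le \frac{c\mu}{2}\int a^2 + \frac{c}{2\mu}\int|\nabla b|^2$, and choosing $\mu=\sqrt2$ yields $Q_c(\phi)\ge \frac12\big(1-\tfrac{c}{\sqrt2}\big)\int_{\Omega_N}|\nabla\phi|^2$, a bound that is positive precisely because $c<\sqrt2$ and, crucially, is \emph{independent of} $N$. The subsonic threshold enters here: the admissible window $\mu\in(c,2/c)$ is nonempty exactly when $c^2<2$. The cubic and quartic remainders are controlled, via the Sobolev--Poincaré inequality \eqref{sob}, by $C_N\|\nabla\phi\|_{L^2}^3$ (the quartic term being nonnegative, I simply discard it), so that $I^c_N(1+\phi)\ge \frac12(1-c/\sqrt2)\,\|\nabla\phi\|_{L^2}^2 - C_N\|\nabla\phi\|_{L^2}^3$. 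Hence there is a radius $r=r(c,N)>0$ with $I^c_N\ge 0$ on the ball $\{\|\nabla\phi\|_{L^2}\le r\}$ and $I^c_N\ge\alpha>0$ on the sphere $\{\|\nabla\phi\|_{L^2}=r\}$.

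\emph{The main obstacle is the construction of $\psi_0$.} I would look for $\psi_0\in 1+C_c^\infty(\R^d)$ with $\mathcal{P}(\psi_0)>0$ and $\mathcal{E}(\psi_0)<c_0\,\mathcal{P}(\psi_0)$, i.e. $I^{c_0}(\psi_0)<0$. The existence of trial states whose energy‑to‑momentum ratio $\mathcal{E}/\mathcal{P}$ is below any prescribed $c_0\in(0,\sqrt2)$ is the delicate, dimension‑sensitive point: near $1$ the coercivity above forces the ratio to be at least the sound speed $\sqrt2$, so genuinely large‑amplitude configurations are required. In $d=2,3$ this is achieved by widely separated vortex configurations — a vortex dipole for $d=2$, a vortex ring for $d=3$ — suitably cut off to have compact support: their momentum grows like the square of the scale while the energy grows only logarithmically, so that $\mathcal{E}/\mathcal{P}\to0$. (Alternatively one may use an explicit anisotropic rescaling $\phi(x_1,\tilde x)\mapsto\phi(x_1/\ell,\tilde x/L)$ and optimize in $\ell,L$.) Fixing $N_0$ so that $\mathrm{supp}(\psi_0-1)\subset\Omega_{N_0}$, the value $I^c_N(\psi_0)$ is independent of $N\ge N_0$; and since $I^c_N(\psi_0)=I^{c_0}_{N_0}(\psi_0)-(c-c_0)\mathcal{P}(\psi_0)<0$ for every $c\ge c_0$, part (a) holds on the whole range $c\in[c_0,\sqrt2)$.

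For the lower bound in (b) I note that $I^c_N\ge 0$ on the ball while $I^c_N(\psi_0)<0$, so $\psi_0$ lies outside the ball. Every $g\in\Gamma(N)$ then joins $g(0)=1$ (inside the ball) to $g(1)=\psi_0$ (outside), and by continuity of $t\mapsto\|\nabla(g(t)-1)\|_{L^2}$ and the intermediate value theorem it must meet the sphere, where $I^c_N\ge\alpha$; hence $\gamma_N(c)\ge\alpha>0$. For the upper bound I would test with the straight segment $g(t)=1+t(\psi_0-1)$, which belongs to $\Gamma(N)$ for every $N\ge N_0$. Since $\mathcal{P}$ is quadratic along this segment, $\mathcal{P}(1+t(\psi_0-1))=t^2\mathcal{P}(\psi_0)\ge0$, the map $c\mapsto I^c_N(g(t))$ is nonincreasing, so $I^c_N(g(t))\le I^{c_0}_{N_0}(g(t))$ for all $t$ and all $N\ge N_0$. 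Therefore
\[ \gamma_N(c)\le \max_{t\in[0,1]} I^{c_0}_{N_0}\big(1+t(\psi_0-1)\big) =: \chi(c_0), \]
a finite constant depending only on $c_0$. This gives $0<\gamma_N(c)\le\chi(c_0)$ uniformly in $N\ge N_0$ and $c\in[c_0,\sqrt2)$.

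A final remark on uniformity: the barrier height $\alpha$ and radius $r$ depend on both $c$ and $N$ and may degenerate as $c\to\sqrt2$ or $N\to\infty$ (the cubic constant $C_N$ blows up with $N$). This is harmless, since (b) only claims strict positivity of $\gamma_N(c)$ for each fixed $c$ and $N$; the genuinely uniform estimate is the upper bound $\chi(c_0)$, which is uniform precisely because it is produced by a single fixed path supported in $\Omega_{N_0}$ together with the monotonicity of $c\mapsto I^c_N$ along that path.
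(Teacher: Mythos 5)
Your proposal is correct and follows essentially the same route as the paper's proof: a quantitative expansion of $I^c_N$ around $\psi=1$ with the momentum cross term absorbed by Young's inequality (positivity of the quadratic part being exactly the subsonic condition $c<\sqrt{2}$, with an $N$-independent coercivity constant), the cubic remainder controlled via \eqref{sob}, a compactly supported endpoint with $I^{c_0}(\psi_0)<0$, and then the straight segment $1+t(\psi_0-1)$ together with $\mathcal{P}(\psi_0)>0$ and the quadratic scaling $\mathcal{P}(1+t(\psi_0-1))=t^2\mathcal{P}(\psi_0)$ to obtain the bound $\chi(c_0)$ uniformly in $N\geq N_0$ and $c\in[c_0,\sqrt{2})$. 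The only divergence is that the paper obtains $\psi_0$ by citing Lemma 4.4 of Mari\c s \cite{Ma}, where you sketch the underlying vortex-dipole/vortex-ring construction (with slightly inaccurate growth rates --- in $d=2$ the momentum of a dipole grows linearly, not quadratically, in the separation, and your parenthetical anisotropic-rescaling alternative would still require a trial state satisfying a nontrivial inequality --- but the essential claim $\mathcal{E}/\mathcal{P}\to 0$, hence $I^{c_0}(\psi_0)<0$ for some compactly supported $\psi_0$, is correct and is precisely what the cited lemma provides).
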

\begin{proof}

We can write the action functional $I^c_N(\psi)$, as:
$$ I^c_N(\psi) = \int_{\Omega_N} \frac{1}{2} |\nabla u|^2 + \frac{1}{2}  |\nabla v|^2 -c (1-u)\partial_{x_1} v +\frac 1 4  (2(u-1)+(u-1)^2+v^2)^2.
$$
Moreover we have the elementary inequality $c x y\leq \frac{c^2}{4}x^2+y^2$, so that
 
\begin{eqnarray*} 
I^c_N(\psi)  \geq  \displaystyle \int_{\Omega_N} \frac{1}{2}  |\nabla u|^2 + \left(\frac{1}{2} -\frac{c^2}{4} \right)  |\nabla v|^2 - (u-1)^2 + \frac{(2(u-1)+(u-1)^2+v^2)^2}{4} \\ 
\geq \displaystyle\int_{\Omega_N} \frac{1}{2}  |\nabla u|^2 + \left(\frac{1}{2} -\frac{c^2}{4} \right)  |\nabla v|^2 - |u-1|^3 - |u-1|v^2. \qquad \qquad \qquad \quad \ \ \ 
\end{eqnarray*}
By using Holder inequality and \eqref{sob}, we obtain:
$$
I^c_N(\psi) \geq  \left(\frac{1}{2} -\frac{c^2}{4} \right) ||\psi-1||_{H^1_0(\Omega_N)}^2 - K||\psi-1||_{H^1_0(\Omega_N)}^3,
$$
and hence $\psi=1$ is a local minimum of the action functional whenever $c^2<2$.

In \cite{Ma}, Lemma 4.4, a compactly supported function $\phi_0$ is found so that $I^{c_0}(1+\phi_0)<0$. So it suffices to take sufficiently large $N_0$ such that $\Omega_N \supset supp \, \psi_0$, to obtain a).

Finally, define $\gamma_0(t) = 1 + t \phi_0$, which obviously belongs to $\Gamma(N)$ for all $N \geq N_0$. Observe that:

$$ I_N^c(\gamma_0(t)) = \mathcal{E}(\gamma_0(t)) - c \, t^2 \mathcal{P}(\psi_0).$$

As commented above $I_N^c(\psi_0)<0$, which implies that $\mathcal{P}(\psi_0)>0$. Hence, for all $c \geq c_0$,

$$ I_{N}^c(\gamma_0(t)) \leq I_N^{c_0}(\gamma_0(t)) \leq \max_{t \in [0,1]} I_{N_0}^{c_0} \circ \gamma_0(t)=\chi(c_0),$$
by definition. As a consequence, $\gamma_N(c) \leq \chi(c_0)$ for all $N \geq N_0$, $c \geq c_0$.
\end{proof}

It is standard (see for instance \cite{AM, willem}) that the mountain pass geometry induces the existence of a Palais-Smale sequence at the level $\gamma_N$. Namely, a sequence $\psi_n$ such that
$$I_N^c(\psi_n)=\gamma_N(c)+o(1), \ \ \ ||(I_N^c)'(\psi_n)||_{H^{-1}_0(\Omega_N)}=o(1).$$

It is not clear if such Palais-Smale sequences are bounded or not; this is one of the main difficulties. The question of the existence of Palais-Smale sequences with bounded energy for almost all values of $c$ will be addressed in next section. In what follows we show that, if bounded, such sequences give rise to critical points of $I^c_N$. 
\begin{lem}\label{eq:novan}
Let $d=2,3$ and $\left\{Q_j\right\}$ be the set of disjoint unitary cubes that covers $\Omega_N$. If $\psi_n=u_n+ i v_n$ is a bounded vanishing sequence in $1+ H^1_0(\Omega_n)$, i.e. such that
$$\sup_j \int_{Q_j} |u_n-1|^p+|v_n|^p  \rightarrow 0$$
for some $2\leq p<\infty$ if $d=2$, $2\leq p<6$ if $d=3$, then
$$\int_{\Omega_N} |u_n-1|^r+|v_n|^r  \rightarrow 0$$
for any $2< r<\infty$ if $d=2$, $2<r<6$ if $d=3.$
\end{lem}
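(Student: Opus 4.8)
The plan is to prove this vanishing lemma by interpolating between the $L^p$ control on unit cubes and a uniform $H^1$ (hence Sobolev) bound, summing the resulting estimate over the covering $\{Q_j\}$. This is a standard ``vanishing implies decay'' argument in the spirit of Lions' concentration-compactness, adapted to the fact that here the natural quantities are $u_n-1$ and $v_n$, which lie in $H_0^1(\Omega_N)$.

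First I would fix attention on a single cube $Q_j$ and on, say, the function $w_n = v_n$ (the argument for $u_n - 1$ is identical). On each unit cube $Q_j$ the Sobolev embedding gives, with a constant independent of $j$ by translation invariance of the unit cube,
\[
\|w_n\|_{L^{q}(Q_j)} \leq C \, \|w_n\|_{H^1(Q_j)},
\]
where $q$ may be taken as any exponent in $[2,\infty)$ if $d=2$ and $q=6$ if $d=3$. Next, for a target exponent $r$ strictly between $p$ and $q$, I would interpolate:
\[
\|w_n\|_{L^r(Q_j)} \leq \|w_n\|_{L^p(Q_j)}^{1-\lambda}\,\|w_n\|_{L^{q}(Q_j)}^{\lambda},
\]
for the appropriate $\lambda \in (0,1)$ determined by $\tfrac1r = \tfrac{1-\lambda}{p} + \tfrac{\lambda}{q}$. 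Raising to the $r$-th power and inserting the Sobolev estimate yields
\[
\int_{Q_j} |w_n|^r \leq C\,\Big(\int_{Q_j}|w_n|^p\Big)^{\frac{(1-\lambda)r}{p}} \|w_n\|_{H^1(Q_j)}^{\lambda r}.
\]

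The key point is to arrange the exponents so that the cube-wise bound can be summed. I would choose $r$ so that $\lambda r = 2$, i.e. so the Sobolev factor appears exactly to the power of the $H^1$-norm squared, which is additive over the disjoint cubes. Bounding the first factor by the supremum over $j$ and extracting one such factor out of the sum, I would obtain
\[
\int_{\Omega_N} |w_n|^r = \sum_j \int_{Q_j}|w_n|^r
\leq C\,\Big(\sup_j \int_{Q_j}|w_n|^p\Big)^{\frac{(1-\lambda)r}{p}} \sum_j \|w_n\|_{H^1(Q_j)}^2
= C\,\Big(\sup_j \int_{Q_j}|w_n|^p\Big)^{\frac{(1-\lambda)r}{p}}\|w_n\|_{H^1(\Omega_N)}^2.
\]
Since $\{\psi_n\}$ is bounded in $1+H_0^1(\Omega_N)$, the last factor is bounded uniformly in $n$, while the vanishing hypothesis forces the supremum factor to zero; hence the whole expression tends to $0$. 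Applying the same to $u_n - 1$ and adding gives the claim, at least for those $r$ of the special form $r = 2/\lambda$. For a general $r$ in the open range, I would either note that any such $r$ is covered by a suitable choice of the auxiliary Sobolev exponent $q$ (this is why $q$ is free to range over $[2,\infty)$ when $d=2$), or interpolate once more between two admissible exponents.

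The main obstacle I anticipate is bookkeeping of exponents rather than anything deep: one must check that for every prescribed $r$ in the stated open interval there is an admissible pair $(p,q)$ with $p < r < q$ and with $q$ inside the Sobolev range, so that $\lambda \in (0,1)$ and the summation trick (Sobolev factor to the power $2$) goes through. In $d=2$ this is immediate since $q$ can be taken arbitrarily large; in $d=3$ one must verify that the strict upper bound $r<6$ leaves room to pick $q=6$ and $\lambda r \le 2$ simultaneously, which is exactly why the statement excludes the endpoint $r=6$. A minor technical point is the uniformity of the Sobolev constant across the cubes $Q_j$, which follows from their being congruent translates of a fixed unit cube.
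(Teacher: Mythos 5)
Your proof is correct and is exactly the standard argument behind Lions' vanishing lemma (Lemma I.1 of \cite{lions2}), which is all the paper does here --- it simply cites that lemma without reproducing the proof. One small caveat: with $p$ fixed, the special exponents satisfy $r = 2 + p\,\frac{q-2}{q} < 2+p$ no matter how large $q$ is taken, so in $d=2$ the ``choose $q$ large'' alternative alone does not reach every $r \in (2,\infty)$; it is your second alternative --- a further global interpolation of $L^r$ between the exponent already obtained and the uniformly bounded $L^s$ norms coming from \eqref{sob} --- that completes the full stated range in both dimensions.
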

\begin{proof}
The proof is standard, see e.g \cite{lions2} Lemma I.1.
%and we show it only for reader convenience. Let us consider for simplicity the case $N=2$ and let $p<s.$ Now we can choose $q>s$ such that $q(s-p)>2(q-p)$.
%By interpolation we have
%$$\int_{\Omega_N} |u_n-1|^s dx=\sum_j\left( \int_{Q_j} |u_n-1|^s dx\right)\leq $$
%$$\leq  \sum_j  ||u_n-1||_{L^p(Q_j)}^{s\alpha}||u_n-1||_{L^q(Q_j)}^{s(1-\alpha)}\leq \left(\sup_j ||u_n-1||_{L^p(Q_j)}^{s\alpha} \right)\sum_j ||u_n-1||_{L^q(Q_j)}^{s(1-\alpha)}$$
%where $\alpha=\frac{p(q-s)}{s(q-p)}$.
%Now we have 
%$$\sum_j ||u_n-1||_{L^q(Q_j)}^{s(1-\alpha)}\leq C  \sum_j \left( \int_{Q_j} |u_n-1|^2+|\nabla u_n|^2 dx\right)^{\frac{s(1-\alpha)}{2}}$$
%and therefore if $\frac{s(1-\alpha)}{2}>1$ we get
%$$\int_{\Omega_N} |u_n-1|^s dx\leq  C\left(\sup_j ||u_n-1||_{L^p(Q_j)}^{s\alpha} \right) ||u_n||_{\tilde H_0^1(\Omega_N)}^{s(1-\alpha)}\leq C \left(\sup_j ||u_n-1||_{L^p(Q_j)}^{s\alpha} \right).$$
%The condition  $\frac{s(1-\alpha)}{2}>1$ is equivalent to the previous assumption $q(s-p)>2(q-p)$.
%Under the initial  assumption that $\sup_j \sup_j \int_{Q_j} |u_n-1|^p \rightarrow 0$, we conclude that  $||(u_n-1)||_{L^s(\Omega_N)}\rightarrow 0$. This proves that
%$||(u_n-1)||_{L^r(\Omega_N)}\rightarrow 0$ if $r>p$. The case $r\leq p$ follows from the interpolation 
%$||u||_{L^r(\Omega_N)}\leq ||u||_{L^2(\Omega_N)}^{1-\lambda}||u||_{L^s(\Omega_N)}^{\lambda}$.  An analogous estimate  works for $v_n$ and in dimension $d=3$.
\end{proof}
\begin{prop}[Splitting property]\label{prop:12}
Given $0 <c<\sqrt{2}$ and $\psi_n$ a bounded Palais-Smale sequence at the energy level $\gamma_N(c)$. Then there exist $k$ sequences of points $\left\{y_n^j\right\} \subset \{0\} \times \R^{d-1}$, $1\leq j\leq k,$ with $|y_n^j - y_n^k| \to +\infty$ if $j \neq k$, such that, up to subsequence,
\begin{eqnarray}
\psi_n -1  =w_n + \sum_{j=1}^k (\psi^j(\cdot + y_n^j)-1)  \text{ with } w_n \rightarrow 0 \text { in }  H^1_{0}(\Omega_N), \\ 
||\psi_n-1||_{ H^1_0(\Omega_N)}^2\rightarrow \sum_{j=1}^k  ||\psi^j-1||_{H^1_0(\Omega_N)}^2, \qquad \qquad \qquad \label{eq:spl12} \\ 
I^c_N(\psi_n ) \rightarrow \sum_{j=1}^k I^c_N(\psi^j), \qquad \qquad \qquad \qquad \quad \label{eq:spl3}
\end{eqnarray}
where $\psi^j$ are nontrivial finite energy solutions to \eqref{eq:GPstrip}. 

In particular $I^c_N(\psi^j)\leq \gamma_N(c) \leq \chi(c_0)$, $\mE({\psi^j}) \leq \limsup_{n \to +\infty}  \, \mE(\psi_n)$ for all $j =1, \dots k$.
\end{prop}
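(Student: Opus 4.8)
The plan is to run the standard concentration-compactness / profile-decomposition argument adapted to the affine space $1+H_0^1(\Omega_N)$, using that $\Omega_N$ is invariant under translations in the $\tilde{x}$ variable. Writing $\phi_n = \psi_n - 1 \in H_0^1(\Omega_N)$, I would first extract a weak limit $\phi_n \rightharpoonup \phi^1$ (up to subsequence). The limit $\psi^1 = 1 + \phi^1$ is a critical point of $I^c_N$ on $1+H_0^1(\Omega_N)$, hence a solution of \eqref{eq:GPstrip}, because $\psi_n$ is a Palais--Smale sequence and the functional is $C^2$; the cubic-type nonlinearities converge in the appropriate weak sense by the Rellich compactness on cubes together with the subcritical Sobolev range fixed in \eqref{sob}. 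If $\phi^1 \neq 0$, set $y_n^1 = 0$; if $\phi^1 = 0$ one passes directly to the translation step below with a nonzero profile.

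Next comes the iteration. Having subtracted the first profile, consider $\phi_n - \phi^1$. If this remainder does not vanish in the sense of Lemma \ref{eq:novan}, then by that lemma there is a cube $Q_{j_n}$, equivalently a center $y_n^2 \in \{0\}\times\R^{d-1}$, on which the $L^p$-mass stays bounded away from zero. Translating, $\phi_n(\cdot + y_n^2)$ has a nonzero weak limit $\phi^2$, and because $\Omega_N$ is translation-invariant in $\tilde{x}$ the translated sequence again lies in $H_0^1(\Omega_N)$ and $\psi^2 = 1+\phi^2$ solves \eqref{eq:GPstrip}. The translations must diverge, $|y_n^j - y_n^k| \to +\infty$, since otherwise the two profiles would be captured by the same weak limit. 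One repeats this extraction. The key bookkeeping is the orthogonality relation: at each stage the $H_0^1$-norms split, so after extracting $\ell$ profiles the remaining mass satisfies $\|\phi_n\|_{H_0^1}^2 - \sum_{j=1}^{\ell}\|\phi^j\|_{H_0^1}^2 \to \liminf \|\text{remainder}\|^2 \geq 0$. Since each nontrivial solution of \eqref{eq:GPstrip} carries a uniformly positive $H_0^1$-norm (by the local-minimum/mountain-pass lower bound implicit in the coercivity estimate of Proposition \ref{min-max}, where $c^2<2$ forces $\|\psi^j-1\|_{H_0^1}$ bounded below), the process terminates after finitely many steps $k$, and the final remainder $w_n$ vanishes in the sense of Lemma \ref{eq:novan}, whence $w_n \to 0$ strongly in $H_0^1(\Omega_N)$ by that lemma applied for some subcritical $r$. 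This yields the decomposition and the norm splitting \eqref{eq:spl12}.

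For the energy splitting \eqref{eq:spl3} I would expand $I^c_N(\psi_n)$ using the decomposition, exploiting that the quadratic part splits exactly by \eqref{eq:spl12}, that the momentum term $\mathcal{P}$ is quadratic and hence also splits under the diverging translations, and that the nonlinear term $\frac14\int(1-|\psi|^2)^2$ decouples because the profiles have asymptotically disjoint supports (their cross terms vanish as $|y_n^j-y_n^k|\to\infty$, using the decay of each solution from Lemma \ref{lem:decay} and the strong vanishing of $w_n$). The final inequalities follow: each $I^c_N(\psi^j)\geq 0$ by Lemma \ref{lem:poho2}, so from \eqref{eq:spl3} every single term is bounded by the total, giving $I^c_N(\psi^j)\leq \gamma_N(c)\leq \chi(c_0)$; and $\mathcal{E}(\psi^j)\leq \limsup \mathcal{E}(\psi_n)$ follows similarly since $\mathcal{E}$ is a sum of nonnegative pieces that split.

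The main obstacle I expect is the passage from \emph{vanishing in the sense of Lemma \ref{eq:novan}} to \emph{strong $H_0^1$-convergence of the remainder}, and more precisely controlling the nonlinear term. The nonlinearity $(1-|\psi|^2)^2$ is quartic, hence critical ($d=4$ exponent heuristics) and borderline in $d=3$; one must use the $L^\infty$ bound from Lemma \ref{lem:bound} to treat the high-power terms and reduce everything to the subcritical range where \eqref{sob} and Lemma \ref{eq:novan} apply. A second delicate point is verifying that the weak limits are genuinely \emph{solutions} of \eqref{eq:GPstrip} rather than merely weak limits: this requires that $(I^c_N)'(\psi_n)\to 0$ transfers to $(I^c_N)'(\psi^j)=0$ in the limit, which is where the translation-invariance of $\Omega_N$ in $\tilde x$ is essential, since it guarantees that the translated functionals coincide with $I^c_N$ and that test functions can be centered at $y_n^j$.
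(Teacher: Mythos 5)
Your skeleton matches the paper's: exclude vanishing, extract profiles by translations along $\{0\}\times\R^{d-1}$, split norms and energies, and terminate via a uniform lower bound on nontrivial critical points (the paper proves exactly the Nehari-type bound $\inf_{\psi\in\mathcal{N}}\|\psi-1\|_{H^1_0(\Omega_N)}>0$ that you invoke, via $0=I_N'[\psi](1-\psi)$ and the inequality $cxy\leq\frac{c^2}{4}x^2+y^2$). But there is a genuine gap precisely at the step you yourself flag as ``the main obstacle'': you never supply the mechanism that upgrades vanishing in the sense of Lemma \ref{eq:novan} to strong $H^1_0$ convergence of the remainder (and, at the zeroth stage, that excludes total vanishing of $\psi_n$ itself, without which no nonzero profile exists and the decomposition would end with $k=0$, incompatible with $I^c_N(\psi_n)\to\gamma_N(c)>0$). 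Lemma \ref{eq:novan} yields only $\|u_n-1\|_{L^r}+\|v_n\|_{L^r}\to 0$ for subcritical $r>2$; it gives no control whatsoever on gradients. Moreover, the tool you propose for the quartic term, the $L^\infty$ bound of Lemma \ref{lem:bound}, is not admissible here: that lemma applies to solutions of \eqref{eq:GPellip} or \eqref{eq:GPstrip}, whereas a Palais--Smale sequence in $1+H^1_0(\Omega_N)$ carries no a priori $L^\infty$ bound. (In fact no such bound is needed for $d=2,3$: the quartic term is subcritical and is handled by H\"older together with the vanishing lemma.)

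What actually closes the gap in the paper is the Palais--Smale information paired against the specific test function $1-\psi_n$. Under vanishing, H\"older reduces $\int_{\Omega_N}(1-|\psi_n|^2)^2$ to $4\|u_n-1\|_{L^2}^2+o(1)$, and $I_N'[\psi_n](1-\psi_n)$ to $\int|\nabla u_n|^2+|\nabla v_n|^2-2c\int(1-u_n)\partial_{x_1}v_n+2\|u_n-1\|_{L^2}^2+o(1)$; then $\gamma_N(c)+o(1)=I_N(\psi_n)-\frac12 I_N'[\psi_n](1-\psi_n)=o(1)$ is the contradiction ruling out total vanishing. For the remainder $z_n^1=\psi_n-\psi^1$, subtracting the exact identity $I_N'[\psi^1](1-\psi^1)=0$ and absorbing the momentum cross term via $cxy\leq\frac{c^2}{4}x^2+y^2$ yields the coercive estimate \eqref{eq:import2}, whose right-hand side is $o(1)$ under vanishing of $z_n^1$; since $c^2<2$ this forces $z_n^1\to 0$ strongly, contradicting non-convergence --- note this is the only place in the splitting where the subsonic restriction enters, a structural point your sketch misses entirely. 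Without some such derivative-based computation your extraction loop can neither start (no nonzero profile is guaranteed) nor end (no strong convergence $w_n\to 0$ in $H^1_0$). A minor further point: the decoupling of the quartic term does not need the decay estimates of Lemma \ref{lem:decay}, which are stated for finite-energy solutions on $\R^d$ and are not available on $\Omega_N$; a standard Brezis--Lieb argument under weak convergence and diverging translations suffices, as the paper indicates.
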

\begin{proof}
In this proof, for the sake of clarity, we drop the dependence on $c$. 

We first claim that $\psi_n$ is not vanishing. Reasoning by contradiction, by means of Lemma \ref{eq:novan} we have
\begin{equation}\label{eq:consvan}
\int_{\Omega_N} |u_n-1|^r+|v_n|^r  \rightarrow 0
\end{equation}
for any $2< r<\infty$ if $d=2$, $2<r<6$ if $d=3.$\\
Then, 
$$\int_{\Omega_N} (1-u_n^2-v_n^2)^2=\int_{\Omega_N} 4(u_n-1)^2+(u_n-1)^4+v_n^4+$$
$$+\int_{\Omega_N} 4(u_n-1)^3+4(u_n-1)v_n^2+2(u_n-1)^2v_n^2$$
and it follows by H\"older and \eqref{eq:consvan}  that
\begin{equation}\label{eq:crucvan}
\int_{\Omega_N} (1-u_n^2-v_n^2)^2 =\int_{\Omega_N} 4(u_n-1)^2 +o(1).
\end{equation}
Therefore
\begin{eqnarray} 
&I_N(u_n,v_n)=\frac{1}{2} \displaystyle \int_{\Omega_N} |\nabla u_n|^2 + \frac{1}{2} \int_{\Omega_N} |\nabla v_n|^2 -c \int_{\Omega_N} (1-u_n(x))\partial_{x_1} v_n(x) + \nonumber\\
&+ \displaystyle \int_{\Omega_N} (u_n-1)^2 +o(1) \label{eq:vanen}
\end{eqnarray}
On the other hand direct computation gives
$$o(1)=I_N'[\psi_n](1-\psi_n)=\int_{\Omega_N}|\nabla u_n|^2+|\nabla v|^2  -2c \int_{\Omega_N} (1-u_n)\partial_{x_1} v_n(x) -\\$$
$$+\int_{\Omega_N} (1-u_n^2-v_n^2)(u_n(1-u_n)-v_n^2).$$
Arguing as before we notice that
$$\int_{\Omega_N} (1-u_n^2-v_n^2)v_n^2=o(1)$$
and, thanks to  H\"older inequality and \eqref{eq:crucvan}
$$\int_{\Omega_N} (1-u_n^2-v_n^2)(u_n(1-u_n))=\int_{\Omega_N} (1-u_n^2-v_n^2)(u_n-1+1)(1-u_n))$$
$$=\int_{\Omega_N} (1-u_n^2-v_n^2)(1-u_n)+o(1)= \int_{\Omega_N} (1-u_n^2)(1-u_n)+o(1) = $$
$$ \int_{\Omega_N} (2(1-u_n)-(1-u_n)^2)(1-u_n)+o(1)= 2||u_n-1||_{L^2(\Omega_N)}^2+o(1).$$
We get hence that
\begin{equation}\label{eq:vander}
I_N'[\psi_n](1-\psi_n)=\int_{\Omega_N}|\nabla u_n|^2+|\nabla v_n|^2  -2c \int_{\Omega_N} (1-u_n)\partial_{x_1} v_n(x) +2||u_n-1||_{L^2(\Omega_N)}^2+o(1)
\end{equation}
Taken into account  \eqref{eq:vanen} and\eqref{eq:vander} we conclude
$$\gamma(N)+o(1)=I_N(\psi_n)-\frac 12 I_N'[\psi_n](1-\psi_n)=o(1),$$
a contradiction. 

\medskip 

Once vanishing is excluded, there exists a sequence  $y^1_n \in \{0\} \times \R^{d-1}$ and $\psi^1 \in 1 + H_0^1(\Omega_N)$, $\psi^1 \neq 1$,  such that
$$\psi_n(\cdot + y^1_n) - \psi^1 \rightharpoonup 0 \mbox { in } H_0^1(\Omega_N)$$
up to a subsequence. From the definition of weak convergence we obtain that
$$\frac 12 \int_{\Omega_N}|\nabla \psi_n|^2 -c \mathcal{P}(\psi_n)=\frac 12 \int_{\Omega_N}|\nabla \psi^1|^2 -c \mathcal{P}(\psi^1)+$$
$$+\frac 12 \int_{\Omega_N}|\nabla (\psi_n-\psi_0)|^2 -c \mathcal{P}(\psi_n-\psi^1)+o(1).$$
Now we notice that the nonlinear term fulfills the following splitting property 
\[
\int_{\Omega_N} \left(1-|\psi_n|^2\right)^2 =\int_{\Omega_N} \left(1-|\psi^1|^2\right)^2 +\int_{\Omega_N} \left(1-|\psi_n-\psi^1|^2\right)^2 +o(1).
\]
The proof of the splitting property is standard.
A a consequence the action splits as 
\begin{equation}\label{eq:split}
I_N(\psi_n ) = I_N(\psi^1)+ I_N((\psi_n-\psi^1))+o(1).
\end{equation}

Clearly $\psi^1$ is a weak solution of \eqref{eq:GPstrip}. Now if $\psi_n-\psi^1 \rightarrow 0$ in $\tilde H^1_0(\Omega_N)$ the lemma is proved. Let us assume the contrary, i.e. that  
$z_n^1=\psi_n-\psi^1 \rightharpoonup 0$ and
$z_n^1=\psi_n-\psi^1 \nrightarrow 0$ in $H^1_0(\Omega_N)$. 

We aim to prove that there exists a sequence of points $y^2_n$ and $\psi^2 \in 1 + H_0^1(\Omega_N)$, $\psi^2 \neq 1$, such that
$z_n^1(\cdot +y^2_n) - \psi^2 \rightharpoonup 0$.  Let us argue again by contradiction assuming that the sequence $z_n^1$ vanishes which means 
by Lemma \ref{eq:novan} that
\begin{equation*}
\int_{\Omega_N} |u_n-u^1|^r+|v_n-v^1|^r  \rightarrow 0
\end{equation*}
for any $2< r<\infty$ if $d=2$, $2<r<6$ if $d=3,$
where $\psi^1 = u^1 + i v^1$.
We have
$$I_N'[\psi_n](1-\psi_n)=\int_{\Omega_N}|\nabla u_n|^2+|\nabla v_n|^2  -2c \int_{\Omega_N} (1-u_n)\partial_{x_1} v_n(x) +\\$$
$$+\int_{\Omega_N} (1-u_n^2-v_n^2)(u_n(1-u_n)-v_n^2)=o(1),$$
and
$$I_N'[\psi^1](1-\psi^1)=\int_{\Omega_N}|\nabla u^1|^2+|\nabla v^1|^2  -2c \int_{\Omega_N} (1-u^1)\partial_{x_1} v^1(x) -\\$$
$$+\int_{\Omega_N} (1-(u^1)^2-(v^1)^2)(u^1(1-u^1)-(v^1)^2)=0.$$
Using the splitting property \eqref{eq:split} and using  $I_N'[\psi_n](1-\psi_n)-I_N'[\psi^1](1-\psi^1)=o(1)$ we get
$$\int_{\Omega_N}|\nabla z_n^1|^2 -2c \int_{\Omega_N} Re(z_n^1)\partial_{x_1}Im( z_n^1(x)) +o(1)=$$
$$\underbrace{\int_{\Omega_N} (1-u_n^2-v_n^2)(u_n(u_n-1)+v_n^2)-\int_{\Omega_N} (1-(u^1)^2-(v^1)^2)(u^1((u^1)-1)+(v^1)^2)}_{=I_4}.$$
Now, using the elementary inequality $c x y\leq \frac{c^2}{4}x^2+y^2$ we have
\begin{equation}\label{eq:import2}
\int_{\Omega_N}|\nabla Re(z_n^1)|^2+ (1-\frac{c^2}{2})\int_{\Omega_N}|\nabla Im(z_n^1)|^2 \leq 2 \int_{\Omega_N}|Re(z_n^1)|^2+ I_4+o(1).
\end{equation}
Notice that
$$(1-u_n^2-v_n^2)(u_n(1-u_n)-v_n^2)=(1-u_n^2-v_n^2)^2+(1-u_n^2-v_n^2)(u_n-1)$$
and that
$$(1-u_n^2-v_n^2)^2=4(u_n-1)^2+(u_n-1)^4+v_n^4+ 4(u_n-1)^3+4(u_n-1)v_n^2+2(u_n-1)^2v_n^2.$$
On the other hand
$$(1-u_n^2-v_n^2)(u_n-1)=-2(1-u_n)^2+(1-u_n)^3+v_n^2(1-u_n).$$
Therefore, assuming that $z_n^1=\psi_n-\psi^1\rightharpoonup 0$
we get
$$2 \int_{\Omega_N}|Re(z_n^1)|^2+I_4=o(1)$$
and hence we get a contradiction with \eqref{eq:import2}.\\
%Now using the fact that $z_n=\psi_n-\psi^0\rightharpoonup 0$ we have, after a lenghtly computation, 
%\begin{eqnarray}
%2 \int_{\Omega_N}|Re(z_n)|^2dx+ I_4 \lesssim \int_{\Omega_N}|u_n-u^0|^2dx+\int_{\Omega_N}|u_n-u^0|^3dx+ \label{eq:import2}\\
%+\int_{\Omega_N}|u_n-u^0|^4dx+\int_{\Omega_N}|u_n-u^0|^2|v_n-v^0|^2dx+o(1). \nonumber
%\end{eqnarray}
%Calling $\left\{Q_j\right\}$ be the set of disjoint cubes that covers $\Omega_N$ of edge $L$. We have that
%$$\int_{\Omega_N} |u_n-u^0|^2 dx\leq \sum_j\left( \left(\int_{Q_j} |u_n-u^0|^s dx\right)^{\frac{2}{s}}L^{d(\frac{s-2}{s})}\right)\leq $$
%$$\leq  C L^{d(\frac{s-2}{s})}  ||u_n-u^0||_{\tilde H^1_0(\Omega_N)}^2$$
%and we choose $L$ sufficently small such that $C L^{d(\frac{s-2}{s})}  <\frac 12 (1-\frac{c^2}{2}).$
%Now, as in Lemma \ref{eq:novan}, if $ \sup_j \int_{Q_j} |u_n-1|^p+|v_n|^p dx \rightarrow 0$ then
%$$\int_{\Omega_N}|u_n-u^0|^3dx +\int_{\Omega_N}|u_n-u^0|^4dx+\int_{\Omega_N}|u_n-u^0|^2|v_n-v^0|^2dx=o(1)$$
%and therefore, assuming that $z_n \nrightarrow 0$ in $\tilde H^1_0(\Omega_N)$, 

We have hence proved the existence of a sequence $y_n^2 \in \{0\} \times \R^{d-1}$ and $\psi^2 \in 1 + H_0^1(\Omega_N)$, $\psi^2 \neq 1$, such that 
$$z_n^1(\cdot +y_n^2)- \psi^2 \rightharpoonup 0.$$

Clearly, $|y_n^1 - y_n^2| \to +\infty$, and $\psi^2$ is also a (weak) solution of \eqref{eq:GPstrip}.

Now we can iterate the splitting argument defining $z_n^2=z_n(x,y+y_n^2)-\psi^2$. We aim to show  that we can have only a finite number of iterative steps. \\
We claim that 
$$\inf_{\psi \in \mathcal{N}}||1-\psi||_{H^1_0(\Omega_N)}>0$$
where
$$\mathcal{N}:=\left\{ \psi \in  1 + H^1_0(\Omega_N), \psi\neq 0, \ I_N'[\psi](1-\psi)=0 \right\}.$$
This allows us to conclude thanks to \eqref{eq:spl12}. In order to prove the claim we notice the identity
\begin{eqnarray}
& \displaystyle \int_{\Omega_N} (1-u_n^2-v_n^2)(u_n(1-u_n)-v_n^2)=\int_{\Omega_N} 2(u-1)^2+3(u-1)^3+ \nonumber\\
&+ \displaystyle \int_{\Omega_N} \left( 3 v^2(u-1)+2(u-1)^2v^2+3(u-1)^3+ (u-1)^4+v^4\right)
\end{eqnarray}
such that, thanks to the inequality
$$ -2c \int_{\Omega_N} (1-u)\partial_{x_1} v(x)  \geq   -\frac{c^2}{2} \int_{\Omega_N} |\nabla v|^2 - 2 \int_{\Omega_N} (u(x)-1)^2 $$
we obtain
\begin{eqnarray}
&0=I_N'[\psi](1-\psi)\geq \displaystyle  \int_{\Omega_N} |\nabla u|^2 +(1-\frac{c^2}{2}) \displaystyle  \int_{\Omega_N} |\nabla v|^2 + \label{eq:kfin}\\
& \displaystyle  \int_{\Omega_N} \left(3(u-1)^3+3 v^2(u-1)+2(u-1)^2v^2+3(u-1)^3+ (u-1)^4+v^4\right) \nonumber. 
\end{eqnarray}
From \eqref{eq:kfin} we get
$$\alpha||\psi -1||_{H^1_0(\Omega_N)}^3+\beta||\psi -1||_{H^1_0(\Omega_N)}^4\geq (1-\frac{c^2}{2})||\psi - 1||_{ H^1_0(\Omega_N)}^2$$
and hence $\inf_{\psi \in \mathcal{N}}||\psi-1||_{ H^1_0(\Omega_N)}>0$.\\
Finally, recall that by \eqref{eq:A bounded}, $I_n(\psi^j )>0$. Now, we have up to space translation
$$\gamma_N+o(1)=I_N(\psi_n)=\sum_j I_N(\psi^j(\cdot +y_n^j)) +o(1) \geq I_N(\psi^j) +o(1)$$
and hence $I_N(\psi^j)\leq \gamma_N.$ 
\end{proof}

\section{Uniformly bounded energy solutions in approximating domains}

In this section we prove shall prove the following result:

\begin{prop}  \label{prop-crucial} There exists a subset $E \subset (0,\sqrt{2})$ of plein measure satisfying that, for any $c \in E$, there exists a subsequence $k: \N \to \N$ strictly increasing such that:
	
	\begin{enumerate} 
		
		\item There exists a nontrivial finite energy solution $\psi_N$ of the problem:
		
		$$
		\begin{array}{rcr} i c\partial_{x_1}\psi_N +\Delta \psi+\left(1-|\psi_N|^2\right)\psi_N & = & 0  \ \  \text{ on } \Omega_{k(N)}, \\ \psi_N & = & 1 \text{ on } \partial \Omega_{k(N)}. \end{array} $$
		
		\item $\mathcal{E}(\psi_N) \leq M$ for some positive constant $M=M(c)$ independent of $N \in \N$.
		
		\item $I^c_{k(N)}(\psi_N) \leq \gamma_{k(N)}(c)$.
		
		\item $ind(\psi_N) \leq 1.$
		
	\end{enumerate} 
\end{prop}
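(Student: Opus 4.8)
The plan is to combine Struwe's monotonicity trick, applied slab by slab, with a Fatou-type extraction that produces a single sequence of slabs along which the energy stays uniformly bounded. Throughout I fix $c_0 \in (0,\sqrt2)$ and work on the interval $[c_0,\sqrt2)$, where Proposition \ref{min-max} guarantees $0 < \gamma_N(c) \leq \chi(c_0)$ for every $N \geq N_0$. Two monotonicity properties are at the heart of the argument. First, since $\Omega_N \subset \Omega_{N'}$ for $N < N'$ (so that $\Gamma(N) \subset \Gamma(N')$ after extension by zero, the endpoints $1$ and $\psi_0$ being common), the map $N \mapsto \gamma_N(c)$ is non-increasing. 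Second, because $I^c_N = \mathcal{E} - c\mathcal{P}$ depends affinely on $c$ and the mountain-pass geometry can be realized along paths on which the momentum is nonnegative near the maximum, the map $c \mapsto \gamma_N(c)$ is non-increasing on $[c_0,\sqrt2)$ (this last point requires a short argument controlling the sign of $\mathcal{P}$ on the relevant part of the path space); in particular $\gamma_N(\cdot)$ is differentiable for a.e. $c$ and has total variation at most $\chi(c_0)$.

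First I would carry out the monotonicity trick for a fixed slab. At a point $c$ where $\gamma_N(\cdot)$ is differentiable, the standard construction (\cite{struwe}, in the form adapted to functionals of the type $\mathcal{E}-c\mathcal{P}$) produces a Palais-Smale sequence $\{\psi_n\}$ for $I^c_N$ at level $\gamma_N(c)$ along which the momentum is controlled by the difference quotients of $\gamma_N$, so that $\mathcal{P}(\psi_n) \leq |\gamma_N'(c)| + o(1)$. Since $I^c_N(\psi_n) = \gamma_N(c) + o(1)$, this yields
\[ \mathcal{E}(\psi_n) = I^c_N(\psi_n) + c\,\mathcal{P}(\psi_n) \leq \chi(c_0) + \sqrt2\,|\gamma_N'(c)| + o(1), \]
so the sequence is bounded in $1 + H^1_0(\Omega_N)$. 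I then feed this bounded Palais-Smale sequence into the splitting Proposition \ref{prop:12}: vanishing is excluded there, and one obtains at least one nontrivial solution $\psi_N$ of \eqref{eq:GPstrip} on $\Omega_N$ with $I^c_N(\psi_N) \leq \gamma_N(c)$ and $\mathcal{E}(\psi_N) \leq \limsup_n \mathcal{E}(\psi_n) \leq \chi(c_0) + \sqrt2\,|\gamma_N'(c)|$. This already gives items (1) and (3); the defect is that the energy bound still depends on $N$ through $|\gamma_N'(c)|$.

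The key analytic step, and the one I expect to be the main obstacle, is to remove this $N$-dependence for a full-measure set of speeds. Because each $\gamma_N(\cdot)$ is monotone with values in $[0,\chi(c_0)]$, its total variation is bounded, hence $\int_{c_0}^{\sqrt2} |\gamma_N'(c)|\,dc \leq \chi(c_0)$ uniformly in $N$. Applying Fatou's lemma to $\liminf_N |\gamma_N'(c)|$ gives
\[ \int_{c_0}^{\sqrt2} \liminf_{N} |\gamma_N'(c)|\,dc \leq \liminf_{N} \int_{c_0}^{\sqrt2} |\gamma_N'(c)|\,dc \leq \chi(c_0), \]
so $\liminf_N |\gamma_N'(c)| < \infty$ for a.e. $c \in [c_0,\sqrt2)$. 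For each such $c$ I extract a strictly increasing subsequence $k = k(N)$ with $\sup_N |\gamma_{k(N)}'(c)| =: C(c) < \infty$, and set $M(c) = \chi(c_0) + \sqrt2\,C(c)$. Along this subsequence the solutions produced above satisfy $\mathcal{E}(\psi_N) \leq M(c)$ independently of $N$, which is exactly (2). Discarding the countably many null sets (points of non-differentiability of each $\gamma_N$, together with the Fatou exceptional set) and then letting $c_0 = 1/m \downarrow 0$ yields a set $E \subset (0,\sqrt2)$ of full measure.

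Finally, for the Morse index bound (4) I would use that the Palais-Smale sequence above comes from a mountain-pass (one-parameter) min-max, so by the arguments of \cite{FG} it can be chosen with approximate Morse index at most $1$. This bound is inherited by each bubble of the splitting: since the concentration points separate, the quadratic forms $Q$ essentially decouple and their negative-definite subspaces add up, so at most one bubble can carry a negative direction. Passing from the approximate to the exact index through the $C^1_{loc}$ convergence and the lower semicontinuity recorded in Remark \ref{remark morse} gives $ind(\psi_N) \leq 1$. The delicate points to verify are the precise momentum bound furnished by the monotonicity trick (which fixes the constant in the energy estimate) and the compatibility of the Fatou extraction with the a.e. differentiability needed on each slab simultaneously; this is where the ``key analytic argument'' announced in the introduction does its work.
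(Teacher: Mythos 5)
Your overall architecture — slab-by-slab monotonicity trick, a derivative bound feeding an energy bound, a Fatou/measure-theoretic extraction of a single subsequence of slabs with uniformly controlled derivative at a.e.\ fixed $c$, then the splitting proposition and a Fang--Ghoussoub index bound — is exactly the paper's proof. But there is one genuine gap at the foundation of your quantitative bookkeeping: you assert that $c \mapsto \gamma_N(c)$ is non-increasing because ``the mountain-pass geometry can be realized along paths on which the momentum is nonnegative near the maximum,'' deferring this to ``a short argument.'' No such argument is available: the momentum $\mathcal{P}$ has no sign on $1+H^1_0(\Omega_N)$, and even if one exhibits particular paths with $\mathcal{P}\geq 0$ at their maximum points (as for the segment $1+t\phi_0$), the min-max is over \emph{all} paths, and the Struwe/Jeanjean construction compares near-optimal paths for nearby parameters $c_n \to c$, at near-maximal points of the \emph{perturbed} functional, where the sign of $\mathcal{P}$ is uncontrolled. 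Since $\mathcal{P}(\psi_n)\leq |\gamma_N'(c)|+o(1)$, the BV bound on $\gamma_N$, and hence your whole Fatou step, all rest on this monotonicity, the argument as written does not close. The paper's fix is precisely to sidestep the sign of $B=\mathcal{P}$ by using the positivity of $A=\mathcal{E}$ instead: since $I^c/c = \mathcal{E}/c - \mathcal{P}$ is pointwise non-increasing in $c$, the \emph{renormalized} level $\sigma_N(c)=\gamma_N(c)/c$ is decreasing, the trick then yields $\mathcal{E}(\psi_n)\leq \gamma_N(c)-c\,\gamma_N'(c)+o(1) = -c^2\sigma_N'(c)+o(1)$ at differentiability points of $\sigma_N$, and the total-variation estimate $\int_{c_0}^{c}|\sigma_N'(s)|\,ds \leq \chi(c_0)/c_0$ replaces your bound on $\int|\gamma_N'|$. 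With $\gamma_N'$ replaced by $\sigma_N'$ throughout, the rest of your proof goes through verbatim.

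Apart from this, your steps match the paper's. Your Fatou extraction ($\int \liminf_N |\sigma_N'| \leq \liminf_N \int |\sigma_N'| < \infty$, hence $\liminf_N|\sigma_N'(c)|<\infty$ a.e., then pick $k(N)$ realizing the liminf and discard the countably many null sets of non-differentiability) is an equivalent, arguably cleaner, packaging of the paper's explicit construction with the sets $D_{N,M}=\{c: \sigma_N \text{ not differentiable or } |\sigma_N'(c)|>M\}$ and $D(c_0)=\cap_M\cup_N\cap_{k\geq N}D_{k,M}$; both say the same thing. For item (4), the paper is no more detailed than you are: it invokes Proposition \ref{trick}(3) (noting that coercivity of $\mathcal{E}$ makes $H_\delta$ bounded, where uniform $C^{2,\alpha}$ regularity holds) and then passes the approximate index bound to the solution; your bubble-decoupling remark and the appeal to the semicontinuity of the index under $C^1_{loc}$ convergence (Remark \ref{remark morse}) is consistent with, and slightly more explicit than, what the paper records.
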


One of the key points here is that in (2) the energy is bounded uniformly in $N$. This will be  essential later when passing to the limit as $N \to +\infty$.

\medskip
In a first subsection we will give an abstract result, which is basically well-known but maybe not in this specific form. Later we will apply that result to prove Proposition \ref{prop-crucial}.

\subsection{Entropy and Morse index bounds}

Entropy bounds on Palais-Smale sequences via monotonicity (also called monotonicity trick argument), is a tool first devised in \cite{struwe} that has been used many times since then, applied to a wide variety of problems. Here we need to adapt this argument to obtain uniform bounds in $N$, for a subsequence $k(N)$. Moreover, we will also use Morse index bounds for Palais-Smale sequences, in the spirit of \cite{FG, FG2}. For the sake of completeness, we state and give a proof of a general result in this subsection.

\begin{prop} \label{trick} Let $X$ be a Banach space and  $A$, $B: X \to \R$ two $C^1$ functionals. Assume that either $A(\p) \geq  0$ or $B(\p) \geq 0 $ for all $\p \in X$. For any $c \in J \subset \R^+_0$, we define $I^c:X \to \R$,
	
	$$I^c(\p) = A(\p) - c B(\p).$$
	
	We assume that there are two points $\p_0, \p_1$ in $X$, such that setting
	$$ \Gamma = \{g \in C([0,1], X),\ g(0) = \psi_0,\ g(1) = \p_1\},$$
	the following strict inequality holds for all $ c \in J$:
	$$\gamma(c) = \displaystyle \inf_{g \in \Gamma} \max_{t \in [0,1]} I^c (g(t)) > \max \{I(\p_0),I(\p_1)\}.$$
	
	Then the following assertions hold true:
	
	\begin{enumerate}
		\item If $B \geq 0$, $\gamma$ is decreasing. If instead $A \geq 0$, then the map $\sigma(c) = \frac{\gamma(c)}{c}$ is decreasing. As a consequence, both the maps $ \gamma, \ \sigma $ are almost everywhere differentiable.
		\item Let $c \in J$, $c>0$, be a point of differentiability of $\gamma$. Then, there exists a sequence $\{\psi_n\}$ such that
		
		 \medskip 
		
		\begin{enumerate}
			\item $I^c(\psi_n) \to \gamma(c)$,
			\item $(I^c)'(\psi_n) \to 0$ in $X^{-1}$, and
			\item $dist (\p_n, G_n) \to 0$, where 
			$$G_n=\{ \psi \in X:\ B(\psi) \leq \ - \gamma'(c) + 1/n, \ A(\psi) \leq \gamma(c) -\gamma'(c) c + \frac 1 n\}.$$
		\end{enumerate}
		
		\item Let us define, for any $\delta>0$, the sets
		
	\begin{equation} \label{FGH} \begin{array}{c} F_{\delta} = \{\psi \in X: \ |I^c(\psi) - \gamma(c)|< 2\delta \}, \\ \\ G_{\delta}= \{\psi \in X: \ B(\psi) < \gamma'(c) + \delta, \ A(\psi) < - c^2 \sigma'(c) + \delta \}, \\ \\
	H_\delta =\{\psi \in F_\e:\ dist(\psi, G_\delta) < 2 \delta \}. \end{array} \end{equation}
		
	Let us assume that $A$ and $B$ are uniformly $C^{2, \alpha}$ functionals in $H_\delta$ for some $\delta>0$. Then in (2) we can choose $\psi_n$ satisfying also that:
		
	\medskip	\begin{enumerate}
			\item[d)] There exists a sequence $\delta_n<0$, $\delta_n \to 0$, such that	$$ \sup \{dim \, Y: \ Y \subset X: I_c''(\psi_n)(\phi,\phi)\leq  \delta_n \| \phi\|^2 \ \forall \ \phi \in Y \} \leq 1.$$

	\end{enumerate}
	\end{enumerate}
	
\end{prop}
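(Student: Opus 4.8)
The plan is to treat the three assertions in order, since each builds on the previous. For part (1) the monotonicity is immediate from the sign hypothesis: if $B \geq 0$ and $c_1 < c_2$, then $I^{c_2}(\p) \le I^{c_1}(\p)$ pointwise, so $\max_t I^{c_2}(g(t)) \le \max_t I^{c_1}(g(t))$ for every $g \in \Gamma$, and taking the infimum gives $\gamma(c_2) \le \gamma(c_1)$. If instead $A \geq 0$, the same comparison applied to $I^c/c = A/c - B$ (with $1/c$ now playing the role of the parameter) shows that $\sigma(c) = \gamma(c)/c$ is decreasing. In either case a monotone function on an interval is differentiable almost everywhere by Lebesgue's theorem, which gives the last claim of (1).

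For part (2) I would run the monotonicity (Struwe) trick at a point $c$ where $\gamma$ is differentiable, working in the case $B \geq 0$ (the case $A \geq 0$ is identical after replacing $\gamma$ by $\sigma$ and $c$ by $1/c$). Fix $n$. By differentiability there is a small $h>0$ so that, writing $c' = c - h$, the quotient $\frac{\gamma(c) - \gamma(c')}{h}$ is within $1/(2n)$ of $\gamma'(c)$. Choose a near-optimal path $g \in \Gamma$ with $\max_t I^{c'}(g(t)) \le \gamma(c') + h/(4n)$, and restrict attention to the ``high'' part $M = \{t : I^c(g(t)) \ge \gamma(c) - h/(4n)\}$, which is nonempty since $\max_t I^c(g(t)) \ge \gamma(c)$. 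The identity $I^{c'}(g(t)) - I^c(g(t)) = h\,B(g(t))$ together with $B \ge 0$ then forces, for $t \in M$,
\[ B(g(t)) \le \frac{\gamma(c') - \gamma(c)}{h} + \frac{1}{2n} \le -\gamma'(c) + \frac 1n, \]
and since $B \ge 0$ gives $I^c \le I^{c'} \le \gamma(c') + o(1)$ along the whole path, also $A(g(t)) = I^c(g(t)) + c\,B(g(t)) \le \gamma(c) - c\gamma'(c) + c/n + o(1)$. Thus, up to reindexing $n$, the entire high part of such paths lies inside the constraint set $G_n$.

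It remains to find, among these localized near-maximizers, a point with small derivative, and here I would invoke a localized quantitative deformation lemma. If no point of $M$ had $\|(I^c)'\|$ small at a level close to $\gamma(c)$, then a pseudo-gradient flow supported in a neighborhood of $M$ would push $g$ to a path whose maximum of $I^c$ is strictly below $\gamma(c)$, contradicting the definition of $\gamma(c)$ as the min-max value. Because the support of the deformation stays inside the high part, the resulting Palais--Smale point remains close to $G_n$; letting $n \to \infty$ produces the sequence $\p_n$ of (2) with $\mathrm{dist}(\p_n, G_n) \to 0$.

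For part (3) the extra output is the Morse-index control, and this is the genuinely delicate step. Under the uniform $C^{2,\alpha}$ assumption on $A,B$ in $H_\delta$ the functional $I^c$ has a locally uniformly continuous Hessian along the localized flow, which is exactly what is needed to run the refined deformation argument of \cite{FG, FG2}: for a one-parameter (mountain-pass) min-max, if at the min-max level all approximate critical points had a negative subspace of dimension $\ge 2$, one could deform a path transversally through this $\ge 2$-dimensional unstable set and again lower its maximum below $\gamma(c)$, a contradiction. This yields near-maximizers $\p_n$ whose Hessian is negative definite only on subspaces of dimension $\le 1$, quantified by some $\delta_n \to 0^-$ as in (d). The main obstacle in the whole proof is precisely this last step: one must splice the Struwe localization of part (2) (which confines the sequence to $H_\delta$, where the $C^{2,\alpha}$ control is available) together with the second-order deformation of \cite{FG}, checking that the two deformations are compatible and that the index bound survives in the limit. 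The first-order ingredients (monotonicity and the estimates involving $\gamma'(c)$) are routine once the tolerances are tuned as above.
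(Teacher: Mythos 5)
Your proposal is correct and takes essentially the same route as the paper's own proof: part (1) by monotonicity of $I^c$ (resp.\ $I^c/c$) in $c$ plus Lebesgue's theorem; part (2) by the Jeanjean-type difference-quotient estimates (writing $B(g(t)) = \bigl(I^{c'}(g(t)) - I^{c}(g(t))\bigr)/(c-c')$ along near-optimal paths for a nearby speed, so that the high part of the path is trapped in the constraint set) combined with a quantitative deformation argument localized near $G_\delta$ at levels close to $\gamma(c)$; and part (3) by the Fang--Ghoussoub second-order deformation machinery of \cite{FG}, whose uniform $C^{2,\alpha}$ hypothesis is indeed only needed on the localized set $H_\delta$, exactly as the paper notes when citing Theorem 1.7 and Lemma 3.7 of \cite{FG}. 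Your tolerance bookkeeping ($c' = c-h$ with path error $h/(4n)$, versus the paper's $c_n = c - 1/\sqrt{n}$ with error $|c_n-c|^2$) is an immaterial variant of the same computation.
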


\begin{remark}
	Observe that, in general, there exist (PS) sequences for $I^c$ for any $c \in J$; see for instance \cite{AM, willem}. The above proposition shows that, for almost all values $c \in J$, there exist (PS) sequences for $I^c$ that satisfy also condition c). This extra condition c) can be useful in order to show convergence of the (PS) sequence. For instance, if either $A$ or $B$ is coercive, Proposition \ref{trick} implies the existence of bounded (PS) sequences, which is an important information in order to derive convergence. This is the result of \cite{jeanjean}.
	
	Assertion (3) comes from \cite{FG} and gives also a Morse index bound of the (PS) sequence. The only novelty is that we have assumed uniform $C^{2,\alpha}$ regularity on the set $H_\delta$. If $A$ or $B$ is coercive, it suffices to have uniform $C^{2,\alpha}$ estimates on bounded sets.
\end{remark}

\begin{remark} To keep the ideas clear, we have stated the result under a mountain-pass geometric assumption. The same principle holds for other types of min-max arguments. What is essential is that the family $\Gamma$ does not depend on the parameter $c$.

\end{remark}

\begin{proof}
	
	The proof  of (1) is inmediate. Indeed, if $B \geq 0$, $I^c(u)$ is decreasing in $c$. Since the family $\Gamma$ is independent of $c$, we have that $\gamma$ is decreasing. Instead, if $A \geq 0$, then the expression $\frac{I^c(u)}{c}$ is decreasing in $c$, and we conclude.
	
	In any of the two cases, the maps $\gamma$, $\sigma$ are differentiable in a set $E \subset J$ of plein measure.
	
	In order to prove (2), we are largely inspired by \cite{jeanjean}. We first state and prove the following lemma:
	
	\begin{lem} Let $c \in E$, $c>0$, then there exists $g_n \in \Gamma$ such that
		
		\begin{enumerate}
			\item $\max_{t \in [0,1]} I^c (g_n(t)) \to \gamma^c$.
			
			\item There exists $\rho_n >0$, $\rho_n \to 0$ such that for all $t \in [0,1]$ with $I^c(g_n(t)) \geq \gamma^c - \frac 1 n$, we have:
			
			$$  B(g_n(t)) \leq -\gamma'(c) + \rho_n, \ \ \limsup_{n \to +\infty} A(g_n(t)) \leq - c^2 \sigma'(c) + \rho_n.$$

		\end{enumerate}	
		
	\end{lem}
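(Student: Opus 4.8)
The plan is to produce the paths $g_n$ as near-optimal paths for \emph{nearby} values of the speed, exploiting the only information we really have at $c\in E$: differentiability of $\gamma$ (when $B\geq 0$) or of $\sigma$ (when $A\geq 0$). I would treat the case $B\geq 0$ in detail; the case $A\geq 0$ is completely dual after replacing $\gamma$ by $\sigma$, $I^c$ by $I^c/c$, and swapping the roles of $A$ and $B$, the bridge between the two formulations being the purely algebraic identity $-c^2\sigma'(c)=\gamma(c)-c\gamma'(c)$, valid since $\sigma=\gamma/c$. This is the structure of Jeanjean's monotonicity trick, and the crucial structural fact is that $\Gamma$ does not depend on the parameter, so one may compare $I^c$ and $I^{c'}$ along the \emph{same} competitors.

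First I would fix the rates. I choose $c_n\nearrow c$ with $c-c_n\to 0$ but $n(c-c_n)\to+\infty$; concretely $c-c_n=n^{-1/2}$ works. Using only that $\gamma(c_n)$ is an infimum over $\Gamma$, I select $g_n\in\Gamma$ with
$$\max_{t\in[0,1]} I^{c_n}(g_n(t))\leq \gamma(c_n)+(c-c_n)^2.$$
Everything is then driven by the one-line identity $I^c(\psi)=I^{c_n}(\psi)-(c-c_n)B(\psi)$. To get assertion (1), I use $B\geq 0$ and $c-c_n>0$ to bound $I^c(g_n(t))\leq I^{c_n}(g_n(t))\leq\gamma(c_n)+(c-c_n)^2$ for every $t$; taking the maximum and using continuity of $\gamma$ at $c$ gives $\max_t I^c(g_n(t))\leq\gamma(c_n)+(c-c_n)^2\to\gamma(c)$, while the reverse inequality $\max_t I^c(g_n(t))\geq\gamma(c)$ holds simply because $g_n\in\Gamma$.

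For assertion (2), I take any $t$ with $I^c(g_n(t))\geq\gamma(c)-\tfrac1n$. Solving the identity for $B$ and inserting both the near-optimality bound and this lower bound yields
$$B(g_n(t))=\frac{I^{c_n}(g_n(t))-I^c(g_n(t))}{c-c_n}\leq \frac{\gamma(c_n)-\gamma(c)}{c-c_n}+(c-c_n)+\frac{1}{n(c-c_n)}.$$
By differentiability of $\gamma$ at $c$ the first term tends to $-\gamma'(c)$, and the last two tend to $0$ by the choice of rates; setting $\rho_n$ equal to their sum gives the bound on $B$. The bound on $A$ I would then read off from $A=I^c+cB$: on the same set of $t$'s, $I^c(g_n(t))\leq\gamma(c)+o(1)$ by (1) and $B(g_n(t))\leq-\gamma'(c)+\rho_n$, so that $A(g_n(t))\leq\gamma(c)-c\gamma'(c)+o(1)=-c^2\sigma'(c)+o(1)$, which is the asserted estimate (and a fortiori the stated $\limsup$ inequality).

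The main obstacle is the quantitative bookkeeping of three independent small scales: the step $c-c_n$, the optimality tolerance $(c-c_n)^2$, and the threshold $\tfrac1n$ defining the relevant sublevel set of $I^c$. The estimate on $B$ is useful only if the ``error'' terms $(c-c_n)^2/(c-c_n)$ and $1/(n(c-c_n))$ both vanish, which forces the coupling $c-c_n\to 0$ \emph{together with} $n(c-c_n)\to+\infty$; the choice $c-c_n=n^{-1/2}$ is the clean way to reconcile these competing requirements, after which the whole argument reduces to the identity $I^c=I^{c_n}-(c-c_n)B$ and the definition of $\gamma'(c)$.
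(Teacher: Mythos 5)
Your proposal is correct and follows essentially the same route as the paper: near-optimal paths $g_n$ for $I^{c_n}$ with tolerance $(c-c_n)^2$, comparison of $I^c$ and $I^{c_n}$ along the same path (using $B\geq 0$ directly, or the factor $c/c_n$ when $A\geq 0$), the difference quotient $B(g_n(t))=\bigl(I^{c_n}(g_n(t))-I^c(g_n(t))\bigr)/(c-c_n)$, and then $A=I^c+cB$ together with $\gamma(c)-c\gamma'(c)=-c^2\sigma'(c)$. Even your coupling of scales is the paper's: it states the threshold as $\gamma(c)-|c-c_n|^2$ and then takes $c_n=c-n^{-1/2}$, which is exactly your choice $c-c_n=n^{-1/2}$ making the threshold $\gamma(c)-\tfrac1n$ and the error terms $(c-c_n)+\tfrac{1}{n(c-c_n)}$ vanish.
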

	
	\begin{proof}[Proof of the lemma]
		
		Take $c_n\in J$ an increasing sequence converging to $c$. For any $n \in \N$, there exists $g_n \in \Gamma$ such that $\max_{t \in [0,1]} I^{c_n}(g_n(t)) \leq \gamma (c_n) + |c_n-c|^2$. 
		
		If $B \geq 0$ we have that:
		
		$$  \max_{t \in [0,1]} I^{c}(g_n(t)) \leq  \max_{t \in [0,1]} I^{c_n}(g_n(t)) \leq \gamma (c_n) + |c_n-c|^2 \to \gamma(c).$$
		
		Instead, if $A \geq 0$, 
		
		$$  \max_{t \in [0,1]} I^{c}(g_n(t)) \leq \frac{c}{c_n} \max_{t \in [0,1]} I^{c_n}(g_n(t)) \leq \frac{c}{c_n} (\gamma (c_n) + |c_n-c|^2) \to \gamma(c).$$
		
		We now take $t \in [0,1]$ such that $I^c(g_n(t)) \geq \gamma(c) - |c-c_n|^2$. Then:
		
		$$ B(g_n(t)) = \frac{I^{c_n}(g_n(t)) - I^{c}(g_n(t)) }{c-c_n} $$$$\leq  \frac{ \gamma(c_n) + |c_n-c|^2 - \gamma(c) + |c_n-c|^2 }{c-c_n} \to -\gamma'(c).$$
		
		Moreover, $$\limsup_{n \to +\infty} A(g_n(t)) = \limsup_{n \to +\infty} I^c (g_n(t)) + c B(g_n(t)) \leq \gamma(c) - c \gamma'(c).$$
		
		It suffices then to take $c_n = c- \frac{1}{\sqrt{n}}.$
		
	\end{proof}
	
	Recall now the definitions of $F_\delta$, $G_\delta$ and $H_\delta$ given in \eqref{FGH}. By the previous lemma the set $F_{\delta} \cap G_\delta$ is not empty: indeed, the curves $g_n$ pass through $F_{\delta} \cap G_\delta$ for sufficiently large $n$ . Proposition \ref{trick}, (2) is proved if we show that for any $\delta>0$, $$\inf \{ \| (I^c)'(\psi)\|: \ \psi \in H_{\delta} \}=0.$$
	
	We argue by contradiction, and assume that there exists $\delta>0$ such that $\inf \{ \| (I^c)'(\psi)\|: \ \psi \in H_{\delta} \} \geq \delta >0$. A classical deformation argument shows that there exists $\e>0$, $\eta \in C([0,1] \times X:\ X)$ such that:
	
	\begin{enumerate} 
		\item[i)] $\eta(s, \psi) = \psi$ if $s=0$, $ |I^c(\psi)- \gamma(c)| > 2\e$ or $dist (\psi, G_\delta) >2 \delta$.
		\item[ii)] $I^c(\eta(1, \psi)) \leq \gamma(c) - \e$ for all $\psi \in G_\delta$ with $I^c(\psi) \leq \gamma(c) + \e$.
		\item[iii)] $\eta(s, \cdot)$ is a homeomorphism of $X$.
		\item[iv)] $\|\eta(s,\psi) - \psi \| < \delta$,
		\item[v)] $I^c(\eta(s, \psi)) \leq I^c(\psi)$ for all $\psi \in X$.	
		
	\end{enumerate}
	
	The existence of the above deformation can be found in \cite[Lemma 2.3]{willem}, for instance. Actually our notation is compatible with that reference, setting $S= G_\delta$, and taking $\e = \delta^2/8$, for instance.
	
	\bigskip
	
	We now take $n$ large enough and the curve $\gamma_n$ given by the lemma. If $I^c(g_n(t)) < \gamma(c) - \frac 1 n$, by b), we have that $I^c(\eta (1, g_n(t))) < \gamma(c) - \frac 1 n$. In on the contrary, $I^c(g_n(t)) \geq  \gamma(c) - \frac 1 n$, we can combine the lemma with ii) to conclude that $I^c(\eta(1, g_n(t))) \leq  \gamma(c) - \e$. As a consequence,
	
	$$ \max_t I^c(\eta \circ g_n(t)) <  \gamma(c),$$
	a contradiction.
	
	\medskip For the proof of (3) of Proposition \ref{trick}, we just use Theorem 1.7 of \cite{FG} to our sequence of paths $g_n$. It is important to observe that the uniform $C^{2,\alpha}$ regularity in \cite{FG} is required only in the set $H_\delta$ defined above (see, on that purpose, Lemma 3.7 of \cite{FG}).

\end{proof}

\subsection{Proof of Proposition \ref{prop-crucial}}

A direct application of the above results to our setting, combined with Proposition \ref{prop:12}, yields the existence of finite energy solutions in any domain $\Omega_N$, for almost all values of $c$. The problem here is that the energy of those solutions could diverge if we make $N \to +\infty$. In order to obtain uniform bounds independent of the parameter $N$, we need a more subtle application of Proposition \ref{trick}.

\medskip Define:
\begin{enumerate}
	\item  $X= 1+ H_0^1(\Omega_N)$, which is an affine Banach space, for which Proposition \ref{trick} also holds;
	\item $A(\psi)= \mE(\p)$, which is positive and coercive;
	\item $B(\p)= \mP(\p)$, the momentum;
	\item $J= (c_0, \sqrt{2})$ for a fixed value $c_0>0$. 
\end{enumerate}

For $N\geq N_0$ the functional $I^c_N$ has a min-max geometry (see Proposition \ref{min-max}); recall that $\gamma_N(c)>0$ is the function that associates to a speed $c \in J$ the min-max value of $I^c_N$. Clearly, $\sigma_N(c)= \frac{\gamma_N(c)}{c}$ is decreasing in $c$ as Proposition \ref{trick} shows. By Proposition \ref{trick}, there exists a bounded (PS) sequence in $H^1_0(\Omega_N)$ at level $\gamma_N(c)$. Proposition \ref{prop:12} yields then the existence of a solution $\psi_N$ with: 

$$I_N^c(\psi_N) \leq \gamma_N(c), \ \mathcal{E}(\psi_N)=A(\psi_N) \leq -c^2 \sigma_N'(c).$$

Since $A$ is coercive, here the set $H_\delta$ is uniformly bounded, and $I$ is clearly uniformly $C^{2,\alpha}$ in bounded sets. By Proposition \ref{trick}, 3), we have that:

$$ \sup \{dim \, Y: \ Y \subset H_0^1(\Omega_N): (I_N^c)''(\psi_N)(\phi,\phi) < 0 \ \forall \ \phi \in Y \} \leq 1.$$

We are now concerned with passing to the limit as $N \to +\infty$. In order to control the energy of the solutions $\psi_N$, we reason as follows.  

Recall Proposition \ref{min-max}, b), and that $\sigma_N(c)$ is decreasing in $c$; then, for $N\geq N_0$,

\begin{equation} \label{prima}
\frac{\chi(c_0)}{c_0} \geq  \frac{\gamma_N(c_0)}{c_0} \geq \frac{\gamma_N(c_0)}{c_0} -\frac{\gamma_N(c)}{c} \geq \int_{c_0}^c |\sigma_N'(s)| \, ds .\end{equation}

Let us now define the sets $$D_{N,M}= \{c \in (c_0, \sqrt{2}): \sigma_N \mbox{ is not  differentiable or } |\sigma_N'(c)|> M   \},$$ for all $N$, $M \in \N$, $N \geq N_0$. Clearly the sets $D_{N,M}$ also depend on $c_0$, but we avoid to make that dependence explicit in the notation for the sake of clarity.

By \eqref{prima}, we have that $$|D_{N,M}| \leq \frac{\chi(c_0)}{c_0 M}.$$

The following claim is the key to be able to pass to the limit for enlargins slabs preserving bounded energy.

\medskip {\bf Claim:}  The set $D(c_0)$ defined as:

$$ D(c_0) = \displaystyle \cap_{M \in \N} \cup_{N \geq N_0} \cap_{k \geq N} D_{k,M}$$

has $0$ measure.

\medskip Indeed, the sets $\cap_{k \geq N} D_{k,M}$ are increasing in $N$, and all of them satisfy that have measure smaller than $\frac{\chi(c_0)}{c_0 M}$. Hence the same estimate works also for the union in $N$. Now, $D(c_0)$ is a set given by an intersection of sets of measure $\frac{\chi(c_0)}{c_0 M}$, $M \in \N$, so that $D(c_0)$ has $0$ measure.

\medskip Finally, we can set $$D=  \cup_{n=1}^{+\infty} \, D(1/n),$$ which has also $0$ measure.

\bigskip

Let us define $E= (0,\sqrt{2}) \setminus D$, and take $c \in E$. We can fix $n \in N$ such that $c_0=1/n< c$, and $c \notin D(c_0)$. Then, there exists $M(c)$ and a subsequence $k(N)$ such that $|\sigma_{k(N)}'(c)| \leq M(c)$. By Proposition \ref{trick}, for any of these slabs $\Omega_{k(N)}$ there exists a Palais-Smale sequence with bounded energy. According to Proposition \ref{prop:12}, this gives rise to a solution $\psi_{k(N)} \in 1+ H_0^1(\Omega_{k(N)})$ such that:

$$I_{k(N)}^c(\psi_{k(N)}) \leq \gamma_{k(N)}(c), \ \mE(\psi_{k(N)} ) \leq M(c) c^2.$$

This concludes the proof of Proposition \ref{prop-crucial}.

\section{Proof of Theorem \ref{teo:almost}}

In view of Proposition \ref{prop-crucial}, we aim to conclude the proof of Theorem \ref{teo:almost} by passing  to the limit. This is indeed possible thanks to Lemma \ref{lem:bound}. However, we need to face two difficulties: vanishing of solutions (that is, the limit solution is trivial) and concentration near the boundary (that is, the limit solution is defined in a half-space). The purpose of this section is to exclude both scenarios. 

Next result deals with the question of vanishing and is actually a version of Proposition 2.4 of \cite{bgs-cmp} adapted to problem \eqref{eq:GPstrip}.
\begin{prop}\label{lem:lemmacrucslab}
	Let $\psi$ be a finite energy solution of \eqref{eq:GPstrip} with $0<c<\sqrt{2}$, then
	$$\|1-|\psi|\|_{L^{\infty}(\Omega_N)}\geq \frac{2}{5}(1-\frac{c}{\sqrt{2}}).$$
\end{prop}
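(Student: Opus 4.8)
The plan is to argue by contradiction. Suppose the asserted bound fails, so that $\|1-|\psi|\|_{L^\infty(\Omega_N)}<\frac{2}{5}\bigl(1-\frac{c}{\sqrt 2}\bigr)=:\mu$. Since $\mu<1$, the modulus $\rho=|\psi|$ stays in the interval $(1-\mu,1+\mu)$, so $\psi$ never vanishes on $\overline{\Omega_N}$. As $\Omega_N$ is convex, hence simply connected, I can lift $\psi=\rho\,e^{i\theta}$ with smooth real $\rho,\theta$, normalised so that $\rho\equiv1$ and $\theta\equiv 0$ on $\partial\Omega_N$ (each of the two walls $x_1=\pm N$ is connected and carries the datum $\psi=1$).

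Next I would split \eqref{eq:GPstrip} into its modulus and phase parts. Mirroring the square completion $|\nabla\theta|^2+c\,\theta_{x_1}=|\nabla\theta+\frac c2 e_1|^2-\frac{c^2}4$ behind the elementary inequality $cxy\le\frac{c^2}4x^2+y^2$ used in Proposition \ref{min-max}, the real part becomes
\[ \Delta\rho=\rho\Big(\rho^2-1-\tfrac{c^2}4+\big|\nabla\theta+\tfrac c2 e_1\big|^2\Big), \]
while the imaginary part is the weighted divergence-free relation
\[ \operatorname{div}\!\Big(\rho^2\big(\nabla\theta+\tfrac c2 e_1\big)\Big)=0 . \]
The factor $1-\frac{c^2}2=\bigl(1-\frac c{\sqrt2}\bigr)\bigl(1+\frac c{\sqrt2}\bigr)$ produced by this completion is what ultimately yields the $\bigl(1-\frac c{\sqrt2}\bigr)$ in the statement. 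With the phase gradient under control (see below), the bracket in the modulus equation is $\rho^2-1$ up to a small error, so $\Delta\rho\approx\rho(\rho^2-1)=-\rho(1-\rho^2)$; a maximum-principle comparison with the boundary value $\rho\equiv1$ then forces $\rho$ close to the constant $1$ and, in the limiting case, $\rho\equiv1$. Once $\rho\equiv1$ the phase equation makes $\theta$ essentially harmonic with vanishing boundary data, whence $\psi$ is constant, contradicting nontriviality (by Lemma \ref{lem:poho2}, $I(\psi)=\frac{2}{d-1}A(\psi)>0$ unless $\psi$ is a constant of modulus $1$). Quantifying each ``approximately'' with the bound $\rho^2\le 1+\frac{c^2}4$ of Lemma \ref{lem:bound} gives the explicit threshold $\frac25\bigl(1-\frac c{\sqrt2}\bigr)$; this is consistent with the exact $1$D dark soliton, for which $\rho(0)=c/\sqrt2$ and hence $\|1-|\psi|\|_\infty=1-\frac c{\sqrt2}$.

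The hard part will be controlling the phase gradient $W:=\nabla\theta+\frac c2 e_1$. The two equations are genuinely coupled, and a bare maximum principle applied to $\rho^2$ only reproduces Farina's upper bound $\rho^2\le1+\frac{c^2}4$, never a lower oscillation bound (the constant $\psi\equiv1$ shows that none can hold without using nonconstancy). To close the argument I would extract the needed control of $W$ from the divergence-free relation together with the boundary condition and the transverse decay of Lemma \ref{lem:decay}: when $\rho$ is uniformly close to $1$, the field $W$ is nearly curl-free (it differs from $\nabla\theta$ by a constant) and divergence-free and tends to $\frac c2 e_1$, with tangential part vanishing on the walls, so that $|W|^2$ is forced uniformly close to $\frac{c^2}4$ and the error term in the modulus equation becomes negligible. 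Making this estimate quantitative and uniform in $N$ is the technical core; after that, the maximum principle for $\rho$ and the nonconstancy supplied by Lemma \ref{lem:poho2} deliver the contradiction.
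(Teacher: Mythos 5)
Your outline contains a genuine gap, and it sits exactly where you placed the phrase ``the technical core'': the claimed pointwise control of $W=\nabla\theta+\frac c2 e_1$. In the contradiction regime the only information available is $|1-\rho|<\mu$ with $\mu=\frac25\bigl(1-\frac c{\sqrt2}\bigr)$, and for small $c$ this $\mu$ is of order one -- there is \emph{no small parameter}. The relation $\operatorname{div}(\rho^2 W)=0$ with $\rho^2$ allowed to oscillate by an $O(1)$ amount does not make $W$ ``nearly divergence-free and curl-free'' in any quantitative sense, and the defect $|W|^2-\frac{c^2}4$ entering $\Delta\rho=\rho\bigl(\rho^2-1-\frac{c^2}4+|W|^2\bigr)$ has no sign and no pointwise smallness, so the comparison/maximum-principle step cannot run: with an uncontrolled $O(1)$ forcing one cannot conclude $\rho\equiv1$, nor anything quantitative. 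Two further problems: the maximum principle on the unbounded slab needs behavior at infinity, and the decay estimates of Lemma \ref{lem:decay} that you invoke are proved only for finite energy solutions of \eqref{eq:GPellip} on $\R^d$, not for the slab problem \eqref{eq:GPstrip} -- the paper explicitly notes that this unavailability is why its proof replaces decay by integrability arguments. Finally, nothing in your sketch actually produces the constant $\frac25$; ``quantifying each approximately'' is precisely the content that is missing, and the fact that the bound must degenerate as $c\to\sqrt2$ (small-amplitude transonic waves) shows the estimate has to come from a genuinely quantitative interplay, not from a perturbative maximum-principle picture.

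For contrast, the paper's proof is a global integral argument with no maximum principle and no pointwise control of $\nabla\theta$. Setting $\delta=\|1-|\psi|\|_{L^\infty(\Omega_N)}$ and assuming $\delta<\frac12$ (otherwise the bound is trivial), one lifts $\psi=\rho e^{i\theta}$ (with $\rho-1,\theta\in H^1_0(\Omega_N)$, Lemma \ref{prop:rhotheta} -- which, incidentally, is where your normalization $\theta=0$ on both walls needs the winding argument you glossed over) and uses three identities obtained by multiplying the equations of the $(\rho,\theta)$ system by $\theta$ and by $\rho^2-1$ and integrating by parts on truncated slabs: \eqref{eq:MOwv}, \eqref{eq:MOwv2} and \eqref{eq:MOwv3}. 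The single pointwise tool is \eqref{eq:pointcruc}, $\bigl|(\rho^2-1)\partial_{x_1}\theta\bigr|\le\frac{\sqrt2}{\rho}\,e(\rho,\theta)$, which lets every term on the right of \eqref{eq:MOwv3} be absorbed into multiples of the total energy, yielding $\int e\le\bigl(\frac{c}{\sqrt2(1-\delta)}+\frac{3\delta}{2(1-\delta)}\bigr)\int e$. Since a nontrivial solution has positive energy, the coefficient must be at least $1$, which is exactly $\delta\ge\frac25\bigl(1-\frac c{\sqrt2}\bigr)$ -- this linear inequality in $\delta$ is where the $\frac25$ comes from. If you want to rescue your approach, the repair is to abandon pointwise control of $W$ entirely and extract the needed control of the phase from these integral identities, as the paper does.
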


The proof is actually the same as in \cite{bgs-cmp}, with one difference: when integrating by parts, the authors use the decay estimates of the solutions to avoid contributions from infinity, and those estimates are available only for the Euclidean space case. Instead, here we use integrability bounds.

In our argument we will use liftings of the solutions, that is, we write  $\psi = \rho e^{i \theta}$. The existence of liftings is always guaranteed, for instance, if $|\psi(x)|\neq 0$ for all $x$.

\subsection{Liftings for solutions in $\Omega_{N}$ without vortices}

We consider here solutions without vortices, i.e. that do not vanish. The energy density  is given by the following formula
\[
e(\rho, \theta)=\frac 12  \left(|\nabla \rho|^2 + |\nabla \theta|^2\rho^2  \right)+\frac 14  \left(1-|\rho|^2\right)^2 
\]
and the associated energy is 
\[
\mathcal{E}(\rho, \theta):=\int_{\Omega_N} e(\rho, \theta) 
\]
By using the fact that $\psi=\rho e^{i \theta}$ is a solution of \eqref{eq:GPellip}, $\rho, \theta$ fulfill the following system of equations
\begin{equation}\label{eq:GPstripvarrp}
\left\{ \begin{aligned}
&\frac{c}{2}\partial_{x_1} \rho^2+\nabla \cdot(\rho^2\nabla \theta)=0,\\
& c \rho \partial_{x_1}\theta-\Delta \rho- \rho(1-\rho^2)+\rho|\nabla \theta|^2=0. 
\end{aligned}\right..
\end{equation}
The following pointwise inequality (Lemma 2.3 in BGS)
\begin{equation}\label{eq:pointcruc}
\left | (\rho^2-1)\partial_{x_1}\theta \right|\leq\frac{\sqrt{2}}{\rho} e(\rho, \theta)
\end{equation}
that holds for arbitary $C^1$ scalar function that can be written as  $\psi=\rho e^{i \theta}$ (not necessary being a solution) are crucial in the sequel.

\begin{lem}\label{prop:rhotheta}
Let $\psi$ be a vortexless finite energy solution in $\Omega_N$, then
$1-\rho$ and $\theta$ belong to $H^1_0(\Omega_N)$.
\end{lem}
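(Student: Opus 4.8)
The plan is to exploit two facts: that finite energy forces $\rho=|\psi|$ to approach $1$ as $|\tilde x|\to\infty$, and that, being vortexless, $\psi$ admits a smooth single-valued lifting on the (simply connected) slab $\Omega_N$. Writing $|\nabla\psi|^2=|\nabla\rho|^2+\rho^2|\nabla\theta|^2$, finite energy gives at once $\nabla\rho\in L^2(\Omega_N)$, $\int_{\Omega_N}\rho^2|\nabla\theta|^2<\infty$ and $\int_{\Omega_N}(1-\rho^2)^2<\infty$. The crucial preliminary step, and the main obstacle, is to upgrade these integral bounds to a \emph{uniform} lower bound $\rho\ge\rho_0>0$ on all of $\Omega_N$; this is precisely what lets us discard the weight $\rho^2$ in front of $|\nabla\theta|^2$. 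To obtain it I would show $\rho\to1$ at infinity: by Lemma \ref{lem:bound} the function $\rho^2=|\psi|^2$ is uniformly Lipschitz, so if $|1-\rho^2(x_n)|\ge\e$ along a sequence with $|x_n|\to\infty$, then $|1-\rho^2|\ge\e/2$ on balls $B(x_n,r)$ of fixed radius $r$; keeping a subsequence of pairwise far apart points, at least half of each such ball lies in $\Omega_N$, and summing these contributions contradicts $\int_{\Omega_N}(1-\rho^2)^2<\infty$. Hence $\rho\ge 1/2$ outside some region $\{|\tilde x|\le R\}$, while on the compact set $\overline{\Omega_N}\cap\{|\tilde x|\le R\}$ the continuous function $\rho$ is strictly positive ($\rho=1$ on $\partial\Omega_N$, nonzero inside by the vortexless hypothesis), hence bounded below. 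This yields $\rho\ge\rho_0>0$.

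With the lower bound in hand the assertion for $\rho$ is immediate. Since $1+\rho\ge1$, one has $(1-\rho)^2\le(1-\rho^2)^2\in L^1$, so $1-\rho\in L^2(\Omega_N)$; together with $\nabla(1-\rho)=-\nabla\rho\in L^2$ this gives $1-\rho\in H^1(\Omega_N)$. As $\rho=1$ on $\partial\Omega_N$, the trace of $1-\rho$ vanishes, and (the slab having smooth boundary) its extension by zero lies in $H^1(\R^d)$, whence $1-\rho\in H^1_0(\Omega_N)$. For the phase, the lower bound gives $|\nabla\theta|^2\le\rho_0^{-2}\rho^2|\nabla\theta|^2$, so that $\nabla\theta\in L^2(\Omega_N)$.

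It remains to show $\theta\in H^1_0(\Omega_N)$, which requires choosing the correct lifting and checking its trace. On each connected boundary hyperplane $\{x_1=\pm N\}$ one has $\psi=1$, so the continuous $\theta$ is constant in $2\pi\Z$ there; subtracting a constant I normalize $\theta\equiv0$ on $\{x_1=N\}$, and then $\theta\equiv 2\pi k_-$ on $\{x_1=-N\}$ for some integer $k_-$. To see that $k_-=0$, note that for fixed $\tilde x$ one has $2\pi k_-=-\int_{-N}^{N}\partial_{x_1}\theta(s,\tilde x)\,ds$; since, by Lemma \ref{lem:bound} together with $\rho\ge\rho_0$, the field $\nabla\theta=\mathrm{Im}(\overline\psi\,\nabla\psi)/\rho^2$ is itself uniformly Lipschitz, the same disjoint-ball argument applied to $\int_{\Omega_N}|\nabla\theta|^2<\infty$ forces $|\nabla\theta|\to0$ as $|\tilde x|\to\infty$, so the integral tends to $0$ and the constant $k_-$ must vanish. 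Thus $\theta$ has zero trace on $\partial\Omega_N$; finally, the one-dimensional Poincar\'e inequality in the $x_1$ direction (which underlies \eqref{eq:poincare}) gives $\int_{\Omega_N}|\theta|^2\le C_N\int_{\Omega_N}|\partial_{x_1}\theta|^2<\infty$, so $\theta\in H^1(\Omega_N)$ with vanishing trace and therefore $\theta\in H^1_0(\Omega_N)$. As indicated, the delicate points are the global lower bound on $\rho$ over the non-compact slab and the matching of the boundary phases; everything else follows routinely from the finite-energy and uniform-derivative bounds.
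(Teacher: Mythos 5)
Your proof is correct, and its skeleton matches the paper's (uniform lower bound on $\rho$, then $\nabla\theta\in L^2$, then zero boundary phase, then Poincar\'e); the genuine divergence is at the crucial step where the integer winding $k_-$ between the two faces of the slab is killed. The paper argues purely variationally: if $\theta$ jumped by $2\pi$ across the slab, then on every line $\{\tilde x=\mathrm{const}\}$ the phase would cross $\pi/2$, forcing the real part $u=\rho\cos\theta$ to vanish at some $y\in(-N,N)$; since $u(-N,\tilde x)=1$, Cauchy--Schwarz gives $\int_{-N}^{N}|\partial_{x_1}u(s,\tilde x)|^2\,ds\ge 1/(2N)$ for \emph{every} $\tilde x$, and Fubini makes $\int_{\Omega_N}|\partial_{x_1}u|^2$ infinite, contradicting finite energy. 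Your route instead upgrades $\nabla\theta\in L^2(\Omega_N)$, together with the uniform Lipschitz bound on $\nabla\theta$ (which requires Lemma \ref{lem:bound} \emph{and} the lower bound $\rho\ge\rho_0$ that you rightly establish first), to uniform decay of $\nabla\theta$ as $|\tilde x|\to\infty$ via the disjoint-ball argument, and then lets the constant quantity $2\pi k_-=-\int_{-N}^{N}\partial_{x_1}\theta(s,\tilde x)\,ds$ tend to $0$. Both are valid: the paper's winding obstruction is more elementary and more general, since it uses only finite energy, the lifting, and the boundary condition (so it would rule out nonzero winding for any finite-energy map, not just solutions), whereas yours leans on elliptic regularity but yields the stronger soft conclusion that $\nabla\theta$ decays at infinity. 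Two further, minor, differences: for $1-\rho\in L^2$ you exploit the potential term via $(1-\rho)^2\le(1-\rho^2)^2$, while the paper gets it from the Poincar\'e inequality \eqref{eq:poincare} applied with $\nabla\rho\in L^2$ and zero trace; and your disjoint-ball derivation of $\rho\to1$ at infinity (combined with compactness and vortexlessness on the bounded part) replaces the paper's one-line deduction from $\rho-1\in H^1_0(\Omega_N)$ and $\nabla\rho\in L^\infty$. These discrepancies are cosmetic; your argument is complete.
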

\begin{proof}
Let us notice that for a vortexless finite energy solution  

\begin{equation} \label{energy-lifting} \mathcal{E}(\psi)=\int_{\Omega_N }|\nabla \rho|^2 + \rho^2 |\nabla \theta|^2 < +\infty \end{equation}
which implies, by means of Poincar\'{e} inequality,  that $\rho-1\in H^1_0(\Omega_N)$. Since $\nabla \rho$ bounded in $L^{\infty}$ (by Lemma \ref{lem:bound}), one concludes that $\rho(x) \to 1$ uniformly as $|x| \to +\infty$. Hence $\rho(x) \geq \rho_0 >0$ for all $x \in \Omega_N$. Again by \eqref{energy-lifting}, $\nabla \theta \in L^2(\Omega_N)$. To conclude the proof we shall prove that $\theta=0$ on $\partial \Omega_N$ which will allows to use Poincar\'{e} inequality.

\medskip 
Let us assume that $\theta=0$ if $x_1=-N$ and that $\theta=2 \pi$  if $x_1=N$. For any $\tilde{x} \in \R^{d-1}$ there exists $y \in (-N,N)$ such that $u(y, \tilde x)=0$.
We get
$$1=| u(y, \tilde x)-u(-N, \tilde x)|^2=\left| \int_{-N}^{y} \partial_{x_1} u(s,y) ds \right|^2\leq 2N  \int_{-N}^{N} \left| \partial_{x_1} u(s,y)\right|^2 ds.$$
By Fubini we get
$$ \int_{\Omega_N} \left| \partial_{x_1} u\right|^2  =\int_{\R^{d-1}}\left(  \int_{-N}^{N} \left| \partial_{x_1} u(s,y)\right|^2 ds\right) dy = + \infty,$$
which implies that the energy is infinity.

\end{proof}

From \eqref{eq:GPstripvarrp} we derive three useful identities that are important in the sequel. These identities have been stablished in \cite[Lemmas 2.8, 2.10]{bgs-cmp} for solutions in the whole euclidean space: here we adapt these arguments to the problem in the domain $\Omega_N$.

\begin{lem}\label{prop:stimmoment}
Let $\psi$ be a vortexless finite energy solution of \eqref{eq:GPstrip}. Then:

\begin{equation}\label{eq:MOwv}
\mathcal{P}(\psi)=\frac 12 \int_{\Omega_N}(1-\rho^2)\partial_{x_1}\theta.
\end{equation}

\begin{equation}\label{eq:MOwv2}
c \mathcal{P}= \int_{\Omega_N}\rho^2|\nabla \theta|^2.
\end{equation}
\begin{equation}\label{eq:MOwv3}
\int_{\Omega_N} \left(2 \rho|\nabla \rho|^2+\rho(1-\rho^2)^2\right)= c \int_{\Omega_N} \rho(1-\rho^2)\partial_{x_1}\theta+ \int_{\Omega_N} \rho(1-\rho^2)| \nabla \theta|^2.
\end{equation}
\end{lem}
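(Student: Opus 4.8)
My plan is to obtain each of the three identities by testing the polar system \eqref{eq:GPstripvarrp} against an appropriate multiplier and integrating over $\Omega_N$, with special care that every integration by parts is legitimate on the slab, which is unbounded in $\tilde{x}$. Throughout I would use Lemma \ref{prop:rhotheta}, which gives $1-\rho,\,\theta\in H_0^1(\Omega_N)$ (so $\nabla\rho,\nabla\theta\in L^2$, $1-\rho^2\in L^2$, and $\theta=0$, $\rho=1$ on $\partial\Omega_N$), together with the uniform derivative bounds of Lemma \ref{lem:bound}.

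For \eqref{eq:MOwv} I would start from the working definition $\mathcal{P}(\psi)=-\int_{\Omega_N}(u-1)\partial_{x_1}v$, which after one integration by parts in $x_1$ (the faces $x_1=\pm N$ contribute nothing, since $v=0$ there) becomes the genuinely convergent integral $\mathcal{P}(\psi)=\int_{\Omega_N} v\,\partial_{x_1}u$. Writing $\psi=\rho e^{i\theta}$ one has the pointwise identity $v\,\partial_{x_1}u=-\tfrac12\rho^2\partial_{x_1}\theta+\tfrac12\partial_{x_1}(uv)$. Since $\rho^2\partial_{x_1}\theta$ need not be integrable, I would work on the truncated boxes $\Omega_N\cap\{|\tilde{x}|<R\}$ and split $\rho^2=1-(1-\rho^2)$: integrating the exact derivatives $\partial_{x_1}(uv)$ and $\partial_{x_1}\theta$ first in $x_1$ makes them vanish by the boundary conditions, while the integrable term $(1-\rho^2)\partial_{x_1}\theta\in L^1$ converges as $R\to+\infty$, yielding \eqref{eq:MOwv}.

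The remaining two identities come from testing \eqref{eq:GPstripvarrp}. For \eqref{eq:MOwv2} I would test the continuity equation (first line of \eqref{eq:GPstripvarrp}) against $\theta$: the term $\int\theta\,\nabla\cdot(\rho^2\nabla\theta)$ integrates by parts to $-\int\rho^2|\nabla\theta|^2$, while $\tfrac c2\int\theta\,\partial_{x_1}\rho^2=\tfrac c2\int\theta\,\partial_{x_1}(\rho^2-1)$ integrates by parts (the lateral flux on $\{|\tilde{x}|=R\}$ has no $x_1$-component of the normal, and the faces $x_1=\pm N$ carry $\theta=0$) to $-\tfrac c2\int(\rho^2-1)\partial_{x_1}\theta=c\,\mathcal{P}$ by \eqref{eq:MOwv}; adding gives $c\mathcal{P}=\int\rho^2|\nabla\theta|^2$. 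For \eqref{eq:MOwv3} I would multiply the second line of \eqref{eq:GPstripvarrp} by $(1-\rho^2)$ and integrate; the only nonalgebraic step is $-\int(1-\rho^2)\Delta\rho=\int\nabla(1-\rho^2)\cdot\nabla\rho=-2\int\rho|\nabla\rho|^2$, where the boundary term $\int_{\partial\Omega_N}(1-\rho^2)\partial_n\rho$ vanishes on $x_1=\pm N$ because $1-\rho^2=0$ there. Rearranging reproduces \eqref{eq:MOwv3}.

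I expect the only real obstacle to be the rigorous justification of these integrations by parts on the unbounded slab: several natural densities ($\rho^2\partial_{x_1}\theta$, $\partial_{x_1}\theta$, $\theta\rho^2\nabla\theta$) are not individually in $L^1(\Omega_N)$. In \cite{bgs-cmp} this was resolved using the whole-space pointwise decay of Lemma \ref{lem:decay}, which is unavailable for solutions on $\Omega_N$; I would instead rely only on the integrability $\nabla\rho,\nabla\theta,1-\rho^2\in L^2(\Omega_N)$. Working on truncated boxes, the $x_1$-flux faces are controlled by the boundary conditions at $x_1=\pm N$, while the lateral fluxes on $\{|\tilde{x}|=R\}$ are eliminated along a suitable sequence $R_k\to+\infty$, using that each relevant $L^1$-density has finite integral and hence that the liminf of its boundary trace vanishes. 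Once these limits are secured, the three identities follow by elementary algebra, so this is the place where the \emph{integrability bounds} replace the decay estimates used in the Euclidean case.
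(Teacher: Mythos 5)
Your proposal is correct and follows essentially the same route as the paper: the same multipliers ($\theta$ for the continuity equation, $\rho^2-1$ for the second equation of \eqref{eq:GPstripvarrp}), the same use of Lemma \ref{prop:rhotheta} so that integrability in $L^1(\Omega_N)$ replaces the whole-space decay estimates, and the same device of truncating in $\tilde{x}$ and killing the lateral fluxes along a sequence of radii where the trace integral of the $L^1$ density vanishes (the paper uses boxes $\Omega_{N,M}$ and a sequence $M_n$). The only, purely cosmetic, difference is in \eqref{eq:MOwv}, where you first pass to $\int v\,\partial_{x_1}u$ and split off $\tfrac12\partial_{x_1}(uv)-\tfrac12\partial_{x_1}\theta$, while the paper isolates the exact derivative $\partial_{x_1}(\rho\sin\theta-\theta)$ directly; both are disposed of identically via the zero boundary values at $x_1=\pm N$.
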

\begin{proof}
	
Straightforward computation gives 
$$\mathcal{P}(\psi)=\frac 12 \int_{\Omega_N}\partial_{x_1}(\rho \sin \theta)-\rho^2\partial_{x_1}\theta=\frac 12 \int_{\Omega_N}\partial_{x_1}(\rho \sin \theta -\theta)+(1-\rho^2)\partial_{x_1}\theta.$$
We will prove that $\int_{\Omega_N}\partial_{x_1}(\rho \sin \theta -\theta)=0$. Thanks to Lemma \ref{prop:rhotheta} $\psi_1=(1-\rho^2)\partial_{x_1}\theta$ is integrable in $\Omega_N$ and hence we derive that $\psi_2=\partial_{x_1}(\rho \sin \theta -\theta)$ is integrable as well. By integration by parts together with Lemma \ref{prop:rhotheta} we get
$$\int_{\Omega_{N}} \psi_2 =\int_{\partial \Omega_{N} }\left(\rho \sin \theta -\theta\right)\eta_1 =0.$$

To get \eqref{eq:MOwv2} 
we multiply the first equation of \eqref{eq:GPstripvarrp} by $\theta$ and we integrate in $\Omega_{N,M}$, defined as

$$ \Omega_{N,M}=\left\{x\in \R^d, \ \ \ -N<x_1<N, \ |x_j|<M, \ \ 2 \leq j \leq d \right\} \subset \Omega_N.$$

By integrating by parts we obtain:

$$\frac{c}{2}\int_{\Omega_{N,M}} (1-\rho^2)\partial_{x_1}\theta-\int_{\Omega_{N,M}} \rho^2|\nabla \theta|^2  $$$$ =\int_{\partial \Omega_{N,M}} \theta \left ( \frac{c}{2} (1-\rho^2) \eta_1-  \rho^2 \nabla \theta \cdot \eta \right ).$$

Observe that by Lemma \ref{prop:rhotheta} all functions involved in the expression above belong to $L^1(\Omega_N)$, and recall that $\theta=0$ on $\partial \Omega_N$. Then, there exists a sequence $M_n$ such that 
$$\lim_{n \rightarrow \infty} \int_{\partial \Omega_{N,M_n}} \theta   \rho^2 \nabla \theta \cdot \eta=0.$$
This proves \eqref{eq:MOwv2}.
%If we now integrate  \eqref{eq:GPstripvarrp}  on $\Omega_{N,M}$ and integrate by parts we obtain that
%
%$$ 0=\int_{\partial \Omega_{N,M}}  \frac{c}{2} (1-\rho^2) \eta_1-  \rho^2 \nabla \theta \cdot \eta dx =\int_{\partial \Omega_{N,M}}   \rho^2 \nabla \theta \cdot \eta dx .$$

%using the fact the energy is finite,  we have that $\theta \frac{c}{2} (1-\rho^2)$  belongs to $W^{1,1}(\Omega_N)$.
%Indeed $||\theta (1-\rho^2)||_{L^1(\Omega_N)}\leq ||\theta ||_{L^2(\Omega_N)}||(1-\rho^2) ||_{L^2(\Omega_N)}<+\infty$ where we use the elementary inequality
%$(1-\rho^2)=(1-\rho)(1+\rho)$.  The term $\theta \rho^2 \nabla \theta$  belongs to $L^1(\Omega_N)$ as well. Therefore
%$$\lim_{n \rightarrow \infty }\int_{\partial \Omega_{N,M_n}} \theta \left ( \frac{c}{2} (1-\rho^2) \eta_1-  \rho^2 \nabla \theta \cdot \eta \right )dx=0.$$
By multiplying the second equation of \eqref{eq:GPstripvarrp} by $\rho^2-1$ and integrating over $\Omega_{N,M}$ by parts we obtain
$$\int_{\Omega_{N,M}} \left(2 \rho|\nabla \rho|^2+\rho(1-\rho^2)^2\right)+\int_{\partial \Omega_{N,M}} (1-\rho^2)\nabla \rho \cdot \eta = $$
$$=c \int_{\Omega_{N,M}} \rho(1-\rho^2)\partial_{x_1}\theta + \int_{\Omega_{N,M}} \rho(1-\rho^2)| \nabla \theta|^2.$$

Again by Lemma \ref{prop:rhotheta}, all functions involved in the above expression belong to $L^1(\Omega_N)$. Hence we can finde a sequence $M_n$ such that 
$$\lim_{n \rightarrow \infty} \int_{\partial \Omega_{N,M_n}}(1-\rho^2)\nabla \rho \cdot \eta=0.$$

This proves \eqref{eq:MOwv3} passing to the limit.
\end{proof}

\subsection{Proof of Proposition \ref{lem:lemmacrucslab}}
Let us call $\delta=||1-|\psi|||_{L^{\infty}(\Omega_N)}$. If $\delta>\frac 12>\frac{2}{5}(1-\frac{c}{\sqrt{2}})$ there is nothing to prove.  Let us suppose hence  that $\delta<\frac 12$ which implies that
  $\rho(x) \geq 1 - \delta >\frac 12 $ for any $x \in \Omega_N$. In particular $\psi$ admits a lifting $\psi = \rho e^{i \theta}$.
We notice that
$$4(1-\delta)\left(\int_{\Omega_N }\frac 12  |\nabla \rho|^2 +\frac 14  \left(1-|\rho|^2\right)^2\right)\leq\int_{\Omega_N } 2 \rho |\nabla \rho|^2 +  \rho \left(1-|\rho|^2\right)^2$$
and thanks to \eqref{eq:MOwv3} we get
\begin{equation}\label{eq:stimenerg}
\int_{\Omega_N}e(\rho, \theta)\leq \frac{1}{4(1-\delta)} \int_{\Omega_N} \rho(1-\rho^2) \left( c \partial_{x_1}\theta + | \nabla \theta|^2 \right)+\frac 12 \int_{\Omega_N}\rho^2|\nabla\theta|^2
\end{equation}
The strategy is to estimate r.h.s of \eqref{eq:stimenerg} using the pointwise bound given by \eqref{eq:pointcruc}.
We have, thanks to \eqref{eq:MOwv2} and  \eqref{eq:pointcruc}
$$ \frac{c}{4(1-\delta)} \int_{\Omega_N} \rho(1-\rho^2)\partial_{x_1}\theta+\frac 12 \int_{\Omega_N}\rho^2|\nabla \theta|^2\leq\left(\frac{\sqrt{2}c}{4(1-\delta)}+\frac{\sqrt {2}c}{4} \right)\int_{\Omega_N} e(\rho, \theta)$$
and hence
$$ \frac{c}{4(1-\delta)} \int_{\Omega_N} \rho(1-\rho^2)\partial_{x_1}\theta +\frac 12 \int_{\Omega_N}\rho^2|\nabla \theta|^2\leq \frac{c}{\sqrt{2}(1-\delta)}\int_{\Omega_N} e(\rho, \theta).$$
Now we claim that
\begin{equation}\label{eq:stimaimazz}
\left|\int_{\Omega_N}\rho(1-\rho^2)|\nabla \theta|^2 \right|\leq 6 \delta\int_{\Omega_N} e(\rho, \theta) 
\end{equation}
such that we obtain
$$\int_{\Omega_N} e(\rho, \theta) \leq (\frac{c}{\sqrt{2}(1-\delta)}+\frac{3\delta}{2(1-\delta)})\int_{\Omega_N} e(\rho, \theta).$$
The fact that $e(\rho, \theta )\geq 0$ and that $1- (\frac{c}{\sqrt{2}(1-\delta)}+\frac{3\delta}{2(1-\delta)})\leq 0$ if $\delta \geq \frac{2}{5}(1-\frac{c}{\sqrt{2}})$ concludes the proof.
Now we prove claim \eqref{eq:stimaimazz}. Notice that
$$\left|\int_{\Omega_N}\rho(1-\rho^2)|\nabla \theta|^2 \right| \leq \delta \int_{\Omega_N}\rho(1+\rho)|\nabla \theta|^2.$$
Now, $\rho(1+\rho)\leq 3 \rho^2$ if $\rho\geq \frac 12$, such that thanks to \eqref{eq:MOwv2}
$$\left|\int_{\Omega_N}\rho(1-\rho^2)|\nabla \theta|^2 \right| \leq 3 \delta \int_{\Omega_N}\rho^2|\nabla \theta|^2 \leq \frac{3 \delta c}{2}\int_{\Omega_N} (1-\rho^2)\partial_{x_1}\theta \leq 3 \sqrt{2} \delta c\int_{\Omega_N} e(\rho, \theta).$$
The proof of the claim ends noticing that $0<c<\sqrt{2}$.

\subsection{Conclusion of the proof of Theorem \ref{teo:almost}}

Take $c \in E$, $c_0 \in (0,c)$ and $N_0$ given by Proposition \ref{min-max}.

By Proposition \ref{lem:lemmacrucslab}, there exists $\xi_N \in \Omega_{k(N)}$ such that $|\psi_{k(N)}(\xi_N)-1| \nrightarrow 0 $, where $\psi_{k(N)}$ are the solutions given by Proposition \ref{prop-crucial}. By the uniform bounds of Lemma \ref{lem:bound}, we can use Ascoli-Arzel\`{a} Theorem to obtain in the limit ($C^k$ locally) a nontrivial solution of the Gross-Pitaevskii equation $\psi_c$. By Fatou Lemma, $\mathcal{E}(\psi_c) $ is finite. Moreover, by Remark \ref{remark morse}, $ind(\psi_c) \leq 1$.

Finally, by Fatou lemma and \eqref{eq:A bounded}, 

\begin{align*} I(\psi_c)= \frac{1}{d-1}  \sum_{j=2}^d \int |\partial_{x_j} \psi_c |^2 \leq  \frac{1}{d-1} \liminf_{N \to +\infty}  \sum_{j=2}^d \int |\partial_{x_j} \psi_{k(N)} |^2 \\  = \liminf_{N \to +\infty}  I_{k(N)} (\psi_{k(N)} ) \leq  \liminf_{N \to +\infty} \gamma_{k(N)}(c) \leq \chi(c_0). \end{align*}

\medskip If $d (\xi_N, \partial \Omega_{k(N)}) $ is bounded, up to a subsequence, our limit solution $\psi_c$ is defined in a half-space $\{x \in \R^d: \ x_1 > - m\}$ or $\{x \in \R^d: \ x_1 <  m\}$, for some $m>0$, and $\psi_c=1$ on its boundary. Instead, if $d (\xi_N, \partial \Omega_{k(N)}) $ is unbounded, the solution $\psi_c$ is defined in the whole euclidean space $\R^d$. In next proposition we rule out the first possibility, and this concludes the proof of Theorem \ref{teo:almost}.

\begin{prop} \label{prop:half} Let $\psi$ be a finite energy solution of the problem:
	
\begin{equation}\label{eq:half}
\begin{array}{rcl}i c\partial_{x_1}\psi +\Delta \psi+\left(1-|\psi|^2\right)\psi & = & 0  \ \  \text{ on } \R^d_+, \\
\psi & = & 1 \  \text{ on } \R^d_+, \end{array}
\end{equation}
where $\R^d_+ = \{x \in \R^d:\ x_1>0\}.$ Then $\psi=1$.
\end{prop}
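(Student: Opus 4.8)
The plan is to exploit the fact that the boundary datum $\psi\equiv 1$ is constant: this, combined with the flat geometry of the half-space, forces the normal derivative of $\psi$ to vanish on $\partial\R^d_+$, after which a unique continuation argument finishes the proof. The main tool is a Rellich--Pohozaev identity obtained by testing \eqref{eq:half} against $\partial_{x_1}\bar\psi$.

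First I would multiply \eqref{eq:half} by $\partial_{x_1}\bar\psi$, take real parts and integrate over $\R^d_+$. The first order term disappears pointwise, since $\mathrm{Re}(ic\,\partial_{x_1}\psi\,\partial_{x_1}\bar\psi)=\mathrm{Re}(ic|\partial_{x_1}\psi|^2)=0$, so no $c$-dependence survives. The remaining integrand is a pure divergence $\mathrm{div}\,V$, with $V=\mathrm{Re}(\nabla\psi\,\partial_{x_1}\bar\psi)-\big(\tfrac12|\nabla\psi|^2+\tfrac14(1-|\psi|^2)^2\big)e_1$. Hence the bulk integral reduces to a flux through $\partial\R^d_+$ (plus contributions at infinity, addressed below). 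On $\{x_1=0\}$ the condition $\psi\equiv 1$ kills the tangential derivatives, so there $|\nabla\psi|^2=|\partial_{x_1}\psi|^2$ and $(1-|\psi|^2)^2=0$; with outward normal $\nu=-e_1$ the flux $V\cdot\nu$ on the boundary equals $-\tfrac12|\partial_{x_1}\psi|^2$. The identity therefore collapses to $\int_{\{x_1=0\}}|\partial_{x_1}\psi|^2\,d\tilde x=0$, i.e. $\partial_{x_1}\psi=0$ on $\partial\R^d_+$.

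Once the full Cauchy data $\psi=1$, $\partial_{x_1}\psi=0$ is available on the non-characteristic hyperplane $\{x_1=0\}$, I would conclude $\psi\equiv 1$ by unique continuation. Setting $w=\psi-1$, the function $w$ solves a linear elliptic equation $\Delta w+ic\,\partial_{x_1}w+a(x)w+b(x)\bar w=0$ with analytic coefficients, because the nonlinearity $N(z)=(1-|z|^2)z$ is polynomial and $\psi$ is analytic by Lemma \ref{lem:bound} together with analytic elliptic regularity up to the boundary, and $w$ has vanishing Cauchy data on $\{x_1=0\}$. Holmgren's theorem then gives $w\equiv 0$ near the hyperplane, and interior analyticity propagates this to all of $\R^d_+$. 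Equivalently, since $\partial_{x_1}\psi=0$ on the boundary, the map $x\mapsto\overline{\psi(-x_1,\tilde x)}$ glues $C^1$-ly with $\psi$ across $\{x_1=0\}$ into an entire analytic solution vanishing to first order (relative to $1$) on a hyperplane, to which the same argument applies.

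The step that requires care is justifying the divergence theorem in the first part: the multiplier $\partial_{x_1}\bar\psi$ is not compactly supported, so I must rule out boundary contributions as $|\tilde x|\to\infty$ and as $x_1\to+\infty$. Here I would use the finite energy, which makes every term entering $V$ integrable, and argue on truncated domains $\Omega_{N,M}\cap\{0<x_1<L\}$, letting $M$ and $L$ tend to infinity along suitable sequences so that the lateral and far fluxes vanish --- exactly the cutoff device already employed in Lemma \ref{prop:stimmoment}. This decay and integrability bookkeeping is the only genuinely technical point; the algebraic core, the $c$-independent identity $\int_{\{x_1=0\}}|\partial_{x_1}\psi|^2=0$, is clean and self-contained.
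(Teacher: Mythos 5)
Your proposal is correct and takes essentially the same route as the paper's proof (which follows Esteban--Lions): you test the equation against $\partial_{x_1}\bar\psi$, observe that the transport term drops out since $\mathrm{Re}\,(ic|\partial_{x_1}\psi|^2)=0$, integrate by parts to collapse everything onto the boundary flux $\int_{\partial\R^d_+}|\partial_{x_1}\psi|^2=0$, and conclude by unique continuation from the vanishing Cauchy data. Your extra care on the two points the paper treats briefly --- justifying the divergence theorem (the paper does this via $D^2\psi\in L^2(\R^d_+)$ from finite energy plus standard regularity, rather than your cutoff sequences) and spelling out unique continuation via Holmgren or reflection --- only makes explicit what the paper leaves implicit.
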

	
\begin{proof}
The proof follows well-known ideas that date back to \cite{el}. 
If $\psi$ is a finite energy solution, then $\nabla \psi$ and $(1-|\psi|^2)$ are functions in $L^2(\R^d_+)$. Since $\psi$ is in $L^\infty(\R^d_+)$ and is a strong solution, standard regularity results allow us to conclude that $D^2 \psi$ belongs to $L^2(\R^d_+)$. Hence we can multiply equation \eqref{eq:half} by $\partial_{x_1}\psi$ and integrate by parts, obtaining:

$$ c \int_{\R^d_+} \lan  (i \partial_{x_1}\psi) , \partial_{x_1}\psi \ran =0;$$

$$\int_{\R^d_+} \lan \Delta \psi,  \partial_{x_1}\psi \ran = \int_{\partial \R^d_+} \lan (\nabla \psi \cdot \nu ), \partial_{x_1} \psi \ran  - \int_{\R^d_+}  \frac{1}{2} \partial_{x_1} \left ( |\nabla \psi|^2 \right ) $$ $$= - \int_{\partial \R^d_+} | \partial_{x_1} \psi|^2  + \frac{1}{2}   \int_{\partial \R^d_+} |\partial_{x_1} \psi|^2= -\frac{1}{2}   \int_{\partial \R^d_+} |\partial_{x_1} \psi|^2; $$

$$ \int_{\R^d_+}  \left(1-|\psi|^2\right)\lan \psi, \partial_{x_1}\psi \ran = - \frac 1 4 \int_{\R^d_+}  \partial_{x_1} \left ( (1- |\psi|^2)^2 \right )=0. $$

These computations imply that:

$$\int_{\partial \R^d_+} |\partial_{x_1} \psi|^2 =0.$$

In other words, $\partial_{x_1} \psi=0$ in $\partial \R^d_+$. By unique continuation, we conclude that $\psi=1$.

\end{proof}

\begin{remark} \label{remark halfspace}
	
The proof of Proposition \ref{prop:half} breaks down if we consider the half-space $\{ x \in \R^d:\ x_j >0 \}$, if $j>1$. The reason is that we do not know if:

$$ \int_{\R^d_+} \lan (i \partial_{x_1}\psi) , \partial_{x_j}\psi \ran$$
may cancel. This is one of the reasons why we choose slabs as approximating domains, instead of expanding balls, for instance (a choice that would have had advantages from the point of view of compactness). The second reason is that in $\Omega_N$ the Pohozaev-type identity given in Lemma \ref{lem:poho2} does not involve boundary terms.
	
\end{remark}

\section{Proof of Theorem \ref{teo:3}}

In this section we prove the compactness criterion given in \ref{teo:3}. We start by the following result, which is independent of the dimension:

\begin{prop} 	\label{ascoli}
Let $d=2$ or $3$, $c_n \to c$, $c_n \in E$ where the set $E$ is given by Theorem \ref{teo:almost}. Let $\psi_n$ be the sequence of solutions provided by that theorem. Then there exists $\xi_n \in \R^d$ such that $\psi_n(\cdot - \xi_n)$ converges locally  in $C^k$ (up to a subsequence) to a nontrivial solution $\psi_0$ of \eqref{eq:GPellip}.	

\end{prop}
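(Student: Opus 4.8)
The plan is to combine the uniform interior estimates of Lemma \ref{lem:bound} with a uniform lower bound on the oscillation of $\psi_n$, so that after a suitable recentering the sequence is precompact in $C^k_{loc}$ and the limit is forced to be nontrivial. The key observation is that the full-space solutions $\psi_n$ coming from Theorem \ref{teo:almost} satisfy the derivative bounds $\|D^k \psi_n(x)\| \leq C_k$ with constants that are \emph{uniform in $n$}: these depend only on the $L^\infty$ bound $\sqrt{1+c_n^2/4}$ and on the elliptic constants associated with the speed $c_n$, both of which stay bounded since $c_n \to c \in (0,\sqrt 2)$.

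First I would record the non-vanishing estimate. Since each $\psi_n$ is a finite energy solution of \eqref{eq:GPellip} with speed $c_n$, the whole-space analogue of Proposition \ref{lem:lemmacrucslab} (namely Proposition 2.4 of \cite{bgs-cmp}) gives
$$\|1-|\psi_n|\|_{L^\infty(\R^d)} \geq \frac{2}{5}\Big(1-\frac{c_n}{\sqrt 2}\Big).$$
Because $c_n \to c < \sqrt 2$, for $n$ large we have $c_n \leq (c+\sqrt 2)/2 < \sqrt 2$, and hence there is $\delta_0>0$, independent of $n$, with $\|1-|\psi_n|\|_{L^\infty(\R^d)} \geq \delta_0$. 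Consequently I can pick $\xi_n \in \R^d$ with $|\,1-|\psi_n(-\xi_n)|\,| \geq \delta_0/2$, and set $\tilde\psi_n = \psi_n(\cdot - \xi_n)$, so that $|\,1-|\tilde\psi_n(0)|\,| \geq \delta_0/2$.

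By translation invariance of \eqref{eq:GPellip}, each $\tilde\psi_n$ is again a solution with speed $c_n$, and by Lemma \ref{lem:bound} the family $\{\tilde\psi_n\}$ is bounded in $C^{k+1}(\R^d)$ uniformly in $n$. A diagonal argument over an exhaustion of $\R^d$ by balls, together with the Ascoli-Arzel\`a theorem, then yields a subsequence with $\tilde\psi_n \to \psi_0$ in $C^k_{loc}(\R^d)$. Since $C^2_{loc}$ convergence suffices to pass to the limit in the equation and $c_n \to c$, the limit $\psi_0$ solves \eqref{eq:GPellip} with speed $c$. Finally $|\,1-|\psi_0(0)|\,| = \lim_n |\,1-|\tilde\psi_n(0)|\,| \geq \delta_0/2 > 0$, so $\psi_0 \not\equiv 1$ is nontrivial, which is exactly the claim.

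The genuinely delicate point is not the compactness --- that is routine once the uniform $C^{k+1}$ bound is in hand --- but rather guaranteeing that the limit does not degenerate to the constant $1$. This is precisely what the non-vanishing estimate provides, and it is here that the subsonic hypothesis $c<\sqrt 2$ is essential, since the lower bound $\frac{2}{5}(1-c_n/\sqrt 2)$ collapses as $c_n \uparrow \sqrt 2$. I would stress that this proposition does \emph{not} assert finiteness of the energy of $\psi_0$: Fatou only controls the local energy, and the possible loss of energy at infinity is exactly the obstruction that Theorems \ref{teo:3} and \ref{teo:all} must overcome by more refined arguments.
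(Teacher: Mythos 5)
Your proof is correct and follows essentially the same route as the paper: the non-vanishing estimate from Proposition 2.4 of \cite{bgs-cmp} (the whole-space counterpart of Proposition \ref{lem:lemmacrucslab}) supplies points where $|1-|\psi_n||$ is bounded below uniformly in $n$, and Lemma \ref{lem:bound} together with Ascoli--Arzel\`a yields $C^k_{loc}$ convergence to a nontrivial solution. Your extra remarks --- the uniformity in $n$ of the constants in Lemma \ref{lem:bound} as $c_n \to c$, and the caution that finiteness of the energy of $\psi_0$ is not asserted at this stage --- are accurate refinements of the same argument.
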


\begin{proof} Let $\psi$ be a finite energy solution of \eqref{eq:GPellip} with $0<c<\sqrt{2}$. Then there exists $\e= \e(c) >0$ such that
	\begin{equation} \label{puff} \|1-|\psi|\|_{L^{\infty}(\Omega_N)}\geq \e. \end{equation}

Statement \eqref{puff} is just  Proposition 2.4 of \cite{bgs-cmp}. Compare it with Proposition \ref{lem:lemmacrucslab}, which is nothing but its version for problem \eqref{eq:GPstrip} (with a slight change of the constants).

Then, there exists $\xi_n$ such that $ | 1- |\psi_n(\xi_n)||> \e$ for some fixed $\e>0$. By Lemma \ref{lem:bound} we can use Ascoli-Arzel\`{a} Theorem to obtain that $\psi_n(\cdot - \xi_n)$ converges locally in $C^k$ to a nontrivial solution $\psi_0$ of \eqref{eq:GPellip}.	
	
\end{proof}

The main problem to conclude the proof of Theorem \ref{teo:3} or \ref{teo:all} is to assure that $\psi_0$ has finite energy. Let us point out that the boundedness of the energy cannot be deduced only by using the Pohozaev identities given in Lemmas \ref{lem:poho}, \ref{lem:poho2}.

Observe that since $I_{c_n}(\psi_n )$ is bounded, Lemma \ref{lem:poho2} implies that
$$ \sum_{j=2}^3 \int_{\R^3}  |\partial_{x_j} \psi_n|^2 = O(1).$$

The idea of the proof is to try to relate the behavior of $\psi_n$ with that of the 1-D solutions of the Gross-Pitaevskii equation. Next proposition is a first step in this line (see also Remark \ref{rrr}).

\begin{prop} Let $\psi_n$ be solutions of \eqref{eq:GPellip} for $c_n$, $c_n \to c$, such that $I_{c_n}(\psi_n) \leq C$. Then,

$$ \int_{\R^3} |\nabla g_n|^2 +  \int_{\R^3} |\nabla h_n|^2 =O(1),$$

where 

\begin{equation} \label{g} g_n= (\partial_{x_1} u_n) v_n - (\partial_{x_1} v_n) u_n - \frac{c_n}{2} (\rho_n^2-1), \end{equation}

\begin{equation} \label{h} h_n= \frac 1 2 |\partial_{x_1} \psi_n|^2 - \frac 1 4  (1-\rho_n^2)^2. \end{equation}
\end{prop}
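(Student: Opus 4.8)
The functions $g_n$ and $h_n$ are nothing but the two first integrals of the one–dimensional Gross–Pitaevskii ODE: if $\psi$ depended only on $x_1$, then multiplying \eqref{eq:GPellip} by $\overline{\psi}$ and by $\overline{\partial_{x_1}\psi}$ and taking, respectively, the imaginary and real parts shows that $g$ and $h$ are constant, so $\nabla g=\nabla h=0$. For a genuinely $d$–dimensional solution this fails, and the plan is to compute $\nabla g_n$ and $\nabla h_n$ by using \eqref{eq:GPellip} to trade the $x_1$–derivatives appearing in $\partial_{x_1 x_1}u_n,\partial_{x_1 x_1}v_n$ for transverse ones, and then to check that every resulting term is a product of a factor uniformly bounded in $L^\infty$ (Lemma \ref{lem:bound}) with a factor carrying at least one \emph{transverse} derivative $\partial_{x_j}$, $j\ge2$. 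The gain is that transverse derivatives are controlled: by Lemma \ref{lem:poho2} together with \eqref{eq:A bounded} and the hypothesis, $\sum_{j\ge2}\int_{\R^d}|\partial_{x_j}\psi_n|^2=(d-1)I_{c_n}(\psi_n)=O(1)$.

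The first and main step is to upgrade this $L^2$ bound to a uniform $H^1$ bound on the transverse derivatives, namely that $\int_{\R^d}|\nabla\partial_{x_k}\psi_n|^2=O(1)$ for every $k\ge2$. To see this I set $w=\partial_{x_k}\psi_n$ and differentiate \eqref{eq:GPellip} in $x_k$, which yields $\Delta w=-ic_n\partial_{x_1}w-(1-\rho_n^2)w+(\partial_{x_k}\rho_n^2)\psi_n$. Testing against $w$ and integrating by parts (the boundary terms at infinity vanish by the decay estimates of Lemma \ref{lem:decay}, valid for each fixed $n$) gives
$$\int_{\R^d}|\nabla w|^2=c_n\int_{\R^d}\lan i\partial_{x_1}w,w\ran+\int_{\R^d}(1-\rho_n^2)|w|^2-\int_{\R^d}(\partial_{x_k}\rho_n^2)\lan\psi_n,w\ran.$$
Since $\rho_n$ and $\psi_n$ are uniformly bounded and $|\partial_{x_k}\rho_n^2|\le C|w|$, the last two integrals are bounded by $C\int_{\R^d}|w|^2=C\int_{\R^d}|\partial_{x_k}\psi_n|^2=O(1)$. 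For the first term I use the pointwise bound $|\lan i\partial_{x_1}w,w\ran|\le|\partial_{x_1}w|\,|w|$ and Young's inequality in the form $c_n|\lan i\partial_{x_1}w,w\ran|\le\tfrac{\varepsilon}{2}|\nabla w|^2+\tfrac{c_n^2}{2\varepsilon}|w|^2$; choosing $\varepsilon<1$ the first piece is absorbed into the left–hand side, and, using once more $\int_{\R^d}|w|^2=O(1)$, the claim follows.

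With this estimate the conclusion is a direct computation. Replacing $\partial_{x_1 x_1}u_n,\partial_{x_1 x_1}v_n$ via \eqref{eq:GPellip} one finds, after cancellations,
$$\partial_{x_1}g_n=u_n\sum_{j\ge2}\partial_{x_j x_j}v_n-v_n\sum_{j\ge2}\partial_{x_j x_j}u_n,$$
$$\partial_{x_1}h_n=-\sum_{j\ge2}\big(\partial_{x_1}u_n\,\partial_{x_j x_j}u_n+\partial_{x_1}v_n\,\partial_{x_j x_j}v_n\big),$$
while for $k\ge2$ the derivatives $\partial_{x_k}g_n$ and $\partial_{x_k}h_n$ expand into sums of terms each of which is a bounded factor times either a first transverse derivative $\partial_{x_j}\psi_n$ or a mixed second derivative $\partial_{x_1 x_k}\psi_n$ ($j,k\ge2$). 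Every factor carrying a transverse derivative — whether a first derivative $\partial_{x_j}\psi_n$ or a second derivative $\partial_{x_j x_k}\psi_n$ with some index $\ge2$ — is bounded in $L^2$ by the previous step, and the remaining factors are bounded in $L^\infty$ by Lemma \ref{lem:bound}. Hence $\int_{\R^3}|\nabla g_n|^2$ and $\int_{\R^3}|\nabla h_n|^2$ are $O(1)$.

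The main obstacle is precisely the first step. The naive energy estimate for $w=\partial_{x_k}\psi_n$ produces the first–order term $c_n\int_{\R^d}\lan i\partial_{x_1}w,w\ran$, which is not controlled by $\int|\partial_{x_k}\psi_n|^2$ alone; moreover, since $\partial_{x_k}\psi_n$ need not be integrable (its decay is only of order $|x|^{-d}$), it cannot be bounded by an $L^\infty\times L^1$ argument either. The key point is that this term is of the same order as $\int|\nabla w|^2$, so that Young's inequality allows one to absorb it into the left–hand side. This is where the uniform $H^1$ control of the transverse derivatives is genuinely obtained, and it is exactly what renders all the second–order terms appearing in $\nabla g_n$ and $\nabla h_n$ tractable.
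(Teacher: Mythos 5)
Your proof is correct, but it takes a genuinely different route from the paper's. The paper never estimates second derivatives in $L^2$: it embeds $g_n$ and $h_n$ as the first components of the vector fields $G_n$ (see \eqref{defG}) and $H_n=\left(h_n,\ u_{x_1}u_{x_2}+v_{x_1}v_{x_2},\ u_{x_1}u_{x_3}+v_{x_1}v_{x_3}\right)$, notes that $\nabla\cdot G_n=0$ and that $\nabla\cdot H_n$, $\mathrm{curl}\,G_n$, $\mathrm{curl}\,H_n$ consist only of products of an $L^\infty$-bounded factor (Lemma \ref{lem:bound}) with a \emph{first} transverse derivative controlled in $L^2$ by \eqref{eq:A bounded} --- whenever a second derivative appears it is placed in $L^\infty$, never in $L^2$ --- and then recovers $\|\nabla G_n\|_{L^2}+\|\nabla H_n\|_{L^2}=O(1)$ from the Helmholtz decomposition $H=\nabla(\Delta^{-1}(\nabla\cdot H))-\mathrm{curl}(\Delta^{-1}\mathrm{curl}\,H)$ and Plancherel. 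You instead differentiate the equation in $x_k$, $k\geq 2$, and run an energy estimate in which the first-order term $c_n\int\lan i\partial_{x_1}w,w\ran$ is absorbed by Young's inequality; this yields the stronger intermediate estimate $\|\nabla\partial_{x_k}\psi_n\|_{L^2}=O(1)$, after which your pointwise expansions of $\nabla g_n$ and $\nabla h_n$ (which I verified against the equation; they are correct) give the conclusion termwise. Your argument is more elementary (no Fourier analysis, no div--curl structure) and proves more (uniform $H^1$ control of all transverse derivatives), while the paper's argument buys the same conclusion without ever differentiating the equation. One point of rigor you should repair: Lemma \ref{lem:decay} controls only $\nabla\psi_n$, not $D^2\psi_n$, so neither the vanishing of the boundary terms nor --- more importantly --- the a priori finiteness of $\int|\nabla w|^2$ needed to legitimize the absorption step follows from it as stated. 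Both are obtained instead by observing that, for each fixed $n$, $\Delta\psi_n=-ic_n\partial_{x_1}\psi_n-(1-\rho_n^2)\psi_n\in L^2(\R^d)$, whence $D^2\psi_n\in L^2$ by Calder\'on--Zygmund estimates (exactly as in the proof of Proposition \ref{prop:half}); then $w\in H^1$ with $\Delta w\in L^2$, the integration by parts holds with no boundary contribution, and the absorption is valid.
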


\begin{remark} \label{rrr}
	
The two quantities defined above correspond to the invariants of the 1-D Gross-Pitaevskii equation. Indeed $h$ represents its hamiltonian, whereas $g$ is another invariant given by the fact that the problem, after a change of variables, is radially symmetric (see equations \eqref{eq:GP1D}, \eqref{eq:GP1Dbis}). On this aspect, see for instance \cite{bgs-survey}, pages 3-4.
	
\end{remark}

\begin{proof}

For the sake of clarity we drop the subscript $n$ in the proof of this proposition.

We first consider the function $g$, which is an $L^2$ function, but with $L^2$ norm out of control. Observe that equation \eqref{eq:GPellip} implies that $\nabla \cdot G=0$, where

\begin{equation} \label{defG} G=(g, u_{x_2} v - v_{x_2} u, u_{x_3} v - v_{x_3} u). \end{equation}
	
Straightforward computations give:

$$ curl \, G =\left(  \begin{array}{c} \displaystyle 2 u_{x_3} v_{x_2} - 2 v_{x_3} u_{x_2} \\ \displaystyle 2 v_{x_3} u_{x_1} - 2 u_{x_3} v_{x_1} - \frac{c}{2} (\rho^2-1)_{x_3} \\ \displaystyle 
\displaystyle - 2 v_{x_2} u_{x_1} + 2 u_{x_2} v_{x_1} + \frac{c}{2} (\rho^2-1)_{x_2}  \end{array} \right).$$

Observe that by \eqref{eq:A bounded}, the derivatives with respect to $x_2$, $x_3$ are uniformly bounded (with respect to $n$) in $L^2$. Moreover, all factors involved are bounded in $L^\infty$ by Lemma \ref{lem:bound}. As a consequence, $curl G$ is uniformly bounded in $L^2$. Observe now that:

$$ G= curl (-\Delta^{-1} curl \, G ),$$

where $\Delta^{-1}$ is given by convolution with the Coulomb potential $\frac{1}{4 \pi |x|}$, see for instance \cite[Subsection 2.4.1]{bertozzi}. 

 By using the Fourier Transform and Plancherel, all partial derivatives of $G$ are uniformly bounded in $L^2$, independently of $n$. This concludes the proof for $g$.

\medskip 

For $h$, the proof follows the same ideas. Let us define the vector field:

$$H =\left( h, u_{x_1} u_{x_2} + v_{x_1} v_{x_2},  u_{x_1} u_{x_3} + v_{x_1} v_{x_3} \right).$$

Let us recall here that $|\psi_{x_1}|^2 = u_{x_1}^2 + v_{x_1}^2$. Observe first that $H$ is an $L^2$ vector field, even if its $L^2$ norm could be unbounded as $n \to +\infty$. Taking into account \eqref{eq:GPellip}, straightforward computations give:

$$ \nabla \cdot H = u_{x_1 x_2} u_{x_2} + u_{x_1 x_3} u_{x_3} + v_{x_1 x_2} v_{x_2} + v_{x_1 x_3} v_{x_3}   $$

which is uniformly bounded in $L^2$ norm, again, by \eqref{eq:A bounded} and Lemma \ref{lem:bound}. Moreover, we can compute:

$$ curl \, H = \left(  \begin{array}{c} \displaystyle u_{x_1 x_2} u_{x_3} + v_{x_1 x_2} v_{x_3} - u_{x_1 x_3} u_{x_2} -v_{x_1 x_3} u_{x_2} \\ \displaystyle - u_{x_1 x_1} u_{x_3} -  v_{x_1 x_1} v_{x_3} - (1-\rho^2) (u u_{x_3} + v v_{x_3})\\ \displaystyle   u_{x_1 x_1} u_{x_2} +  v_{x_1 x_1} v_{x_2} + (1-\rho^2) (u u_{x_2} + v v_{x_2}) \end{array} \right).$$

which is also uniformly bounded in $L^2$ norm. We now recall that:

$$ H = \nabla (\Delta^{-1} (\nabla \cdot H)) - curl (\Delta^{-1} \, curl H),$$

see again \cite[Subsection 2.4.1]{bertozzi}. By using the Fourier Transform and Plancherel, all partial derivatives of  $H$ are uniformly bounded in $L^2$, finishing the proof. 

\end{proof}

\begin{remark} \label{r1} Let us point out that the above result can be easily extended to any dimension. However, in dimension 3 it implies, by Sobolev inequality, that:

\begin{equation} \label{eq:Sob} \int_{\R^3} |g_n|^6 + \int_{\R^3} |h_n|^6 = O(1) \end{equation}

In dimension $d>3$ the Sobolev exponent is $\frac{2d}{d-2}$. However we cannot deduce a similar expression in dimension 2. The lack of a Sobolev inequality in dimension 2 is one of the obstacles for this approach to work also in the planar case.

\end{remark}

\begin{definition} \label{defS}  We define the set $S_n^r= \{ x \in \R^3: \rho_n(x) < r \}$. The behavior of these sets will be important in our arguments.
\end{definition}

Next lemma is a key ingredient in our proof.

\begin{lem} \label{lem:51} Under the assumptions of Theorem \ref{teo:3}, assume that for some $r\in (0, 1)$, $|S_n^{r}| \to +\infty$. Then, there exists $\xi_n \in S_n^r$ and $R_n \to +\infty$ such that:
	
	$$\int_{B(\xi_n, R_n)}  |g_n |^6 +  |h_n |^6  + \sum_{i=2}^3 |\partial_{x_i} \psi_n |^2  \to 0.$$
	
\end{lem}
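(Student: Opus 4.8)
The plan is to exploit the three quantities we control: the $L^6$ bounds on $g_n$ and $h_n$ from \eqref{eq:Sob}, the $L^2$ bound on the transverse derivatives $\partial_{x_i}\psi_n$ ($i=2,3$) from \eqref{eq:A bounded}, and the hypothesis $|S_n^r|\to+\infty$. First I would observe that the three integrands we wish to make small are globally controlled: by \eqref{eq:Sob}, $\int_{\R^3}(|g_n|^6+|h_n|^6)=O(1)$, and by \eqref{eq:A bounded} together with $I_{c_n}(\psi_n)\le C$, $\int_{\R^3}\sum_{i=2}^3|\partial_{x_i}\psi_n|^2=O(1)$. Thus the nonnegative quantity
\[
F_n(x)=|g_n(x)|^6+|h_n(x)|^6+\sum_{i=2}^3|\partial_{x_i}\psi_n(x)|^2
\]
has $\int_{\R^3}F_n=O(1)$ uniformly in $n$.

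Next I would run an averaging argument over the large set $S_n^r$. The idea is that if $F_n$ had to stay bounded below on a ball around \emph{every} point of $S_n^r$, then, because $|S_n^r|\to+\infty$ and $\int F_n$ is bounded, we would get a contradiction; hence there must be a point where $F_n$ is small on a large ball. Concretely, fix a radius $R$ and consider the average of $\int_{B(x,R)}F_n$ as $x$ ranges over $S_n^r$. Since each point of $\R^3$ lies in at most $O(R^3)$ of the balls $B(x,R)$ centered in a $1$-net, a covering/overlap estimate gives that the number of disjoint unit balls meeting $S_n^r$ on which $\int_{B(x,R)}F_n\ge\eta$ is at most $\frac{C(R)}{\eta}\int_{\R^3}F_n=\frac{C(R)}{\eta}O(1)$. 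Comparing this with $|S_n^r|\to+\infty$, for each fixed $R$ and $\eta>0$ there exists, for $n$ large, a point $\xi_n\in S_n^r$ with $\int_{B(\xi_n,R)}F_n<\eta$. A standard diagonal argument then lets $R=R_n\to+\infty$ and $\eta=\eta_n\to 0$ simultaneously, yielding $\xi_n\in S_n^r$ and $R_n\to+\infty$ with $\int_{B(\xi_n,R_n)}F_n\to 0$, which is exactly the claim.

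The technical point I want to be careful about is the overlap/covering step: one should pass to a maximal family of points $\{z_k\}\subset S_n^r$ that are $R$-separated (say $|z_k-z_\ell|\ge 2R$), so that the balls $B(z_k,R)$ are pairwise disjoint. Disjointness forces $\#\{k:\int_{B(z_k,R)}F_n\ge\eta\}\le \eta^{-1}\int_{\R^3}F_n$, a bound independent of $n$. On the other hand, maximality implies $S_n^r\subset\bigcup_k B(z_k,2R)$, so the number of $R$-separated points satisfies $\#\{z_k\}\ge c\,|S_n^r|R^{-3}\to+\infty$ for each fixed $R$. Since the number of ``bad'' centers is bounded but the total number of centers diverges, a ``good'' center $z_k=\xi_n$ with $\int_{B(\xi_n,R)}F_n<\eta$ must exist, and it lies in $S_n^r$.

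The main obstacle is making the two growth rates genuinely compatible in the diagonalization: we need $R_n\to+\infty$ while keeping $\eta_n\to 0$, and this requires that $|S_n^r|R_n^{-3}$ still diverges after choosing $R_n$. Since $|S_n^r|\to+\infty$ gives us no quantitative rate, the safe route is to choose, for each fixed pair $(R,\eta)=(R^{(m)},\eta^{(m)})$ with $R^{(m)}\to+\infty$, $\eta^{(m)}\to 0$, an index $n_m$ large enough that $|S_{n}^r|\ge \tfrac{C(R^{(m)})}{\eta^{(m)}}(\int F_{n}+1)\,(R^{(m)})^3$ for $n\ge n_m$; such $n_m$ exists precisely because the right-hand side is a fixed number for each $m$ while $|S_n^r|\to+\infty$. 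Then for $n\in[n_m,n_{m+1})$ we set $R_n=R^{(m)}$ and extract the good center $\xi_n$ at tolerance $\eta^{(m)}$. This produces $R_n\to+\infty$ and $\int_{B(\xi_n,R_n)}F_n\to 0$ with $\xi_n\in S_n^r$, completing the proof; the global $L^6$ and $L^2$ bounds guarantee $\int F_n=O(1)$ throughout.
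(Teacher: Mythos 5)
Your proof is correct and is essentially the paper's argument: both rest on packing many pairwise disjoint balls of divergent radius with centers in $S_n^r$ and applying the pigeonhole principle to the uniformly bounded integral $\int_{\R^3}\bigl(|g_n|^6+|h_n|^6+\sum_{i=2}^3|\partial_{x_i}\psi_n|^2\bigr)=O(1)$, which comes from \eqref{eq:Sob} and \eqref{eq:A bounded} exactly as you say. The only difference is bookkeeping: the paper avoids your diagonalization over $(R,\eta)$ by coupling the radius to the hypothesis directly, setting $R_n=|S_n^r|^{1/6}$ and greedily selecting $j_n=\bigl[|S_n^r|^{1/2}/c_0\bigr]$ points of $S_n^r$ whose balls $B(x_j^n,R_n)$ are disjoint, so the smallest ball integral is at most $C/j_n\to 0$ with no rate-matching needed.
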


\begin{proof} 
	Take $x^n_1 \in S_n^r$, and define $R_n = |S_n^r|^{\frac{1}{6}}$. Observe that
	
	$$|B(x^n_1, 2 R_n)| = c_0  |S_n^r|^{\frac{1}{2}}, \ \ c_0 = \frac{32}{3} \pi.$$
	
	As a consequence, there exists $x^n_2 \in S_n^r \setminus B(x^n_1, 2 R_n)$. Clearly, 
	
	$$ B(x^n_1, R_n) \cap B(x^n_2, R_n) = \emptyset \mbox { and } |B(x^n_1, 2 R_n) \cup B(x^n_2, 2 R_n) | \leq 2 c_0  |S_n^r|^{\frac{1}{2}}.$$
	
	We can choose then $x^n_3 \in S_n^r \setminus (B(x^n_1, 2 R_n) \cup B(x^n_2, 2 R_n))$, with
	
	$$ B(x^n_1, R_n) \cap B(x^n_2, R_n) \cap  B(x^n_3, R_n)= \emptyset $$
	and 
	$$ |B(x^n_1, 2 R_n) \cup B(x^n_2, 2 R_n) \cup  B(x^n_3, R_n)| \leq 3 c_0  |S_n^r|^{\frac{1}{2}}.$$
	
	\medskip In this way we find $x_n^1 \dots x_n^{j_n} \in S_n^r$ with:
	
	$$ B(x^n_j, R_n) \cap B(x^n_k, R_n) = \emptyset, \ \ j,\ k \in \{1, \dots j_n \}, \ j \neq k.$$
	
	where $j_n = [ \frac{|S_n|^{1/2}}{c_0}]$ (here $[a]$ denotes the largest integer smaller or equal than $a$). Hence we can choose $\xi_n = x_n^k$ such that, taking into account \eqref{eq:Sob} and \eqref{eq:A bounded}:
	
	$$ \int_{B(\xi_n, R_n)}  | g_n |^6 + | h_n |^6 + \sum_{i=2}^d |\partial_{x_i} \psi_n |^2  \leq  \frac{C}{j_n} \to 0.$$
	
\end{proof}
	
The above result will be the key to prove next proposition, which allows us to have some control on the set of vortices of the solutions.
	
\begin{prop} \label{prop:vortices} Let us fix $r \in (0, c/\sqrt{2})$. Then, there exists $N \in \mathbb{N}$ and $N$ sequences of disjoint closed balls  $\overline{B_k^n}= \overline{B}(\xi_k^n, R_k)$ ($k=1 \dots n$) with $R_k \in (1, N)$ such that 
	
$$ S_n^r \subset \cup_{k=1}^N B_k^n.$$

\end{prop}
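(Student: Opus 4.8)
The plan is to show that the sublevel set $S_n^r$ of the modulus cannot spread out indefinitely, and therefore must be contained in a bounded, controlled collection of balls. The essential mechanism is Lemma \ref{lem:51}: if the measure $|S_n^r|$ were unbounded, we could find a ball $B(\xi_n, R_n)$ with $R_n \to +\infty$, centered at a point $\xi_n \in S_n^r$, on which the local quantities $|g_n|^6$, $|h_n|^6$ and $\sum_{i=2}^3 |\partial_{x_i}\psi_n|^2$ all tend to zero. I would first argue that this forces the translated sequence $\psi_n(\cdot - \xi_n)$ to converge (locally in $C^k$, via Lemma \ref{lem:bound} and Ascoli-Arzel\`a) to a limit $\psi_\infty$ that depends only on $x_1$ and satisfies the 1-D Gross-Pitaevskii equation, with both invariants $g$ and $h$ identically equal to their constant limiting values. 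Since $\xi_n \in S_n^r$, the limit satisfies $\rho_\infty(0) \leq r < c/\sqrt 2$ at the center, so $\psi_\infty$ is a nontrivial 1-D solution.

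The next step is to identify which 1-D solutions can arise and to rule them out. The knowledge of the 1-D solutions (as emphasized in Remark \ref{rrr} and used elsewhere in the paper) is decisive here: the vanishing of the $x_2, x_3$ derivatives in the limit, together with the constancy of $g$ and $h$, pins down $\psi_\infty$ to be either a constant of modulus $1$ (excluded since $\rho_\infty(0) < c/\sqrt 2 < 1$), or a dark/black soliton, or a circular solution $\rho_0 e^{i\omega_0 x_1}$. I would then invoke the Morse index information: by Remark \ref{remark morse} the Morse index is lower semicontinuous under $C^1_{loc}$ convergence, so $ind(\psi_\infty) \leq \liminf ind(\psi_n) \leq 1$ using the bound from Theorem \ref{teo:almost}. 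As in the strategy outlined for Theorem \ref{teo:all}, a circular solution with $\rho_0^2 < \tfrac 23(1+c^2/4)$ has infinite Morse index (the Appendix computation), giving a contradiction; the dark soliton does not satisfy the pointwise constraint $\rho_\infty(0) < c/\sqrt 2$ at its zero, and can be excluded by the decay/asymptotic structure. This contradiction establishes that $|S_n^r|$ is bounded, say by a constant $C(r)$.

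With $|S_n^r| \leq C(r)$ in hand, I would extract the geometric conclusion. Since $|\nabla \rho_n|$ is uniformly bounded in $L^\infty$ (Lemma \ref{lem:bound}), on the set where $\rho_n < r$ the gradient bound guarantees a definite amount of measure around each connected cluster, so the number of ``well-separated'' components of $S_n^r$ is bounded uniformly in $n$. Concretely, I would cover $S_n^r$ by balls of a fixed radius and use a Vitali-type selection together with the measure bound $|S_n^r| \leq C(r)$ to conclude that $S_n^r$ is contained in at most $N$ disjoint balls $B_k^n = B(\xi_k^n, R_k)$ with uniformly bounded radii $R_k \in (1, N)$, where $N$ depends only on $r$ and $C(r)$ but not on $n$. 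The uniform bound on the radii follows again from the fact that each such ball has measure bounded below (by the $L^\infty$ gradient estimate) and above (by $C(r)$), forcing $R_k < N$.

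The main obstacle I anticipate is the identification and exclusion of the limiting 1-D profiles. Proving that the limit genuinely depends only on $x_1$ requires showing that the $L^2$-smallness of $\sum_{i=2}^3 |\partial_{x_i}\psi_n|^2$ on balls of diverging radius upgrades, in the local limit, to $\partial_{x_i}\psi_\infty \equiv 0$; this is where the uniform $L^6$ control of $g_n, h_n$ from \eqref{eq:Sob} and the Sobolev inequality in dimension $3$ (unavailable in $d=2$, cf. Remark \ref{r1}) are indispensable. Equally delicate is confirming that the only candidate forcing a contradiction, the circular solution, actually appears, and that the Morse index argument applies cleanly after translation; handling the possibility of a dark soliton limit, whose single zero is compatible with small modulus, requires care in combining the pointwise value $\rho_\infty(0) \leq r < c/\sqrt 2$ with the constancy of the invariants to exclude it.
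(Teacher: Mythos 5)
Your covering argument (Steps 2--3 of the paper) is essentially the paper's: a lower measure bound around each point of $S_n^r$ via the uniform gradient estimate of Lemma \ref{lem:bound} (using an intermediate level $s\in(r,c/\sqrt 2)$ so that $B(\xi,\delta)\subset S_n^s$), an upper bound on $|S_n^s|$, hence at most $N$ points at mutual distance $\geq 1$, followed by a merging procedure to obtain disjoint balls of radius at most $N$. One caveat: a Vitali-type selection by itself yields disjoint balls whose \emph{dilates} cover, not disjoint covering balls; the paper instead merges intersecting balls $\overline B(x,R_x)$, $\overline B(y,R_y)$ into $\overline B(x,R_x+R_y)$, which is how the radii end up in $(1,N)$. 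This part of your proposal is fine in spirit.

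The genuine gap is in Step 1, the boundedness of $|S_n^r|$. You overlooked the precise strength of Lemma \ref{lem:51}: it gives $\int_{B(\xi_n,R_n)}|g_n|^6+|h_n|^6\to 0$ with $R_n\to+\infty$, so by Fatou the $C^k_{loc}$ limit $\psi_\infty$ satisfies $g\equiv 0$ and $h\equiv 0$, not merely ``$g$ and $h$ constant.'' This is decisive. The 1-D travelling wave equation is integrable, and for generic constant values of the invariants $(g,h)$ its bounded solutions form a two-parameter family with \emph{periodically oscillating modulus} -- neither constants, nor dark solitons, nor circular solutions. Your trichotomy is therefore incomplete, and your exclusion mechanism does not close the argument: the Appendix computes infinite Morse index only for the \emph{circular} solutions $\rho_0 e^{i\omega_0 x_1}$, so a periodic non-circular limit with $\rho_\infty(0)\leq r$ is not ruled out by anything you wrote. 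By contrast, once one knows $g\equiv h\equiv 0$, the classification recalled in \cite{bgs-survey} (pages 3--4) says the limit is, up to rotation, exactly a dark soliton
$$\psi_\infty(x_1)=\sqrt{\tfrac{2-c^2}{2}}\,\tanh\Bigl(\tfrac{\sqrt{2-c^2}}{2}(x_1+t)\Bigr)+i\tfrac{c}{\sqrt 2},$$
whose modulus is everywhere at least $c/\sqrt 2$; this contradicts $\rho_\infty(0)\leq r<c/\sqrt 2$ immediately and elementarily. (Note also that a circular solution with $g=h=0$ forces $\rho_0=c/\sqrt 2>r$, so it can never appear here.) The paper's proof of this proposition uses no Morse index information at all -- indeed the paper remarks explicitly that Theorem \ref{teo:3} does not need it; the Morse index enters only in dimension $2$ (Theorem \ref{teo:all}), precisely because there the $L^6$ control \eqref{eq:Sob} of $g_n,h_n$ is unavailable and the limit can genuinely be circular. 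Your proposal imports the $d=2$ mechanism into the $d=3$ setting where it is both unnecessary and, as written, insufficient.
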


\begin{proof} The proof is divided into several steps:
	
	\medskip
	
{\bf Step 1: }  $|S_n^r|$ remains bounded.
	
Assume by contradiction that $|S_n^r| \to +\infty$ as $n \to +\infty$. Take $\xi_n \in \R^3$ given by Lemma \ref{lem:51}, and define $\tilde{\psi}_n = \psi_n(\cdot - \xi_n)$. By Lemma \ref{lem:bound} we can use Ascoli-Arzel\`{a} theorem to conclude that, up to a subsequence, $\tilde{\psi}_n$ converges $C^k$ locally to a solution $\psi$ of \eqref{eq:GPellip}. By the choice of $\xi_n$, this solution satisfies that $\rho(0) \leq r$. Moreover, by Fatou lemma we have that:
	
$$ \partial_{x_2} \psi =0, \ \partial_{x_3} \psi =0, \ g=0, \ h =0,$$
where $g$ and $h$ are the analogous of \eqref{g}, \eqref{h}, namely:

\[ g= u_{x_1} v - v_{x_1} u - \frac{c}{2} (\rho^2-1), \]

\[h= \frac 1 2 |\psi_{x_1}|^2 - \frac 1 4  (1-\rho^2)^2. \]

As a consequence, $\psi$ is a 1-D solution to the Gross-Pitaevskii equation with $g=0$, $h=0$. But those are precisely the finite energy 1-D travelling waves (see \cite[pages 3, 4]{bgs-survey}); hence, after a rotation, $\psi$ has the explicit expression:

$$\psi(x_1)=\sqrt{\frac{2-c^2}{2}} \tanh \left( \frac{\sqrt{2-c^2}  }{2}(x_1 + t) \right)+i\frac{c}{\sqrt{2}},  \ t \in \R.$$

But this is in contradiction with $|\psi(0)| \leq r < c/\sqrt{2}$, concluding the proof.

\medskip

{\bf Step 2:} There exists $N \in \mathbb{N}$, $\xi_k^n \in \R^3$ such that:

 $$ S_n^r \subset \cup_{k=1}^N B(\xi_k^n, 1).$$

Fix $s \in (r, \frac{c}{\sqrt{2}})$, and define $\xi_1^n$ as any point in $S_n^r$. Since $\nabla \rho_n$ is uniformly bounded (Lemma \ref{lem:bound}), there exists $\delta >0$ such that $B(\xi_1^n, \delta) \subset S_n^s$. It suffices to take
$$ \delta \leq  \frac{s-r}{sup_{n} \| \nabla \rho_n \|_{L^{\infty}}}.$$
Without loss of generality we can assume that $\delta < 1/2$.

Take now $\xi_2^n$ any point in $S_n^r \setminus B(\xi_1^n, 1)$; again, $B(\xi_2^n, \delta) \subset S_n^s$, and observe that $B(\xi_1^n, \delta) \cap B(\xi_2^n, \delta) = \emptyset$.

We follow by taking $\xi_3^n$ any point in $S_n^r \setminus \left( B(\xi_1^n, 1) \cup B(\xi_2^n, 1) \right)$, if there exists one. Since $|S_n^s|$ is bounded by the step 1, this procedure has to finish at a certain point, yielding the thesis of the proposition. Indeed we cannot find more than $N$ such points, where 

$$ N = \left [ \frac{\sup_n |S_n^s|}{\frac 4 3 \pi \delta^3} \right ].$$

Recall that $[a]$ stands for the largest integer smaller or equal than $a$.

\medskip

{\bf Step 3:} Conclusion

By step 2, we have already $S_n^r$ contained in $N$ balls of radius 1. The problem is that they might not be disjoint. We now make a procedure of aggregation of balls which is gererally described as follows:

\medskip Take a closed ball $\overline{B}(x, R_x)$. If it intersects a closed ball $\overline{B}(y,R_y)$, we replace both balls by $\overline{B}(x, R_x+R_y)$. We now repeat the procedure to the new set of balls.

\medskip We apply this procedure iteratively to the balls given in Step 2, and in this way we conclude.

\end{proof}

The above proposition is the first milestone in our proof: it allows us to control the vortices of the solutions, as they are always contained in a fixed number of disjoint balls of bounded radii. Since $\R^3 \setminus \cup_{k=1}^N \overline{B}_k^n$ is a simply connected open set, we can guarantee the existence of a lifting of $\psi_n$ outside these balls. Being more specific, taking $\frac{c}{2}$ as the value $r$ (for instance), we can write: 

$$\psi_n(x) = \rho_n(x) e^{i \theta_n(x)} \ \forall  \ x \in \R^d \setminus \cup_{k=1}^N B_k^n$$
where the balls $B_k^n$ are given by Proposition \ref{prop:vortices}. Since $\psi_n$ is a solution of \eqref{eq:GPellip}, we have that $\rho_n$, $\theta_n$ satisfy equations \eqref{eq:GPstripvarrp}.

\begin{remark} \label{r2} This is a second crucial point in which the requirement $d\geq 3$ is crucial. If $d=2$ we can have liftings of finite energy solutions outside one ball (see \cite[Lemma 15]{gravejat-AIHP}), but this is not possible in the complement of two or more disjoint balls.
\end{remark}

Next lemma is inspired in \cite{bgs-cmp}[Lemmas 2.8, 2.10], which are concerned with the case without vortices.   Compare it with the identities \eqref{eq:MOwv}, \eqref{eq:MOwv2} for the vortexless case.

\begin{lem} \label{lem:O(1)}
	Take $c \in (0,\ \sqrt{2})$, $c_n \in E$ with $c_n \to c$ and $\psi_n$ the solutions given by Theorem \ref{teo:almost}. Then
	\begin{equation}\label{eq:stimmomassa}
	\mathcal{P}(\psi_n)=\frac 12 \int_{\R^3 \setminus \cup_{k=1}^N B_k^n}(1-\rho_n^2)\partial_{x_1}\theta_n +O(1),
	\end{equation}
	\begin{equation}\label{eq:stimmomass2b}
	c \mathcal{P}(\psi_n)= \int_{\R^3 \setminus \cup_{k=1}^N B_k^n}\rho_n^2|\nabla \theta_n|^2 +O(1),
	\end{equation}
	\begin{equation}\label{eq:stimmomass3c}
	\int_{\R^3 \setminus \cup_{k=1}^N B_k^n}|\nabla \rho_n|^2 =O(1).
	\end{equation}
\end{lem}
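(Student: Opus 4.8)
\textbf{Proof strategy for Lemma \ref{lem:O(1)}.}

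The plan is to mimic the computations of Lemma \ref{prop:stimmoment}, but now working on the domain $\Omega = \R^3 \setminus \cup_{k=1}^N \overline{B}_k^n$ rather than on a slab, and carefully track the boundary contributions coming from the spheres $\partial B_k^n$. The point is that in the vortexless case of Lemma \ref{prop:stimmoment} the lifting exists globally and the only boundary is $\partial \Omega_N$, where $\theta = 0$; here the lifting exists only outside the balls $B_k^n$, so integration by parts over $\Omega$ generates boundary integrals over the spheres $\partial B_k^n$. The whole content of the lemma is that these spherical boundary terms, together with the integral contributions from inside the balls, are all $O(1)$ as $n \to +\infty$. This is plausible because, by Proposition \ref{prop:vortices}, the number of balls $N$ is fixed and their radii $R_k$ are bounded by a fixed constant $N$; combined with the uniform pointwise bounds on $\psi_n$ and its derivatives from Lemma \ref{lem:bound}, every integrand over $\partial B_k^n$ is bounded pointwise by a constant, and the total surface area is bounded by $4\pi N^3 \cdot N = O(1)$.

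First I would establish \eqref{eq:stimmomass3c}. This is the most direct: from the energy decomposition one knows $\int_{\R^3} |\nabla \rho_n|^2$ is controlled wherever the lifting exists, and inside the $N$ balls the contribution $\int_{\cup B_k^n} |\nabla \rho_n|^2$ is bounded pointwise (Lemma \ref{lem:bound}) times a bounded volume, hence $O(1)$; one then has to recover the estimate on the complement by using that the full momentum/energy controlled quantities differ from the complement integral by exactly such a bounded-volume term. Next, for \eqref{eq:stimmomassa} I would repeat the computation $\mathcal{P}(\psi_n)=\frac12 \int \partial_{x_1}(\rho_n \sin\theta_n - \theta_n) + (1-\rho_n^2)\partial_{x_1}\theta_n$ on $\Omega$; the divergence term integrates to a sum of flux integrals over $\partial B_k^n$, each of which is $O(1)$ by the uniform bounds and the bounded surface area. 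For \eqref{eq:stimmomass2b} I would multiply the first equation of \eqref{eq:GPstripvarrp} by $\theta_n$ and integrate over $\Omega \cap \Omega_{N,M}$ (truncating also in the $\tilde x$ directions as in Lemma \ref{prop:stimmoment}), sending $M \to +\infty$ to kill the contributions at spatial infinity via the integrability supplied by Lemma \ref{lem:decay}, while the spherical boundary terms on $\partial B_k^n$ remain and are absorbed into the $O(1)$.

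The main obstacle I expect is twofold. First, the quantity $\theta_n$ itself is only defined up to the global topology of $\Omega$: since $\Omega$ is not simply connected (it is the complement of several disjoint balls, though in $\R^3$ each individual sphere is simply connected on its complement, so a single-valued branch does exist by Remark \ref{r2}), one must verify that $\theta_n$ is genuinely single-valued and that the degree of $\psi_n$ on each $\partial B_k^n$ does not obstruct the lifting — this is exactly the place where the hypothesis $d=3$ is used, as emphasized in Remark \ref{r2}. Second, and more delicate, is controlling the boundary integrals involving $\theta_n$ on $\partial B_k^n$: unlike $\rho_n - 1$ and $\nabla \theta_n$, the phase $\theta_n$ is not a priori bounded in $L^\infty$ (it can wind), so terms such as $\int_{\partial B_k^n} \theta_n \rho_n^2 \nabla\theta_n \cdot \eta$ require an argument — one controls $\nabla\theta_n$ pointwise by Lemma \ref{lem:bound} and uses that the oscillation of $\theta_n$ over a sphere of bounded radius is bounded, so that after fixing the branch the product is $O(1)$ over the bounded surface. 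Handling this phase ambiguity cleanly, rather than the routine integration by parts, will be the technical heart of the proof.
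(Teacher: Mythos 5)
Your overall plan for \eqref{eq:stimmomassa} and \eqref{eq:stimmomass2b} --- integrate by parts on $\R^3 \setminus \cup_k B_k^n$, use Proposition \ref{prop:vortices} to get finitely many balls of bounded radii, and control the spherical boundary terms via the oscillation bound for $\theta_n$ --- is essentially the paper's route, and you correctly identified the phase ambiguity as the technical heart. But your resolution of it is incomplete at the one point where it really matters. For \eqref{eq:stimmomass2b} the boundary term is $\int_{\partial B_k^n} \theta_n \bigl( \frac{c}{2}(1-\rho_n^2)\eta_1 - \rho_n^2 \nabla\theta_n \cdot \eta \bigr)$, and bounded oscillation of $\theta_n$ on each sphere is \emph{not} enough: writing $\theta_n = \theta_n(p_0) + (\theta_n - \theta_n(p_0))$, the second piece is $O(1)$ by oscillation, but the first piece is $\theta_n(p_0)$ times the flux of the field through the sphere, and $\theta_n(p_0)$ may be unbounded in $n$ (the paper stresses that $\|\theta_n\|_{L^\infty}$ is not under control, and no single choice of branch --- ``fixing the branch,'' as you put it --- can normalize $\theta_n$ simultaneously on all $N$ spheres and at infinity). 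The missing idea is that this flux vanishes \emph{exactly}: the integrand is the restriction of the vector field $G_n$ of \eqref{defG}, which is defined globally in terms of $u_n, v_n$ (even inside the balls, where no lifting exists) and satisfies $\nabla \cdot G_n = 0$; the divergence theorem applied \emph{inside} $B_k^n$ then kills the flux, and only the oscillation piece survives. For \eqref{eq:stimmomassa} the analogous cancellation is the elementary fact $\int_{\partial B_k^n} \eta_1 = 0$, which your oscillation argument can patch, but for \eqref{eq:stimmomass2b} no such generic cancellation is available and the zero-flux observation is indispensable.

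The second, more serious gap is your treatment of \eqref{eq:stimmomass3c}. You propose to prove it first, ``from the energy decomposition,'' claiming $\int_{\R^3}|\nabla\rho_n|^2$ is controlled wherever the lifting exists. This is circular: boundedness of $\mathcal{E}(\psi_n)$ is precisely what the compactness argument is trying to establish (the proof of Theorem \ref{teo:3} assumes $\mathcal{E}(\psi_n) \to +\infty$ for contradiction), and at this stage the only uniformly bounded quantities are $I^{c_n}(\psi_n)$ and, via \eqref{eq:A bounded}, $\sum_{j\geq 2}\int |\partial_{x_j}\psi_n|^2$; in particular $\mathcal{P}(\psi_n)$ itself may blow up, so \eqref{eq:stimmomass2b} is an identity between two possibly unbounded quantities, not an a priori bound. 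The paper's proof of \eqref{eq:stimmomass3c} runs in the opposite logical order: it first establishes \eqref{eq:stimmomass2b}, then combines it with the full-space Pohozaev identity of Lemma \ref{lem:poho} (with $d=3$) to obtain $3\left(\mathcal{E}(\psi_n) - c\,\mathcal{P}(\psi_n)\right) = \int_{\R^3 \setminus \cup_k B_k^n} |\nabla\rho_n|^2 + O(1)$, and concludes because the left-hand side is $3\, I^{c_n}(\psi_n) = O(1)$ by Theorem \ref{teo:almost}. Without this Pohozaev step your argument for \eqref{eq:stimmomass3c} has no source of uniform control, and since \eqref{eq:stimmomass3c} is later what yields $\int_{\R^3}(1-\rho_n)^6 = O(1)$ via Sobolev, this gap breaks the chain leading to Theorem \ref{teo:3}.
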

\begin{proof}
	
    First of all, observe that $$\nabla \theta = \frac{u \nabla v-v \nabla u}{\rho^2},$$ so that
	
	\begin{equation} \label{boundtheta} |\nabla \theta_n | = O(1) \mbox{ in } \partial B_k^n  \Rightarrow |\theta_n(p) - \theta_n(q)| \leq C \ \forall \ p,\ q \in \partial B_{k}^n. \end{equation}

This is useful in what follows; observe that we do not know whether $\| \theta_n \|_{L^{\infty}}$ is bounded or not.
	
Direct computation gives
	$$
	\mathcal{P}(\psi_n)=\frac 12 \int_{\R^3 \setminus \cup_{k=1}^N B_k^n}\partial_{x_1}(\rho_n \sin \theta_n)-\rho_n^2\partial_{x_1}\theta +\frac 12 \int_{\cup_{k=1}^N B_k^n} \lan i \partial_{x_1} \psi_n, \psi_n-1\ran 
	$$
	which implies that 
	\begin{equation}\label{eq:stimmomassz}\mathcal{P}(\psi_n)=\frac 12 \int_{\R^3 \setminus \cup_{k=1}^N B_k^n}\partial_{x_1}(\rho_n \sin \theta_n -\theta_n)+(1-\rho_n^2)\partial_{x_1}\theta_n +O(1).
	\end{equation}
	In order to get \eqref{eq:stimmomassa} it suffices hence to prove that
	\begin{equation}\label{eq:stimmomasszz} \int_{\R^3 \setminus \cup_{k=1}^N B_k^n}\partial_{x_1}(\rho_n \sin \theta_n-\theta_n)=O(1).
	\end{equation}

	We recall that $ \partial_{x_1} (\rho_n \sin \theta_n - \theta_n)$ is integrable thanks to \eqref{eq:pointcruc} and \eqref{eq:stimmomassz}. By integration by parts, using the the decay estimates at the infinity,  we get
	$$\int_{\R^3 \setminus \cup_{k=1}^N B_k^n} \partial_{x_1}(\rho_n \sin \theta_n-\theta_n) =\sum_{k=1}^N \int_{\partial B_k^n} \left(\rho_n \sin \theta_n -\theta_n\right)\eta_1,$$
	where $\eta_1$ is the first component of the inward unit normal vector to the spheres $B_k^n$. Relation  \eqref{eq:stimmomasszz} follows now from \eqref{boundtheta} together with the fact that the outward unit surface normal $\eta_1$ has zero average on the sphere, which implies that
	$$\int_{\partial B_k^n} \theta_n\eta_1=\int_{\partial B_k^n} \left(\theta_n-\theta_n(p_0)\right)\eta_1=O(1),$$
	where $p_0$ is an arbitrary point on the sphere $B_k^n$.\\
	
	In order to prove \eqref{eq:stimmomass2b} we argue as in Lemma \ref{prop:stimmoment}, i.e. multiplying per first equation of \eqref{eq:GPstripvarrp} by $\theta_n$ and then
	integrating on $R^3 \setminus \cup_{k=1}^N B_k^n$. By integration by parts we get
	$$\frac{c}{2}\int_{R^3 \setminus \cup_{k=1}^N B_k^n} (1-\rho_n^2)\partial_{x_1}\theta_n -\int_{R^3 \setminus \cup_{k=1}^N B_k^n} \rho_n^2|\nabla \theta_n|^2  $$$$ = \sum_{k=1}^N \int_{\partial B_k^n} \theta_n \left ( \frac{c}{2} (1-\rho_n^2) \eta_1-  \rho_n^2 \nabla \theta \cdot \eta \right ).$$
	
	Observe that $G_n(x)= (\frac{c}{2} (1-\rho_n^2), 0, 0 ) - \rho_n^2 \nabla \theta_n$, where $G_n$ is defined in \eqref{defG}. In particular it is defined in the whole euclidean space and $\nabla \cdot G_n=0$. By integrating by parts in $B_k^n$, we obtain that
	
	$$ \int_{\partial B_k^n}  \frac{c}{2} (1-\rho_n^2) \eta_1-  \rho_n^2 \nabla \theta \cdot \eta =0.$$
	
	As a consequence, we can use \eqref{boundtheta} to obtain:  
	
	$$ \int_{\partial B_k^n} \theta_n \left ( \frac{c}{2} (1-\rho_n^2) \eta_1-  \rho_n^2 \nabla \theta \cdot \eta \right ) $$$$ = \int_{\partial B_k^n} (\theta_n - \theta_n(p_0)) \left ( \frac{c}{2} (1-\rho_n^2) \eta_1-  \rho_n^2 \nabla \theta \cdot \eta \right )=O(1).$$
	
	\medskip 
	
	Now we prove \eqref{eq:stimmomass3c}. From Lemma \ref{lem:poho} we get
	
	$$\frac{1}{2} \int_{\R^3} |\nabla \psi_n|^2  -2 c \mathcal{P}(\psi_n)+ \frac 3 4  \int_{\R^3} \left(1-|\psi_n|^2\right)^2 =0,$$
	which implies, thanks to \eqref{eq:stimmomass2b}
	$$\frac{1}{2} \int_{\R^3 \setminus \cup_{k=1}^N B_k^n} |\nabla \rho_n|^2  +\frac{3}{2} \int_{\R^3 \setminus \cup_{k=1}^N B_k^n} \rho^2|\nabla \theta_n|^2 + \frac 3 4  \int_{\R^3 \setminus \cup_{k=1}^N B_k^n} \left(1-|\rho_n|^2\right)^2 =$$
	$$=3  \int_{\R^3 \setminus \cup_{k=1}^N B_k^n} \rho^2|\nabla \theta_n|^2  +O(1)=3c  \mathcal{P}(\psi_n)+O(1).$$
	As a consequence we get
	$$3 \left(\mathcal{E}(\psi_n)-c  \mathcal{P}(\psi_n)\right)=\int_{\R^3 \setminus \cup_{k=1}^N B_k^n} |\nabla \rho_n|^2+O(1)$$
	and hence  \eqref{eq:stimmomass3} follows by the fact that $\mathcal{E}(\psi_n)-c  \mathcal{P}(\psi_n)=I(\psi_n)=O(1).$
\end{proof}

For next proposition it is useful to recall the definition \ref{defS}.

\begin{prop} \label{prop:ma}	Take $c \in (0,\ \sqrt{2})$, $c_n \in E$ with $c_n \to c$ and $\psi_n$ the solutions given by Theorem \ref{teo:almost}. Assume that:

\begin{equation}  \mathcal{E}(\psi_n) \to + \infty. \end{equation}
	
Then, for any $r \in (\frac{c}{\sqrt{2}}, 1)$, $|S_n^r| \to +\infty$.
	
\end{prop}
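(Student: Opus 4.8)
The plan is to reduce everything to the momentum identity \eqref{eq:stimmomassa} and play it against the pointwise bound \eqref{eq:pointcruc}; the threshold $r>c/\sqrt 2$ will enter precisely through the constant $\sqrt 2/\rho$ in that inequality. First I would record that $\mathcal{P}(\psi_n)\to+\infty$. By Theorem \ref{teo:almost} and \eqref{eq:A bounded} we have $0\le I(\psi_n)=\mathcal{E}(\psi_n)-c_n\mathcal{P}(\psi_n)=O(1)$, and since $c_n\to c\in(0,\sqrt 2)$ stays bounded away from $0$, the hypothesis $\mathcal{E}(\psi_n)\to+\infty$ forces both $\mathcal{P}(\psi_n)=(\mathcal{E}(\psi_n)+O(1))/c_n\to+\infty$ and the relation $\mathcal{E}(\psi_n)=c_n\mathcal{P}(\psi_n)+O(1)$.

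Next I would fix the vortex geometry. Applying Proposition \ref{prop:vortices} with a fixed $r_0\in(0,c/\sqrt 2)$ (say $r_0=c/2$), the set $S_n^{r_0}$ is contained in finitely many disjoint balls $\cup_k B_k^n$. Hence on $\Omega_n^{\mathrm{out}}:=\R^3\setminus\cup_k B_k^n$ we have $\rho_n\ge r_0$, a lifting $\psi_n=\rho_n e^{i\theta_n}$ is available, and $|\nabla\theta_n|=|u_n\nabla v_n-v_n\nabla u_n|/\rho_n^2\le C_1$ uniformly in $n$ thanks to the $L^\infty$ bounds of Lemma \ref{lem:bound}. Writing $s_n=1-\rho_n^2$, identity \eqref{eq:stimmomassa} reads $2\mathcal{P}(\psi_n)=\int_{\Omega_n^{\mathrm{out}}}s_n\,\partial_{x_1}\theta_n+O(1)$, and I would split this integral according to whether $\rho_n<r$ or $\rho_n\ge r$. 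On $\Omega_n^{\mathrm{out}}\cap S_n^r$ one has $r_0\le\rho_n<r$, so $|s_n|\le 1$ and $|\partial_{x_1}\theta_n|\le C_1$, whence
\[
\Big|\int_{\Omega_n^{\mathrm{out}}\cap S_n^r}s_n\,\partial_{x_1}\theta_n\Big|\le C_1\,|S_n^r|.
\]
On $\{\rho_n\ge r\}$ the pointwise inequality \eqref{eq:pointcruc} gives $|s_n\,\partial_{x_1}\theta_n|\le \tfrac{\sqrt 2}{\rho_n}e(\rho_n,\theta_n)\le \tfrac{\sqrt 2}{r}e(\rho_n,\theta_n)$, so that, using $e(\rho_n,\theta_n)\ge 0$ and $\mathcal{E}(\psi_n)=c_n\mathcal{P}(\psi_n)+O(1)$,
\[
\Big|\int_{\{\rho_n\ge r\}}s_n\,\partial_{x_1}\theta_n\Big|\le \frac{\sqrt 2}{r}\,\mathcal{E}(\psi_n)=\frac{\sqrt 2\, c_n}{r}\,\mathcal{P}(\psi_n)+O(1).
\]

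Combining the two bounds yields $2\mathcal{P}(\psi_n)\le C_1|S_n^r|+\tfrac{\sqrt 2 c_n}{r}\mathcal{P}(\psi_n)+O(1)$, that is
\[
\Big(2-\frac{\sqrt 2\, c_n}{r}\Big)\mathcal{P}(\psi_n)\le C_1\,|S_n^r|+O(1).
\]
Since $r>c/\sqrt 2$ and $c_n\to c$, the coefficient $2-\sqrt 2 c_n/r\to 2-\sqrt 2 c/r>0$ is bounded below by a positive constant for large $n$, and as $\mathcal{P}(\psi_n)\to+\infty$ I conclude $|S_n^r|\to+\infty$. The crux of the whole argument — and the source of the sharp hypothesis $r>c/\sqrt 2$ — is the control of $\int s_n\,\partial_{x_1}\theta_n$ over the \emph{infinite-measure} region $\{\rho_n\ge r\}$: it is exactly here that \eqref{eq:pointcruc} is indispensable and that the gain factor $\sqrt 2/r$ must beat $2$, i.e. $\sqrt 2 c/r<2$. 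The remaining points are bookkeeping that I would need to verify carefully: that the balls of Proposition \ref{prop:vortices} absorb all the small-modulus set so that the lifting and the uniform estimate $|\nabla\theta_n|\le C_1$ genuinely hold on $\Omega_n^{\mathrm{out}}$, and that all boundary contributions from $\cup_k B_k^n$ enter only as $O(1)$, as guaranteed by Lemma \ref{lem:O(1)}.
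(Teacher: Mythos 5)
Your proof is correct, and while it rests on the same skeleton as the paper's --- Proposition \ref{prop:vortices} with some $r_0<c/\sqrt{2}$ to confine the small-modulus set to finitely many balls, the lifting $\psi_n=\rho_n e^{i\theta_n}$ outside them with $|\nabla\theta_n|\le C_1$ from Lemma \ref{lem:bound}, the identity \eqref{eq:stimmomassa} of Lemma \ref{lem:O(1)}, and the splitting of the exterior region along $\{\rho_n<r\}$ versus $\{\rho_n\ge r\}$ --- your finishing move is genuinely different. The paper argues by contradiction: assuming $|S_n^r|=O(1)$, it writes $I^{c_n}(\psi_n)=\int_{\{\rho_n\ge r\}} l(\rho_n,\theta_n)+O(1)$ with $l=e-\frac{c_n}{2}(1-\rho^2)\partial_{x_1}\theta$, and uses the $\e$-Young inequality $|\frac{c}{2}(1-\rho^2)\partial_{x_1}\theta|\le \frac{(1-\rho^2)^2}{4(1+\e)}+\frac{c^2(1+\e)}{4}(\partial_{x_1}\theta)^2$ to obtain the pointwise coercivity $l\ge \e_0\, e$ on $\{\rho_n\ge r\}$ (possible precisely because $\frac{c^2}{4r^2}<\frac12$), whence $O(1)=I^{c_n}(\psi_n)\ge \e_0\,\mathcal{E}(\psi_n)+O(1)$, a contradiction. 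You instead estimate the momentum directly via the sharp bound \eqref{eq:pointcruc} --- which is nothing but the optimized, parameter-free form of that same Young inequality, and is the tool the paper itself uses in Proposition \ref{lem:lemmacrucslab} --- arriving at the linear inequality $\bigl(2-\sqrt{2}\,c_n/r\bigr)\mathcal{P}(\psi_n)\le C_1|S_n^r|+O(1)$. This buys you two things: the argument is direct rather than by contradiction, and it is quantitative, yielding $|S_n^r|\ge \kappa\,\mathcal{E}(\psi_n)+O(1)$ for some $\kappa>0$ rather than mere divergence; it also needs only the two terms $\rho^2(\partial_{x_1}\theta)^2$ and $(1-\rho^2)^2$ of the energy density, whereas the paper's route establishes coercivity of the full Lagrangian density including $|\nabla\rho|^2$. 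Two small bookkeeping remarks: your second integral should formally be over $\Omega_n^{\mathrm{out}}\cap\{\rho_n\ge r\}$ rather than $\{\rho_n\ge r\}$, since the lifting may fail inside the balls even where $\rho_n\ge r$, but this only shrinks the domain of a nonnegative integrand and is harmless; and all the steps you flagged for verification are indeed supplied by the paper --- Proposition \ref{prop:vortices} guarantees $\rho_n\ge r_0$ outside the balls so the lifting and the bound $|\nabla\theta_n|=|u_n\nabla v_n-v_n\nabla u_n|/\rho_n^2\le C_1$ are legitimate, Lemma \ref{lem:O(1)} provides exactly the $O(1)$ boundary bookkeeping in \eqref{eq:stimmomassa}, and $I^{c_n}(\psi_n)=O(1)$ follows from Theorem \ref{teo:almost}(1) together with \eqref{eq:A bounded}, which also justifies your opening claim $\mathcal{P}(\psi_n)\to+\infty$.
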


\begin{proof} We recall the form of the energy for functions $\psi$ given by a lifting $\psi = \rho e^{i \theta}$:

	$$e(\rho, \theta)= \frac 12  \left(|\nabla \rho|^2 + |\nabla \theta|^2\rho^2  \right)+\frac 14  \left(1-|\rho|^2\right)^2.$$
	
	The following function represents the lagrangian in the vortexless case, and is an approximation of the real lagrangian in view of \eqref{eq:stimmomassa}:
	
	$$l(\rho, \theta)= e(\rho, \theta)- \frac{c_n}{2} (1-\rho^2)\partial_{x_1} \theta.$$
	
	Assume by contradiction that $|S_n^r|$ is bounded for some $r > \frac{c}{\sqrt{2}}$. Observe that:
	
	$$ I^{c_n} (\psi_n ) = \int_{\R^3 \setminus \cup_{k=1}^N B_k^n} l(\rho_n, \theta_n) + O(1)= \int_{\{\rho_n \geq r\}} l(\rho_n, \theta_n) + O(1).$$
	
	We now use the inequality $|\frac c 2 (1-\rho_n^2)\partial_{x_1} \theta_n| \leq \frac{(1-\rho^2)^2}{4 (1+\e)} + \frac{c^2}{4} (\partial_{x_1} \theta_n)^2(1+\e)$ with suitable $\e>0$ to obtain:
	
	$$ \int_{\{\rho_n \geq r\}} l(\rho_n, \theta_n) \geq  \int_{\{\rho_n \geq r \} } \frac 1 2  |\nabla \rho_n|^2 + \left[ \frac 1 2 - \frac{c^2 (1+\e)}{4\rho_n^2} \right] |\nabla \theta_n|^2\rho_n^2 + \frac{\e}{1+\e}  \frac{(1-\rho_n^2)^2}{4}$$ $$ \geq  \e_0 \int_{\{\rho_n \geq r \}} e(\rho_n, \theta_n) = \e_0 \, \mathcal{E}(\psi_n) + O(1),$$
	
	for suitable  $\e_0>0$. Then,
	
	$$O(1) = I^{c_n}(\psi_n) \geq \e_0 \, \mathcal{E}(\psi_n) + O(1),$$
	and this allows us to conclude.

\end{proof}

\subsection{Proof of Theorem \ref{teo:3}}

With all the results above we can inmediately conclude the proof of Theorem \ref{teo:3}. Indeed, by \eqref{eq:stimmomass3} and Sobolev inequality, we have that

$$ \int_{\R^3} (1-\rho_n)^6 = O(1).$$

If $\mathcal{E}(\psi_n) \to +\infty$, Proposition \ref{prop:ma} implies $|S_n^r|$ is unbounded for $r > \frac{c}{\sqrt{2}}$, and this is a contradiction with the above estimate. Hence $\mathcal{E}(\psi_n)$ is bounded. By Fatou Lemma, the solution $\psi_0$ given in Proposition \ref{ascoli} has finite energy, concluding the proof.

\section{Proof of Theorem \ref{teo:all}}

In this section we prove the compactness criterion given in Theorem \ref{teo:all}. The proof follows some of the ideas of the previous section, but with important differences. As previously, we will be done if we show that $\mathcal{E}(\psi_n)$ is bounded. 

By \eqref{control}, we have that $\psi_n \neq 0$ outside $B(0,R)$; as a consequence, 
$\psi_n$ admit a lifting $\psi_n(x) = \rho_n(x) e^{i \theta_n(x)}$ for all  $x \in \R^2 \setminus B(0,R)$. This is a consequence of the fact that $\psi_n$ have finite energy, see \cite{gravejat-AIHP}[Lemma 15]. In the vortexless case, this lifting holds in the whole euclidean space.

Next lemma is a version of Lemma \ref{lem:O(1)}:

\begin{lem} \label{jopeta}
 Take $c \in (0,\ \sqrt{2})$, $c_n \in E$ with $c_n \to c$ and $\psi_n$ the solutions given by Theorem \ref{teo:almost}. Assume also that there exists $R>0$ and $\delta>0$ such that \eqref{control} is satisfied. Then
\begin{equation}\label{eq:stimmomass}
\mathcal{P}(\psi_n)=\frac 12 \int_{B(0,R)^c}(1-\rho_n^2)\partial_{x_1}\theta_n +O(1),
\end{equation}
\begin{equation}\label{eq:stimmomass2}
c \mathcal{P}(\psi_n)= \int_{B(0,R)^c}\rho_n^2|\nabla \theta_n|^2 +O(1),
\end{equation}
\begin{equation}\label{eq:stimmomass3}
 \int_{B(0,R)^c}|\nabla \rho_n|^2 =O(1).
\end{equation}
\end{lem}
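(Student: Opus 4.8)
The plan is to mirror the three–dimensional argument of Lemma \ref{lem:O(1)}, the only structural simplification being that in the planar case the region to be excised is the exterior of a single ball $B(0,R)$ rather than a finite union of balls. The starting point is the lifting $\psi_n = \rho_n e^{i\theta_n}$ on $\R^2\setminus B(0,R)$, which exists because finite energy solutions have total degree zero (see \cite{gravejat-AIHP}[Lemma 15]). The first thing I would establish is the analogue of \eqref{boundtheta}: hypothesis \eqref{control} guarantees $\rho_n\geq\delta$ on the fixed compact circle $\partial B(0,R)$, and since $\nabla\theta_n = (u_n\nabla v_n - v_n\nabla u_n)/\rho_n^2$ with $\nabla u_n,\nabla v_n$ and $\psi_n$ uniformly bounded by Lemma \ref{lem:bound}, we get $|\nabla\theta_n|\leq C$ on $\partial B(0,R)$ uniformly in $n$. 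Integrating the tangential derivative along the circle then yields a uniform oscillation bound $|\theta_n(p)-\theta_n(q)|\leq C$ for all $p,q\in\partial B(0,R)$. This is the crucial substitute for an $L^\infty$ bound on $\theta_n$, which we do not have.

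For \eqref{eq:stimmomass} I would split $\mathcal{P}(\psi_n)$ into the contribution of $B(0,R)$, which is $O(1)$ because $B(0,R)$ is fixed and $\psi_n,\partial_{x_1}\psi_n$ are uniformly bounded, and the contribution of $B(0,R)^c$, on which the lifting gives $\mathcal{P}=\frac12\int_{B(0,R)^c}\partial_{x_1}(\rho_n\sin\theta_n-\theta_n)+(1-\rho_n^2)\partial_{x_1}\theta_n$. For each fixed $n$ the decay estimates of Lemma \ref{lem:decay} justify integrating $\partial_{x_1}(\rho_n\sin\theta_n-\theta_n)$ by parts with no contribution from infinity (the integrand decays like $|x|^{-3}$, beating the $|x|$ growth of the circumference), reducing it to $\int_{\partial B(0,R)}(\rho_n\sin\theta_n-\theta_n)\eta_1$. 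Here $\rho_n\sin\theta_n=v_n$ is uniformly bounded, while $\int_{\partial B(0,R)}\theta_n\eta_1=\int_{\partial B(0,R)}(\theta_n-\theta_n(p_0))\eta_1=O(1)$ using $\int_{\partial B(0,R)}\eta_1=0$ together with the oscillation bound; this gives \eqref{eq:stimmomass}.

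For \eqref{eq:stimmomass2} I would multiply the first equation of \eqref{eq:GPstripvarrp} by $\theta_n$ and integrate over $B(0,R)^c$. After integration by parts (the boundary term at infinity vanishing by Lemma \ref{lem:decay}) one obtains $\frac{c_n}{2}\int_{B(0,R)^c}(1-\rho_n^2)\partial_{x_1}\theta_n-\int_{B(0,R)^c}\rho_n^2|\nabla\theta_n|^2$ equal to a boundary integral $\int_{\partial B(0,R)}\theta_n\,(G_n\cdot\eta)$, where $G_n=\frac{c_n}{2}(1-\rho_n^2)e_1-\rho_n^2\nabla\theta_n$ is precisely the field \eqref{defG} written through the lifting. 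A direct computation using \eqref{eq:GPellip} shows $\nabla\cdot G_n=0$ in all of $\R^2$ (the field is defined through $u_n,v_n$, hence smooth even inside $B(0,R)$), so by the divergence theorem $\int_{\partial B(0,R)}G_n\cdot\eta=0$; subtracting the constant $\theta_n(p_0)$ and invoking the oscillation bound makes the boundary integral $O(1)$. Combining with \eqref{eq:stimmomass} yields $c_n\mathcal{P}(\psi_n)=\int_{B(0,R)^c}\rho_n^2|\nabla\theta_n|^2+O(1)$, which is \eqref{eq:stimmomass2} up to the harmless replacement $c_n\to c$.

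Finally, for \eqref{eq:stimmomass3} I would exploit the fact that in dimension two the Pohozaev identity of Lemma \ref{lem:poho} loses its gradient term and reads simply $c_n\mathcal{P}(\psi_n)=\frac12\int_{\R^2}(1-\rho_n^2)^2$. Writing $\mathcal{E}(\psi_n)=\frac12\int_{B(0,R)^c}|\nabla\rho_n|^2+\frac12\int_{B(0,R)^c}\rho_n^2|\nabla\theta_n|^2+\frac14\int_{\R^2}(1-\rho_n^2)^2+O(1)$ and substituting both \eqref{eq:stimmomass2} and this Pohozaev identity into $I(\psi_n)=\mathcal{E}(\psi_n)-c_n\mathcal{P}(\psi_n)$, the $\rho_n^2|\nabla\theta_n|^2$ and $(1-\rho_n^2)^2$ contributions each supply $\tfrac12 c_n\mathcal{P}(\psi_n)$ and together cancel the term $-c_n\mathcal{P}(\psi_n)$ exactly, leaving $I(\psi_n)=\frac12\int_{B(0,R)^c}|\nabla\rho_n|^2+O(1)$. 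Since $I(\psi_n)=O(1)$ by Theorem \ref{teo:almost}, this gives \eqref{eq:stimmomass3}. The main obstacle throughout is the absence of a uniform $L^\infty$ bound on the phase $\theta_n$; the whole argument hinges on replacing it by the oscillation bound on $\partial B(0,R)$, which in turn relies essentially on the lower bound $\rho_n\geq\delta$ furnished by hypothesis \eqref{control} — this is exactly the role that assumption plays in Theorem \ref{teo:all}.
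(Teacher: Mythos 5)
Your proposal is correct and is precisely the argument the paper intends: its proof of Lemma \ref{jopeta} consists of the single remark that it is ``completely analogue'' to Lemma \ref{lem:O(1)}, and you carry out exactly that adaptation --- the oscillation bound for $\theta_n$ on $\partial B(0,R)$ from \eqref{control}, the zero-average of $\eta_1$ after subtracting $\theta_n(p_0)$, the globally divergence-free field $G_n$ to kill the boundary flux, and the $d=2$ Pohozaev identity $c_n\mathcal{P}(\psi_n)=\frac12\int_{\R^2}(1-\rho_n^2)^2$ replacing the $d=3$ version, which correctly yields $I^{c_n}(\psi_n)=\frac12\int_{B(0,R)^c}|\nabla\rho_n|^2+O(1)$. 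Your parenthetical caution about $c_n$ versus $c$ in \eqref{eq:stimmomass2} is well placed (the paper's statement has the same slight abuse, and the identity is naturally proved and used with $c_n$).
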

\begin{proof}

The proof is completely analogue to that of Lemma \ref{lem:O(1)}. Observe that in the vortexless case we have exact identities in \eqref{eq:stimmomass}, \eqref{eq:stimmomass2}, \eqref{eq:stimmomass3}.

\end{proof}.

With Lemma \ref{lem:O(1)} in hand, we can adapt the proof of Proposition \ref{prop:ma} to our setting, obtaining the following result:

\begin{prop} \label{prop:ma2}	Take $c \in (0,\ \sqrt{2})$, $c_n \in E$ with $c_n \to c$ and $\psi_n$ the solutions given by Theorem \ref{teo:almost}. Assume that:
	
	\[ \mathcal{E}(\psi_n) \to + \infty. \]
	
	Then, for any $r \in (\frac{c}{\sqrt{2}}, 1)$, $|S_n^r| \to +\infty$.
	
\end{prop}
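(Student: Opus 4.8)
The plan is to mimic almost verbatim the proof of Proposition \ref{prop:ma}, replacing the three-dimensional exterior region $\R^3 \setminus \cup_{k=1}^N B_k^n$ by the planar exterior $B(0,R)^c = \R^2 \setminus B(0,R)$ and invoking Lemma \ref{jopeta} in place of Lemma \ref{lem:O(1)}. I would argue by contradiction, supposing that $|S_n^r|$ stays bounded along a subsequence for some fixed $r \in (\frac{c}{\sqrt 2}, 1)$. Recall the energy density of a lifting $\psi = \rho e^{i\theta}$, namely $e(\rho,\theta) = \frac 12(|\nabla \rho|^2 + |\nabla \theta|^2 \rho^2) + \frac 14 (1-\rho^2)^2$, together with the vortexless Lagrangian density $l(\rho,\theta) = e(\rho,\theta) - \frac{c_n}{2}(1-\rho^2)\partial_{x_1}\theta$. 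Since in dimension $2$ the lifting of $\psi_n$ is available precisely in $B(0,R)^c$ (Remark \ref{r2}), this is the natural region of integration.

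First I would use \eqref{eq:stimmomass} of Lemma \ref{jopeta} to write
$$ I^{c_n}(\psi_n) = \int_{B(0,R)^c} l(\rho_n, \theta_n) + O(1).$$
Because $|S_n^r|$ is bounded by assumption and every integrand is controlled in $L^\infty$ by Lemma \ref{lem:bound}, the contribution of $\{\rho_n < r\}$ is $O(1)$, so that
$$ I^{c_n}(\psi_n) = \int_{\{\rho_n \geq r\}} l(\rho_n, \theta_n) + O(1).$$
Next I would bound the cross term by a Young-type inequality: for $\e>0$,
$$ \left| \tfrac{c_n}{2}(1-\rho_n^2)\partial_{x_1}\theta_n \right| \leq \frac{(1-\rho_n^2)^2}{4(1+\e)} + \frac{c_n^2 (1+\e)}{4}(\partial_{x_1}\theta_n)^2,$$
yielding on $\{\rho_n \geq r\}$
$$ \int_{\{\rho_n \geq r\}} l(\rho_n, \theta_n) \geq \int_{\{\rho_n \geq r\}} \frac 12 |\nabla \rho_n|^2 + \left[ \frac 12 - \frac{c_n^2(1+\e)}{4\rho_n^2}\right] |\nabla \theta_n|^2 \rho_n^2 + \frac{\e}{1+\e}\,\frac{(1-\rho_n^2)^2}{4}.$$
The crucial point, which is exactly where the threshold $r > \frac{c}{\sqrt 2}$ enters, is that $\rho_n \geq r$ forces $\frac{c_n^2}{4\rho_n^2} \leq \frac{c_n^2}{4r^2} < \frac 12$ for $n$ large (since $c_n \to c$ and $r^2 > c^2/2$). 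Hence I can fix $\e>0$ small enough that the bracket is bounded below by a uniform constant $\e_0>0$, which together with the positivity of the other coefficients gives
$$ \int_{\{\rho_n \geq r\}} l(\rho_n, \theta_n) \geq \e_0 \int_{\{\rho_n \geq r\}} e(\rho_n, \theta_n) = \e_0 \, \mathcal{E}(\psi_n) + O(1),$$
the last equality because the energy inside $B(0,R)$ and on $\{\rho_n<r\}$ is $O(1)$. Combining the displays yields $O(1) = I^{c_n}(\psi_n) \geq \e_0 \, \mathcal{E}(\psi_n) + O(1)$, contradicting $\mathcal{E}(\psi_n) \to +\infty$.

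The computation above is genuinely dimension-independent, so the real subtlety does not lie here but in the inputs from Lemma \ref{jopeta}: in the plane the lifting exists only outside the single ball $B(0,R)$, and the $O(1)$ error terms originate entirely from boundary integrals on $\partial B(0,R)$. These are controlled through the oscillation estimate $|\theta_n(p)-\theta_n(q)| = O(1)$ for $p,q \in \partial B(0,R)$ (as in \eqref{boundtheta}), rather than through any global bound on $\|\theta_n\|_{L^\infty}$, which is unavailable. Once Lemma \ref{jopeta} is granted exactly as in the three-dimensional Lemma \ref{lem:O(1)}, the variational comparison closes the argument, and I expect the only point requiring care to be the verification that these single-sphere boundary contributions are indeed uniformly bounded.
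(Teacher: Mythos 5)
Your proposal is correct and coincides with the paper's own argument: the paper proves Proposition \ref{prop:ma2} precisely by citing Lemma \ref{jopeta} as the planar substitute for Lemma \ref{lem:O(1)} and repeating the variational comparison of Proposition \ref{prop:ma}, which you have reproduced faithfully, including the role of the threshold $r>c/\sqrt{2}$ and the oscillation bound \eqref{boundtheta} controlling the single-sphere boundary terms. No gaps.
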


Next result is analogue to Lemma \ref{lem:51}. The only difference is that now we do not know that \eqref{eq:Sob} holds, but instead we have \eqref{eq:stimmomass3}.

\begin{lem} \label{lem:61} Under the assumptions of Theorem \ref{teo:all}, assume that for some $r\in (0, 1)$, $|S_n^{r}| \to +\infty$. Then, there exists $\xi_n \in S_n^r$ and $R_n \to +\infty$ such that:
	
	$$\int_{B(\xi_n, R_n)}  |\nabla \rho_n |^2 + |\partial_{x_2} \psi_n |^2  \to 0.$$
	
\end{lem}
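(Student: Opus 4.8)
The plan is to reproduce the ball-packing and pigeonhole scheme of Lemma \ref{lem:51}, the only change being that the integrand to be distributed among disjoint balls now consists of the $L^2$ quantities $|\nabla\rho_n|^2$ and $|\partial_{x_2}\psi_n|^2$, which play the role of the Sobolev $L^6$ bound used in the three-dimensional case (and which is unavailable here). Concretely, I would first show that both quantities are globally of order $O(1)$, and then extract a large ball on which their integral is small by an averaging argument.

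\textbf{Step 1 (global $L^2$ bounds).} The term $\int_{\R^2}|\partial_{x_2}\psi_n|^2$ equals $I^{c_n}(\psi_n)$ by Lemma \ref{lem:poho2} and \eqref{eq:A bounded} (note that $d=2$ gives $I(\psi)=\int|\partial_{x_2}\psi|^2$), and this is bounded uniformly in $n$ by Theorem \ref{teo:almost}. For the gradient of the modulus I would split $\R^2=B(0,R)\cup B(0,R)^c$: outside the fixed ball, $\int_{B(0,R)^c}|\nabla\rho_n|^2=O(1)$ is exactly \eqref{eq:stimmomass3} of Lemma \ref{jopeta}; inside, one uses the pointwise inequality $|\nabla\rho_n|\le|\nabla u_n|+|\nabla v_n|$ (valid a.e., since $\nabla\rho_n=(u_n\nabla u_n+v_n\nabla v_n)/\rho_n$ with $|u_n|,|v_n|\le\rho_n$) together with the uniform derivative bound of Lemma \ref{lem:bound}, so that $\int_{B(0,R)}|\nabla\rho_n|^2\le C^2|B(0,R)|=O(1)$ because $B(0,R)$ has fixed measure. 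Hence $\int_{\R^2}\big(|\nabla\rho_n|^2+|\partial_{x_2}\psi_n|^2\big)=O(1)$.

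\textbf{Step 2 (packing and pigeonhole).} Set $R_n=|S_n^r|^{1/4}$, so that $R_n\to+\infty$ while $R_n^2/|S_n^r|\to0$. Starting from an arbitrary $x_1^n\in S_n^r$, I would pick points $x_{k+1}^n\in S_n^r\setminus\bigcup_{j\le k}B(x_j^n,2R_n)$ as long as this set is nonempty; this greedy selection yields pairwise $2R_n$-separated points, hence disjoint balls $B(x_k^n,R_n)$, and it cannot stop before the $j_n$-th step with $j_n=\big[\,|S_n^r|/(4\pi R_n^2)\,\big]\sim|S_n^r|^{1/2}/(4\pi)\to+\infty$, since otherwise $S_n^r$ would be covered by $j$ balls of total area $4\pi j R_n^2<|S_n^r|$, a contradiction. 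As these $j_n$ balls are disjoint, Step 1 gives $\sum_{k=1}^{j_n}\int_{B(x_k^n,R_n)}\big(|\nabla\rho_n|^2+|\partial_{x_2}\psi_n|^2\big)=O(1)$, so at least one index $k$ satisfies $\int_{B(x_k^n,R_n)}\big(|\nabla\rho_n|^2+|\partial_{x_2}\psi_n|^2\big)\le O(1)/j_n\to0$; taking $\xi_n=x_k^n\in S_n^r$ finishes the proof.

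The argument is routine once the uniform bounds are in place, so the only delicate point — and the place where hypothesis \eqref{control} enters — is the global estimate $\int_{\R^2}|\nabla\rho_n|^2=O(1)$ in Step 1. Outside $B(0,R)$ this is the content of Lemma \ref{jopeta}, but inside $B(0,R)$ the lifting is unavailable and $\rho_n$ may vanish; the point is that \eqref{control} confines all vortices to the \emph{fixed} ball $B(0,R)$, so the $L^\infty$ control of $\nabla\rho_n$ on a set of bounded measure suffices and no Sobolev-type inequality (which fails in dimension two, see Remark \ref{r1}) is required.
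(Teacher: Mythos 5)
Your proposal is correct and follows essentially the same route as the paper, which proves Lemma \ref{lem:61} by repeating the packing--pigeonhole scheme of Lemma \ref{lem:51} with the uniform bounds \eqref{eq:stimmomass3} and \eqref{eq:A bounded} in place of the Sobolev estimate \eqref{eq:Sob}, and with the exponent adjusted exactly as you do ($R_n=|S_n^r|^{1/4}$ in dimension two). Your Step 1, which handles the contribution of the fixed ball $B(0,R)$ via the pointwise bound $|\nabla\rho_n|\leq|\nabla\psi_n|\leq C$ from Lemma \ref{lem:bound}, merely makes explicit a detail the paper leaves implicit.
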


\begin{proof}
	
	The proof is analogue to that of Lemma \ref{lem:51}.
	
\end{proof}

\subsection{Proof of Theorem \ref{teo:all}}

Assume by contradiction that $\mathcal{E}(\psi_n) \to +\infty$. By Proposition \ref{prop:ma2}, we can apply Lemma \ref{lem:61} to a value $r$ satisfying that:

$$\frac{c}{\sqrt{2}} < r < \sqrt{ \frac{2}{3} (1+c^2/4)} <1.$$ Notice that this is possible if $c<\sqrt{2}$. 

Let $\xi_n \in \R^d$ given by Lemma \ref{lem:61} and define $\tilde{\psi}_n(x)= \psi_n(x- \xi_n)$. Up to a subsequence we have that:

$$ \tilde{\psi}_n \to {\psi}_0 \mbox{ in } C^k_{loc}(\R^d).$$

Taking into account Remark \ref{remark morse}, $ind(\psi_0) \leq 1$. By Lemma \ref{lem:51}, $\psi_0$ depends only of the $x_1$ variable. Moreover $\nabla \rho_0 =0$ where $\rho_0 = |{\psi}_0| \leq r$. That is, $\psi_0 (x_1)$ is a $1D$ circular solution,

$$ \psi_0(x_1) = \rho_0 e^{i \omega  (x_1 - t)},$$
where  $\omega^2 + c \omega + \rho_0^2=1$. By the choice of $r$, we have that $\rho_0^2 < \frac{2}{3} (1+c^2/4)$. But those solutions have infinite Morse index, as shown in Proposition \ref{appendix} (see Appendix). This contradiction shows that $\mathcal{E}(\psi_n)$ is bounded.

 By Fatou Lemma, the solution $\psi_0$ given in Proposition \ref{ascoli} has finite energy, concluding the proof.

\begin{remark} Let us point out that Theorem \ref{teo:3} does not need the information on the Morse index of the solutions. The main tool there is that $I^{c_n}(\psi_n) = O(1)$. Instead, Theorem \ref{teo:all} requires in a essential way that the Morse index of the solutions obtained is bounded.
\end{remark}

\section{Appendix (by Rafael Ortega)}

In this appendix we prove the following result:

\begin{prop} \label{appendix} Given $t \in \R$, $\omega_0 \in \R$, $\rho_0 >0$ satisfying that $\omega_0^2 + c \omega_0 + \rho_0^2=1$, the function $\psi_0(x) = \rho_0 e^{i \omega_0  (x - t)}$ is a (infinite energy) solution of \eqref{eq:GPellip}. Assume also that $\rho_0^2 < \frac{2}{3} (1+c^2/4)$. Then its Morse index, as defined in Definition \ref{Morse}, is infinity. 
\end{prop}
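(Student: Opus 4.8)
The plan is to show that $Q$, the quadratic form of \eqref{defQ} evaluated at $\psi_0$, is negative on an arbitrarily high dimensional space of test functions. The key structural observation is that, since $\psi_0=\rho_0 e^{i\omega_0(x_1-t)}$ depends only on $x_1$ and $|\psi_0|^2\equiv\rho_0^2$ is constant, $Q$ is invariant under translations in the transverse variables $\tilde x$; moreover, on functions with pairwise disjoint supports every term in the symmetric bilinear form polarizing $Q$ is an integral of a pointwise product (including $2\lan\phi,\psi_0\ran^2$, whose polarization is $2\lan\phi,\psi_0\ran\lan\tilde\phi,\psi_0\ran$), so these cross terms vanish. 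Hence it suffices to produce a single $\phi_0\in C_0^\infty(\R^d;\C)$ with $Q(\phi_0)<0$: translating it in the $x_2$ direction by multiples of a length exceeding the diameter of its support yields, for every $m$, functions $\phi_1,\dots,\phi_m$ with disjoint supports, each with $Q(\phi_j)=Q(\phi_0)<0$, and then $Q\big(\sum_j c_j\phi_j\big)=\big(\sum_j c_j^2\big)Q(\phi_0)<0$ on their span.

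To find $\phi_0$ I would use the gauge substitution $\phi=e^{i\omega_0(x_1-t)}g$ with $g=a+ib$, $a,b$ real. A direct computation using $\partial_{x_1}\psi_0=i\omega_0\psi_0$ together with the relation $\omega_0^2+c\omega_0+\rho_0^2=1$ shows that all the zeroth order terms proportional to $|g|^2$ cancel \emph{exactly}, and $Q$ collapses to
\[ Q(\phi)=\int_{\R^d}\Big(|\nabla a|^2+|\nabla b|^2+\mu\,(a\,\partial_{x_1}b-b\,\partial_{x_1}a)+2\rho_0^2\,a^2\Big),\qquad \mu:=2\omega_0+c. \]
This cancellation is the heart of the matter: it is what leaves room for negativity. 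Passing to Fourier variables and minimizing over $b$ (the minimizer solving $\Delta b=-\mu\,\partial_{x_1}a$), the form reduces to the symbol $\int_{\R^d}\big(|\xi|^2+2\rho_0^2-\mu^2\xi_1^2/|\xi|^2\big)|\hat a(\xi)|^2$, which assumes negative values precisely when $\mu^2>2\rho_0^2$. Since $\mu^2=4(\omega_0^2+c\omega_0)+c^2=4(1-\rho_0^2)+c^2$, this condition reads $4+c^2>6\rho_0^2$, i.e. $\rho_0^2<\tfrac23(1+c^2/4)$, which is exactly the hypothesis.

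To convert this into an honest compactly supported direction I would fix $k>0$ with $k^2<\mu^2-2\rho_0^2$ and a cutoff $\chi\in C_0^\infty(\R^d)$, and set, with $\chi_R(x)=\chi(x/R)$,
\[ a=\cos(kx_1)\,\chi_R,\qquad b=-\tfrac{\mu}{k}\sin(kx_1)\,\chi_R. \]
Retaining only the terms where no derivative falls on $\chi_R$, the integrand of the reduced form equals $\chi_R^2\big[(k^2-\mu^2)\sin^2(kx_1)+2\rho_0^2\cos^2(kx_1)\big]$; note that the gyroscopic term contributes $\mu(a\,\partial_{x_1}b-b\,\partial_{x_1}a)=-\mu^2\chi_R^2$ with no oscillation at all. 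Replacing $\sin^2,\cos^2$ by their mean $\tfrac12$ produces a leading contribution $\tfrac12(k^2-\mu^2+2\rho_0^2)\,R^d\|\chi\|_{L^2}^2<0$, while the terms containing $\nabla\chi_R$ are $O(R^{d-2})$ and the oscillatory-times-slowly-varying remainders are $o(R^d)$. Thus for $R$ large $Q(\phi_R)<0$ with $\phi_R=e^{i\omega_0(x_1-t)}(a+ib)\in C_0^\infty(\R^d;\C)$, and the first paragraph upgrades this to $ind(\psi_0)=+\infty$.

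The main obstacle is precisely this last technical point: making rigorous that the cutoff and oscillatory remainder terms are genuinely of lower order than the negative bulk $\sim -R^d$. One must justify the averaging of the trigonometric factors against the slowly varying weight $\chi_R^2$ and bound the contributions of $\nabla\chi_R$ (an alternative, avoiding the explicit trigonometric bookkeeping, is to choose $\hat a$ as a smooth bump supported in the open set $\{|\xi|^2+2\rho_0^2<\mu^2\xi_1^2/|\xi|^2\}$ and take a sufficiently concentrated wave packet, paying only an arbitrarily small penalty in the symbol). Everything else — the gauge reduction, the exact cancellation, and the disjoint-support/translation argument — is routine once the sharp algebraic identity $\mu^2-2\rho_0^2=4+c^2-6\rho_0^2$ is in place.
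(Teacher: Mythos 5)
Your proposal is correct, and the threshold you derive is exactly the paper's: your gauge $\phi=e^{i\omega_0(x_1-t)}g$ is the composition of the paper's two changes of variables ($\phi=e^{icx/2}\psi$ followed by $\zeta=e^{i\omega_1 s}\eta$, with $\omega_1=\omega_0+c/2$, so your $\mu=2\omega_1$ and your condition $\mu^2>2\rho_0^2$ is the paper's $\rho_0^2<2\omega_1^2$), and your reduced quadratic form $\int |\nabla a|^2+|\nabla b|^2+\mu(a\,\partial_{x_1}b-b\,\partial_{x_1}a)+2\rho_0^2a^2$ is precisely the form whose Euler--Lagrange system is the paper's constant-coefficient equation \eqref{constant}. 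From that common point, however, the mechanisms diverge genuinely. The paper stays one-dimensional: it writes down an explicit solution of \eqref{constant} with infinitely many conjugate points $\tfrac{2\pi n}{\sqrt{4\omega_1^2-2\rho_0^2}}$ and invokes classical Jacobi/conjugate-point theory (Gelfand--Fomin, Theorem 3$'$) to conclude that the 1D form is negative on every interval longer than one conjugate gap, obtaining disjointly supported $\tau_k\in C_0^\infty(\R)$ with $Q_1(\tau_k)<0$; it then tensorizes, setting $\iota_k=\chi_k(\tilde x)\tau_k(x_1)$ with $\int\chi_k^2=1$ and $\int|\nabla\chi_k|^2$ small. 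You instead eliminate $b$ to expose the Fourier symbol $|\xi|^2+2\rho_0^2-\mu^2\xi_1^2/|\xi|^2$, realize its negativity by an explicit $d$-dimensional wave packet $a=\cos(kx_1)\chi_R$, $b=-\tfrac{\mu}{k}\sin(kx_1)\chi_R$ with $k^2<\mu^2-2\rho_0^2$, and produce infinitely many negative directions by translating in $x_2$ (the disjoint-support/translation-invariance argument is sound, since every term in the polarized form, including $2\lan\phi,\psi_0\ran\lan\tilde\phi,\psi_0\ran$, is local). What each route buys: the paper's conjugate-point argument outsources all the delicate analysis to Sturm--Jacobi theory and requires no cutoff estimates at all, while yours is self-contained, makes the instability condition transparent as symbol negativity via $\mu^2-2\rho_0^2=4+c^2-6\rho_0^2$, and your optimal amplitude $-\mu/k$ does reproduce the sharp average $\tfrac12(k^2-\mu^2+2\rho_0^2)<0$. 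The remainder estimates you flag as the main obstacle are indeed routine: a single integration by parts in $x_1$ bounds the oscillatory terms $\int\cos(2kx_1)\chi_R^2$ and the cross terms against $\nabla\chi_R$ by $O(R^{d-1})$, and the $|\nabla\chi_R|^2$ terms by $O(R^{d-2})$, all $o(R^d)$ against the negative bulk, so your construction closes.
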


\begin{proof} The problem is autonomous so that we can assume $t=0$. The proof is based on the study of the $1D$ problem:

\begin{equation}\label{eq:GP1D}
 \psi'' + i c \psi' + \left(1-|\psi|^2\right)\psi=0  \ \  \text{ on } \R.
\end{equation}

By the change of variables $\phi= e^{i x c/2 } \psi$ we pass to a problem:

\begin{equation}\label{eq:GP1Dbis}
\phi'' +\left(1+ c^2/4 -|\phi|^2\right)\phi=0  \ \  \text{ on } \R.
\end{equation}

The Morse index of this problem depends on the existence of conjugate points to some solutions of the linearized equation, see for instance \cite[Chapter 5]{gelfand}. The function $\phi(x)= \rho_0 e^{i \omega_1 x}$ is a solution of \eqref{eq:GP1Dbis}, where , $\omega_1 = \omega_0 + c/2$. Observe that

\begin{equation} \label{relation}
\omega_1^2 + \rho_0^2 = 1 + c^2/4
\end{equation} 

The linearized equation to \eqref{eq:GP1Dbis} around the solution $\phi$ is:

\[ \zeta'' + (1+c^2/4) \zeta - 2 \overline{\phi(s)} \phi(s) \zeta - \phi(s)^2 \overline{\zeta}=0.
\]

We will follow the lines of \cite[Section 21]{SM} to analyze the oscillatory properties of this equation.

\[ \zeta'' + (1+c^2/4) \zeta - 2 \rho_0^2 \zeta - \rho_0^2 e^{2 i \omega_1 s }\overline{\zeta}=0.
\]

We now make the change of variable $\zeta = e^{i \omega_1 s} \eta$, to obtain a constant coefficient linear system:

\begin{equation} \label{constant}
\eta'' + 2 i \omega_1 \eta' - \rho_0^2 \eta - \rho_0^2 \overline{\eta}=0.
\end{equation}

%Let us denote $\eta= \eta_1 + i \, \eta_2$, $\sigma_i= \eta_i'$, and $\alpha = (\eta_1,\, \eta_2,\, \sigma_1,\, \sigma_2)$. Then
%\eqref{constant} reduces to solve the equation $\alpha'(s)= A \cdot \alpha(s)$, where:
%
%$$A= \left( \begin{array}{cccc} 0 & 0 & 1 & 0 \\ 0 & 0 & 0 & 1 \\ 2 \rho_0^2 & 0 & 0 & 2 \omega_1 \\ 0 & 0 & -2 \omega_1 & 0 \end{array} \right).$$
%
%The eigenvalues of $A$ are $0$ (with algebraic multiplicity 2) and $\pm \sqrt{2 \rho_0^2 - 4 \omega_1^2} $. Hence one can expect the existence of conjugate points when 

If $\rho_0^2 < 2 \omega_1^2$ (which, by \eqref{relation}, reduces to $\rho_0^2 < \frac{2}{3}(1+c^2/4)$) we can find the explicit solution to \eqref{constant}:

$$ \eta(s)= \frac{ \sin \left( s \sqrt{4 \omega_1^2 - 2\rho_0^2}  \right) }{\sqrt{4 \omega_1^2 - 2\rho_0^2}} + i \omega_1 \frac{ \cos \left( s \sqrt{4 \omega_1^2 - 2\rho_0^2}   \right) -1}{2 \omega_1^2 - \rho_0^2}.$$ 

Clearly, $\zeta(s)= e^{i \omega_1 s} \eta(s)$ has infinitely many conjugate points $\frac{2 \pi n}{\sqrt{4 \omega_1^2 - 2\rho_0^2}}$, $n \in \N$.

Given any interval $I$, the quadratic functional $\tilde{Q}_{1,I}: H_0^1(I, \mathbb{C}) \to \R$, 

$$\tilde{Q}_{1, I}(\sigma_k)= \int_{I} |\sigma_k'|^2 - (1+c^2/4 - |\phi|^2) |\sigma_k|^2 +2 (\lan \sigma_k, \phi \ran)^2 <0$$
is in the conditions of Section 29.2 of \cite{gelfand}. We can apply \cite[Theorem 3' in page 122]{gelfand} to deduce that $\tilde{Q}_{1, I}$ takes negative values as soon as the length of the interval $I$ is greater than $\frac{2 \pi }{\sqrt{4 \omega_1^2 - 2\rho_0^2}}$. Then we can find infinitely many functions $\sigma_k \in C^{\infty}_0(\R)$ \emph{ with disjoint support} such that 

$$\tilde{Q}_1(\sigma_k)= \int_{-\infty}^{\infty} |\sigma_k'|^2 - (1+c^2/4 - |\phi|^2) |\sigma_k|^2 +2 (\lan \sigma_k, \phi \ran)^2 <0.$$

We now want to pass to the original problem \eqref{eq:GP1D} and estimate its Morse index. In order to do so, define $\tau_k(s)$ by $\sigma_k(s)= e^{ics/2}\tau_k(s)$. Simple computations give:

$$ \sigma_k'(s)= (i \frac c 2 \tau_k(s) + \tau_k'(s) )e^{ics/2},$$

$$|\sigma_k'(s)|^2 = |\tau_k'(s)|^2 + \frac{c^2}{4} |\tau_k(s)|^2 + c \lan i \tau_k(s), \tau_k'(s) \ran = |\tau_k'(s)|^2 + \frac{c^2}{4} |\tau_k(s)|^2  - c \lan \tau_k(s), i \tau_k'(s) \ran.$$

Moreover,

$$ \lan \sigma_k(s), \phi(s) \ran = \lan \tau_k(s), \psi(s) \ran.$$

As a consequence $\tilde{Q}_1(\sigma_k)= Q_1(\tau_k)<0$, where

$$Q_1(\tau_k)=\int_{-\infty}^{\infty} |\tau_k'|^2  - c \lan \tau_k, i \tau_k' \ran - (1 - |\psi|^2) |\tau_k|^2 +2 (\lan \tau_k, \psi \ran)^2.$$

Observe that this is the quadratic form associated to \eqref{eq:GP1D}. 

Take now a $C_0^\infty$ function $\chi_k: \R^{d-1} \to \R^+$, and let us estimate $Q$ on the function $\iota_k(x)=\chi_k(\tilde{x}) \tau_k(x_1)$, where $Q$ is defined in \eqref{defQ}:

$$Q(\iota_k)= Q_1(\tau_k) \int_{\R^{d-1}} \chi_k(\tilde{x})^2 \, d\tilde{x} + \left( \int_{-\infty}^{+ \infty} |\tau_k(x_1)|^2 \, dx_1\right)
\left( \int_{\R^{d-1}} |\nabla \chi_k(\tilde{x})|^2 \, d\tilde{x} \right).$$

It suffices to take now $\chi_k$ such that $\int_{\R^{d-1}} \chi_k^2=1$ and $\int_{\R^{d-1}} |\nabla \chi_k|^2 $ is sufficiently small, to conclude that $Q(\iota_k )<0$. 

\medskip

Observe also that $supp \ \iota_k  \cap supp \ \iota_{k'} = \emptyset$ if $k \neq k'$, since an analogue property holds for $\sigma_k$ and $\tau_k$. Hence, $Q$ is negative definite on the vector space generated by the linearly independent functions $\{\iota_1, \ \dots, \iota_k\}$ for any $k \in \N$, concluding the proof.

\end{proof}


\begin{thebibliography}{99}
	\bibitem{AM}  A. Ambrosetti, A. Malchiodi, {Nonlinear analysis and semilinear elliptic problems,} Cambridge Studies in Advanced Mathematics, 104. Cambridge University Press, Cambridge, 2007.
	
	\bibitem{b1}{I. V. Barashenkov and V. G. Makhan'kov, }{Soliton-like bubbles in a system
		of interacting bosons, }{Phys. Lett. A 128 (1988), 52-56.}
	
	\bibitem{bgk}{H. Berestycki, T. Gallouet and O. Kavian, }{\'{E}quations de champs scalaires euclidiens non lin\'{e}aires dans le plan, }{C. R.
		Acad. Sci. Paris Sér. I Math. 297 (1983) 307-310.}
	
	\bibitem{blions}{H. Berestycki and P.-L. Lions, }{Nonlinear scalar field equations, I, }{Arch. Ration. Mech. Anal. 82 (1983) 313-346.}
	
	\bibitem{b2}{N. G. Berloff, }{Quantised vortices, travelling coherent structures and superfluid turbulence, in Stationary and Time Dependent Gross-Pitaevskii Equations, }{Contemp. Math. 473, Amer. Math. Soc., Providence, RI, 2008, pp. 27-54.}
	
	
	\bibitem{bertozzi}{A. L. Bertozzi and A. Majda, }{Vorticity and incompressible flow, }{Cambridge Texts in Applied Mathematics, Cambridge University Press 2002.}
	
	\bibitem{bgs-cmp}{F. B\'{e}thuel, P. Gravejat and J.-C. Saut, }{Traveling Waves for the Gross-Pitaevskii Equation II, }{Comm. Math. Physics 285, 567-651 (2009).}
	
	
	
	\bibitem{bgs-survey}{F. B\'{e}thuel, P. Gravejat and J.-C. Saut, }{Existence and properties of travelling waves for the Gross-Pitaevskii equation, }{Alberto Farina and Jean-Claude Saut. Stationary and time dependent
		Gross-Pitaevskii equations, 473, American Mathematical Society, pp.55-104, 2008, Contemporary
		Mathematics, 978-0-8218-4357-4. 10.1090/conm/473. http/www.ams.org/books/conm/473/.
		hal-00363329.}
	
	\bibitem{bgss}{F. B\'{e}thuel, P. Gravejat, J.-C. Saut, and D. Smets, }{Orbital stability of the black soliton for the Gross--Pitaevskii equation,} {Indiana Univ. Math. J, 57(6):2611Ð2642, 2008.}
	
	\bibitem{bgs} {F. B\'{e}thuel, P. Gravejat and Didier Smets, } {Asymptotic stability in the energy space for dark solitons of the Gross--Pitaevskii equation, } {Annales Scientifiques de lÕ'\'Ecole Normale Sup\'erieure, (2015), 48 (6), pp.1327-1381}
	
	\bibitem{bos}{F. B\'{e}thuel, G., Orlandi and D. Smets, }{ Vortex rings for the Gross-Pitaevskii equation,  J. Eur. Math. Soc. 6(1),17-94 (2004).}
	
	\bibitem{bs}{F. B\'{e}thuel and J. C. Saut, }{Travelling waves for the Gross-Pitaevskii equation I, }{Ann. Inst. Henri Poincar\'e, Physique Th\'{e}orique. 70(2), 147-238 (1999).}
	
	\bibitem{chr}{D. Chiron, }{Travelling waves for the Gross-Pitaevskii equation in dimension larger than two,}{ Nonlinear Anal., 58(1-2):175Ð204, 2004.}
	
	\bibitem{chr2}{D. Chiron, E. Pacherie, } {Coercivity for travelling waves in the Gross-Pitaevskii equation in $R^2$ for small speed}, preprint
	\bibitem{chr3}{D. Chiron, E. Pacherie, } {Smooth branch of travelling waves for the Gross-Pitaevskii equation in $R^2$ for small speed}, preprint
	\bibitem{chr-mar}{D. Chiron and M. Mari\c s, }
	{Traveling waves for nonlinear Schr\"odinger equations with nonzero conditions at infinity, }{Arch. Ration. Mech. Anal. 226, no. 1, 143Ð242, (2017)}
	
	\bibitem{el}{M.J. Esteban and P. L. Lions, }{Existence and non-existence results for semilinear elliptic problems in unbounded domains, }{Proc. Royal Soc. Edinburgh 93 A (1982), 1-14.}
	
	\bibitem{FG} {G. Fang, N. Ghoussoub, }{Second-order information on Palais-Smale sequences in the mountain pass theorem. }{Manuscripta Math. 75 (1992), no. 1, 81-95.} 
	
	\bibitem{FG2} {G. Fang, N. Ghoussoub, }{Morse-type information on Palais-Smale sequences obtained by min-max principles, }{Comm. Pure Appl. Math. 47 (1994), 1595-1653.}
	
	
	\bibitem{farina}{A. Farina, }{From Ginzburg-Landau to Gross-Pitaevskii, }{Monatsh. Math. 139, 265-269 (2003).}
	
	\bibitem{gelfand}{I. M. Gelfand and S. V. Fomin, }{Calculus of variations. Revised English edition translated and edited by Richard A. Silverman Prentice-Hall, Inc., Englewood Cliffs, N.J. 1963.}
	
	\bibitem{ge1} {P. G\'{e}rard, } {The Cauchy problem for the GrossÐ-Pitaevskii equation, } {Ann. Inst. H.  Poincar\'e ANL 23(5) (2006), 765Ð779.}
	
	\bibitem{ge2}{P. G\'{e}rard, } {The Gross-ÐPitaevskii equation in the energy space. Stationary and time dependent GrossÐ-Pitaevskii equations, }{Contemp. Math. 473, Amer. Math. Soc., Providence, RI, (2008), 129Ð148.}
	
	\bibitem{gravejat-CMP}{P. Gravejat, }{A Non-Existence Result for Supersonic traveling Waves in the Gross-Pitaevskii Equation, }{Commun. Math. Phys. 243, 93-103 (2003).}
	
	\bibitem{gravejat-AIHP}{P. Gravejat, }{Decay for traveling waves in the Gross-Pitaevskii equation, }{Ann. I. H. Poincar\'e ANL 21 (2004) 591-637.}
	
	\bibitem{gravejat-Asymp}{P. Gravejat, }{Asymptotics for the travelling waves in the Gross-Pitaevskii equation,}{Asymptot. Anal., 45(3-4):227Ð299, 2005}
	
	\bibitem{gravejat-Adv.Diff}{P. Gravejat, }{First order asymptotics for the travelling waves in the Gross-Pitaevskii equation,}{ Adv. Differential Equations, 11(3):259Ð280, 2006}
	
	\bibitem{gravejat-DIA}{P. Gravejat, }
	{Limit at infinity and nonexistence results for sonic travelling waves in the Gross-Pitaevskii equation,} {
		Differential Integral Equations 17, no. 11-12, 1213Ð1232 (2004)} 
	
	\bibitem{gross}{E.P. Gross, }{Hydrodynamics of a superfluid condensate, }{J. Math. Phys., 4(2):195-207, 1963.}
	
	
	\bibitem{gnt}{ S. Gustafson, K. Nakanishi and T.P. Tsai, }{Scattering theory for the Gross-Pitaevskii equation in three dimensions, }{Commun. Contemp. Math. 11(4) (2009), 657Ð707.}
	
	
	
	
	\bibitem{gnt2}{S. Gustafson, K. Nakanishi and T.P. Tsai, } {Scattering for the GrossÐPitaevskii equation, }{Math. Res. Letters 13(2) (2006), 273Ð285}
	
	
	\bibitem{jeanjean}{L. Jeanjean, }{On the existence of bounded Palais-Smale sequences and application to a
		Landesman-Lazer-type problem set on $\R^N$, }{Proceedings of the Royal, Society of Edinburgh, 129A, 787-809, 1999.}
	
	\bibitem{jer2}{R.L. Jerrard and D. Smets, }{Leapfrogging vortex rings for the three-dimensional Gross--Pitaevskii equation, }{ Ann. PDE 4 (2018), no. 1, Art. 4, 48 pp}
	
	
	\bibitem{jpr1}{C. A. Jones, S. J. Putterman and P. H. Roberts, }{Motions in a Bose condensate V. Stability of solitary
		wave solutions of nonlinear Schr\"{o}dinger equations in two and three dimensions, }{J. Phys. A, Math.
		Gen. 19, 2991-3011 (1986).}
	
	\bibitem{jpr2}{C. A. Jones and P. H. Roberts, }{Motions in a Bose condensate IV, }{Axisymmetric Solitary Waves. J. Phys. A, Math. Gen. 5, 2599-2619 (1982).}
	
	\bibitem{killip}{R. Killip, T. Oh, O. Pocovnicu, and M. Vi\c san, }{Global well-posedness of the
		Gross-Pitaevskii and cubic-quintic nonlinear Schr\"{o}dinger equations with nonvanishing
			boundary conditions, }{Math. Res. Lett. 19 (2012), 969-986.}
	
	\bibitem{k1}{Y.S. Kivshar and B. Luther-Davies, }{Dark optical solitons: physics and applications, }{Phys.
		Rep., 298:81-197, 1998.}
	
	\bibitem{k2}{Y.S. Kivshar, D.E. Pelinovsky, and Y.A. Stepanyants,}{ Self-focusing of plane dark solitons
		in nonlinear defocusing media, }{Phys. Rev. E, 51(5):5016-5026, 1995.}

	
	\bibitem{lions2}{P. L. Lions, }{ The concentration-compactness principle in the calculus of variations. The locally compact case II, }{Ann. Inst. H. Poincar\'{e} Anal. Non Lin\'{e}aire    
	  1, (1984), 223-283}
	\bibitem{lwy}{T. Lin, J. Wei and J. Yang}, {Vortex rings for the GrossÐ-Pitaevskii equation in $R^3$, }{J. Math. Pures Appl. 100 (2013) 69Ð112}
	
	\bibitem{lw}{Y. Liu and J. Wei}	{Multi-vortex traveling waves for the Gross-Pitaevskii equation and the Adler-Moser polynomials},  	arXiv:1804.09875 (2018)
	
	\bibitem{Ma}M. Mari\c s, {Traveling waves for nonlinear Schr\"odinger equations with nonzero conditions at infinity}, 
	Ann. of Math. (2) 178 (2013), no. 1, 107Ð-182.
	
	\bibitem{Maris-SIAM}M. Mari\c s, {Nonexistence of supersonic traveling waves for nonlinear Schrodinger
		equations with nonzero conditions at infinity, }{SIAM J. Math. Anal. 40 (2008),
		1076-1103.}

	\bibitem{pita}{L.P. Pitaevskii, }{Vortex lines in an imperfect Bose gas. Sov. Phys. JETP, 13(2): 451-454,
		1961.}
	
	\bibitem{SM}{C. L. Siegel and J. K. Moser, }{Celestial Mechanics, }{Springer, 1971.}
	
	\bibitem{struwe} {M. Struwe, } {The existence of surfaces of constant mean curvature with free boundaries, } {Acta Math. 160 (1988), no. 1-2, 19-64. }
	
	
	\bibitem{willem}{M. Willem, }{Minimax theorems, }{Progress in Nonlinear Differential Equations and their Applications, 24. Birkhäuser Boston, Inc., Boston, MA, 1996.} 
	
	\bibitem{Z}{P.E. Zhidkov, }{The Cauchy problem for the nonlinear Schr\"odinger equation, } {Joint Inst. Nucl. Res., Dubna (1987), 15 pp.}
	
	
\end{thebibliography}
\end{document}